\numberwithin{equation}{section}
\definecolor{darkgreen}{rgb}{0,0.45,0} 
  \newtheorem{proposition}{Proposition}[section]
  \newtheorem{lemma}[proposition]{Lemma}
  \newtheorem{corollary}[proposition]{Corollary}
  \newtheorem{theorem}[proposition]{Theorem}
  \theoremstyle{definition}
  \newtheorem{definition}[proposition]{Definition}
  \newtheorem{example}[proposition]{Example}
\theoremstyle{remark}
  \newtheorem{remark}[proposition]{Remark}
  \newcounter{c}
  \renewcommand{\[}{\setcounter{c}{1}$$}
  \newcommand{\etyk}[1]{\vspace{-7.4mm}$$\begin{equation}\Label{#1}
  \addtocounter{c}{1}}
  \renewcommand{\]}{\ifnum \value{c}=1 $$\else \end{equation}\fi}
\newcommand*{\inlineequation}[2][]{%
  \begingroup
    \refstepcounter{equation}%
    \ifx\\#1\\%
    \else
      \label{#1}%
    \fi
    \relpenalty=10000 %
    \binoppenalty=10000 %
    \ensuremath{%
      #2%
    }%
    ~\@eqnnum
  \endgroup
}
\def\@settitle{\begin{center}%
  \baselineskip14\p@\relax
  \bfseries
  \uppercasenonmath\@title
  \@title
  \ifx\@subtitle\@empty\else
     \\[1ex]\uppercasenonmath\@subtitle
     \footnotesize\mdseries\@subtitle
  \fi
  \end{center}%
}
\def\subtitle#1{\gdef\@subtitle{#1}}
\def\@subtitle{}
\newenvironment{amssidewaysfigure}
  {\begin{sidewaysfigure}\vspace*{.5\textwidth}\begin{minipage}{\textheight}\centering}
  {\end{minipage}\end{sidewaysfigure}}
\newcommand{\updot}{\raisebox{3pt}{$\cdot$}}
\newcommand{\vop}{{\raisebox{-3pt}{\hspace{1pt}\rotatebox{90}{$^\mathsf{op}$}\!\!}}}
\newcommand{\verti}{{\scalebox{.45}{$\mathbf \vert$}}}
\newcommand{\hori}{{\!-\!}}
\newcommand{\Longdownarrow}[1]{\mbox{\rotatebox{270}{$\Longrightarrow$} \raisebox{-12pt}{$#1$}}}
\newcommand{\Mnd}{\mathbb M\mathsf{nd}}
\newcommand{\Sqr}{\mathbb S\mathsf{qr}}
\begin{document}

\title[Multimonoidal monads]{The formal theory of multimonoidal monads}

\author{Gabriella B\"ohm} 
\address{Wigner Research Centre for Physics, H-1525 Budapest 114,
P.O.B.\ 49, Hungary}
\email{bohm.gabriella@wigner.mta.hu}
\date{April 2019}
 
\begin{abstract}
Certain aspects of Street's formal theory of monads in 2-categories are extended to multimonoidal monads in symmetric strict monoidal 2-categories.
Namely, any symmetric strict monoidal 2-category $\mathcal M$ admits a symmetric strict monoidal 2-category of pseudomonoids, monoidal 1-cells and monoidal 2-cells in $\mathcal M$.
Dually, there is a symmetric strict monoidal 2-category of pseudomonoids, opmonoidal 1-cells and opmonoidal 2-cells in $\mathcal M$.
Extending a construction due to Aguiar and Mahajan for $\mathcal M=\mathsf{Cat}$, we may apply the first construction $p$-times and the second one $q$-times (in any order). It yields a 2-category $\mathcal M_{pq}$.
A 0-cell therein is an object $A$ of $\mathcal M$ together with $p+q$ compatible pseudomonoid structures; it is termed a $(p+q)$-oidal object in $\mathcal M$. A monad in $\mathcal M_{pq}$ is called a $(p,q)$-oidal monad in $\mathcal M$; it is a monad $t$ on $A$ in $\mathcal M$ together with $p$ monoidal, and $q$ opmonoidal structures in a compatible way. 
If $\mathcal M$ has monoidal Eilenberg-Moore construction, and certain (Linton type) stable coequalizers exist, then a $(p+q)$-oidal structure on the Eilenberg-Moore object $A^t$ of a $(p,q)$-oidal monad $(A,t)$  is shown to arise via a symmetric strict monoidal double functor to Ehresmann's double category $\Sqr (\mathcal M)$ of squares in $\mathcal M$, from the double category of monads in $\Sqr (\mathcal M)$ in the sense of Fiore, Gambino and Kock. While $q$ ones of the pseudomonoid structures of $A^t$ are lifted along the `forgetful' 1-cell $A^t \to A$, the other $p$ ones are lifted along its left adjoint. 
In the particular example when $\mathcal M$ is an appropriate 2-subcategory of $\mathsf{Cat}$, this yields a conceptually different proof of some recent results due to Aguiar, Haim and L\'opez Franco.
\end{abstract}
  
\maketitle


\section*{Introduction} \label{sec:intro}

Classically, a {\em monad} on a category $A$ is a monoid in the category of endofunctors on $A$; that is,  a functor $t:A\to A$ together with natural transformations from the twofold iterate $t.t$ and from the identity functor, respectively, to $t$, regarded as an associative multiplication with a unit. A popular example is the monad $T\otimes -$ induced by an associative unital algebra $T$ on the category of vector spaces.

Any adjunction $l\dashv r:B \to A$ induces a monad $r.l$ on $A$, with unit provided by the unit of the  adjunction, and multiplication induced by the counit. Conversely, any monad is induced by some adjunction in this sense. There is no unique adjunction in general, but a terminal one can easily be described. An {\em Eilenberg-Moore algebra} of a monad $t$ on a category $A$ consists of an object $X$ of $A$ and an associative and unital action $tX \to X$. A morphism of Eilenberg-Moore algebras is a morphism in $A$ which commutes with the actions. (For the monad $T\otimes -$ induced by an algebra $T$ on the category of vector spaces, these are just left $T$-modules and their morphisms.) The evident forgetful functor $u^t$ (forgetting the actions) from the so defined {\em Eilenberg-Moore category} $A^t$ to $A$ has a left adjoint (sending an object $X$ to $tX$ with action provided by the multiplication of the monad) and this adjunction induces the monad $t$. Moreover, any other adjunction $l\dashv r:B \to A$ inducing the same monad factorizes through a unique functor $B \to A^t$. 

Having functors $f:A\to B$ between the base categories of some respective monads $t$ and $s$, and natural transformations between them, it is often a relevant question if they lift to the Eilenberg-Moore categories in the sense of the commutative diagram
\begin{equation} \label{eq:EMlift}\tag{EM}
\xymatrix{
A^t \ar[d]_-{u^t} \ar@{-->}[r] &
B^s \ar[d]^-{u^s} \\
A \ar[r]_-f &
B}
\end{equation}
where $u^t$ and $u^s$ are the respective forgetful functors.
It is not hard to see that such liftings of a functor $f$ correspond bijectively to natural transformations $\varphi:s.f\to f.t$ which are compatible with both monad structures. Such a pair $(f,\varphi)$ is called a {\em monad functor}. A natural transformation between functors $A\to B$ has at most one lifting to a natural transformation between the lifted functors, and the condition for its existence is a compatibility with the natural transformations $\varphi$. A natural transformation satisfying this compatibility condition is termed a {\em monad transformation}.

The {\em formal theory of monads} \cite{Street:FTM} due to Ross Street provides a wide generalization of the above picture and gives it a conceptual interpretation. The 2-category $\mathsf{Cat}$ of categories, functors and natural transformations is replaced by an arbitrary 2-category $\mathcal M$. A {\em monad} in $\mathcal M$ on a 0-cell $A$ is defined as a monoid in the hom category $\mathcal M(A,A)$.
Monads are the 0-cells in a 2-category $\mathsf{Mnd}(\mathcal M)$ whose 1-cells and 2-cells are the analogues of monad functors, and monad transformations, respectively.
Regarding any 0-cell of $\mathcal M$ as a trivial monad (with identity 1, and 2-cell parts), regarding any 1-cell of $\mathcal M$ as their monad morphism (with identity 2-cell part), and regarding any 2-cell of $\mathcal M$ as a monad transformation, there is an inclusion 2-functor $\mathcal M \to \mathsf{Mnd}(\mathcal M)$. For $\mathcal M =\mathsf {Cat}$ it has a right 2-adjoint. The right 2-adjoint sends a monad to its Eilenberg-Moore category, a monad functor to the corresponding lifted functor, and a monad transformation to the lifted natural transformation. When for some 2-category $\mathcal M$ the above inclusion 2-functor $\mathcal M \to \mathsf{Mnd}(\mathcal M)$ possesses a right 2-adjoint, $\mathcal M$ is said to {\em admit Eilenberg-Moore construction}. By the theory worked out in \cite{Street:FTM}, the right adjoint describes an analogous lifting theory. (More will be said in Section \ref{sec:q-EM}.)

For a monad $t$ on a {\em monoidal category} $A$, in \cite{Moerdijk} the additional structure was described which is equivalent to a monoidal structure on the Eilenberg-Moore category $A^t$ such that the forgetful functor $A^t\to A$ is strict monoidal.
The explicit computations of \cite{Moerdijk}  in $\mathsf{Cat}$ (considered with the Cartesian product of categories as the monoidal structure) were replaced in \cite{McCrudden}, \cite{ChikhladzeLackStreet} by abstract arguments about more general monoidal bicategories. 
Beyond a wide generalization, thereby also a conceptually different proof was obtained. Namely, the structure described in \cite {Moerdijk} was interpreted as an {\em opmonoidal monad}; that is, a monad in the 2-category $\mathsf{Cat}_{01}$ of monoidal categories, opmonoidal functors and opmonoidal natural transformations. 
Now  $\mathsf{Cat}_{01}$ admits Eilenberg-Moore construction in the sense of \cite{Street:FTM}, see \cite{Lack} and \cite{Zawadowski} for conceptually different proofs. Hence there is a 2-functor from Street's 2-category of monads $\mathsf{Mnd}(\mathsf{Cat}_{01})$ above to $\mathsf{Cat}_{01}$ whose object map sends an opmonoidal monad to its monoidal Eilenberg-Moore category.

Recently in \cite{AguiarHaimLopezFranco} a similar analysis to that of \cite{Moerdijk} was carried out for {\em multimonoidal monads} on {\em multimonoidal categories}. (In \cite{AguiarHaimLopezFranco} the term {\em higher monoidal} was used. However, we prefer to call the same thing {\em multimonoidal} and reserve the term {\em higher} to be used only for dimensionality of categorical structures.)
The aim of this paper is to extend the results of \cite{AguiarHaimLopezFranco} to multimonoidal monads in {\em symmetric strict monoidal 2-categories} and, more importantly, derive them from a suitable `formal theory'. With this we not only gain a deeper understanding of the origin of the formulae in \cite{AguiarHaimLopezFranco} but, as a byproduct, also open a way to some practical applications. For example, we obtain sufficient conditions under which the 2-category of pseudomonoids, monoidal 1-cells and monoidal 2-cells in a strict monoidal 2-category admits Eilenberg--Moore construction.

The development of this formal theory will require a move away from 2-categories to {\em symmetric strict monoidal double categories}.
By a {\em strict monoidal 2-category} we mean a monoid in the category of 2-categories and 2-functors considered with the Cartesian product of 2-categories as the monoidal product. This definition occurred e.g. on page 69 of \cite{Jay} where also the notion of {\em symmetry} was introduced as a suitable 2-natural isomorphism between the monoidal product 2-functor and its reversed mate.
Similarly restrictively, we adopt the definition of symmetric strict monoidal double category in \cite{BruniMeseguerMontanari}. Here again, a {\em strict monoidal double category} means a monoid in the category of double categories and double functors considered with the Cartesian product of double categories as the monoidal product. A {\em symmetry} in \cite{BruniMeseguerMontanari} can be interpreted then as a  suitable double natural isomorphism between the monoidal product double functor and its reversed mate.
(Note that in order for Ehresmann's square, or quintet construction \cite{Ehresmann} to yield a symmetric strict monoidal double category in this sense, we need to apply it to a symmetric strict monoidal 2-category.)

The notions recalled in the previous paragraph are very restrictive (by being so strict). One can ask about various levels of generalization whether they are possible.
Instead of monoids, one may consider pseudomonoids in the 2-category of 2-categories, 2-functors and 2-natural transformations; and correspondingly, pseudomonoids in the 2-category of double categories, double  functors and double natural transformations --- considered in both cases with the monoidal structure provided by the Cartesian product.
Although we expect that it should be possible, it is not motivated by our examples. Also, the technical complexity resulting from the tedious checking of all coherence conditions could divert attention from the key ideas.
It looks more challenging to extend our considerations to monoidal bicategories in the sense of 
\cite{KapranovVoevodsky,BaezNeuchl,SchommerPries} --- or at least to their semistrict version known as {\em Gray monoids} \cite{GordonPowerStreet,DayStreet}. These are monoids in the category of 2-categories and 2-functors considered with  the monoidal structure provided by the {\em Gray tensor product} \cite{Gray}. 
Since the corresponding Gray tensor product of double categories seems not yet available in the literature, this problem does not look to be within reach. We plan to address it elsewhere \cite{Bohm:Gray}.

For any symmetric strict monoidal 2-category $\mathcal M$, there is a symmetric strict monoidal 2-category $\mathcal M_{10}$ whose 0-cells are the pseudomonoids (also called {\em monoidal objects} e.g. in \cite{Zawadowski} or {\em monoidales} e.g. in \cite{ChikhladzeLackStreet}), the 1-cells are the monoidal 1-cells, and the 2-cells are the monoidal 2-cells in $\mathcal M$. Symmetrically, there is a symmetric strict monoidal 2-category $\mathcal M_{01}$ whose 0-cells are again  the pseudomonoids, but the 1-cells are the opmonoidal 1-cells, and the 2-cells are the opmonoidal 2-cells in $\mathcal M$. Moreover, these constructions commute with each other (see Section \ref{sec:Mpq}). Thus applying $p$ times the first construction and $q$ times the second one (in an arbitrary order), we get a 2-category $\mathcal M_{pq}$. For a fixed non-negative integer $n$ and every  $0\leq p \leq n$, the 0-cells of $\mathcal M_{p,n-p}$ are the same gadgets. They consist of a 0-cell of $\mathcal M$ together with $n$ pseudomonoid structures and compatibility morphisms between them (constituting suitable monoidal structures on the structure morphisms of the pseudomonoids). We call a 0-cell of  $\mathcal M_{p,n-p}$ an {\em $n$-oidal object} of $\mathcal M$. (In the particular case of $\mathcal M=\mathsf{Cat}$, in \cite{AguiarHaimLopezFranco,AguiarMahajan} it was called an {\em $n$-monoidal} category.) A $1$-oidal object is a pseudomonoid, in particular a monoidal category for $\mathcal M=\mathsf{Cat}$. So we re-obtain the classical terminology if ``1" is pronounced as ``mono". A 2-oidal object of $\mathsf{Cat}$ is a duoidal category (in the sense of \cite{Street:Belgian}, termed {\em 2-monoidal} in \cite{AguiarMahajan}). Again we agree with the established terminology if ``2" is pronounced as ``duo".
A 1-cell in $\mathcal M_{pq}$ consists of a 1-cell of $\mathcal M$ together with monoidal structures with respect to $p$ ones of the pseudomonoid structures; and opmonoidal structures with respect to the remaining $q$ ones of the pseudomonoid structures of the domain and the codomain. They are subject to suitable compatibility conditions. We term a 1-cell of $\mathcal M_{pq}$ a {\em $(p,q)$-oidal 1-cell} in $\mathcal M$ (rather than {\em $(p,q)$-monoidal} as in \cite{AguiarHaimLopezFranco,AguiarMahajan}; where in particular a $(2,0)$-oidal 1-cell of $\mathsf{Cat}$ was called a {\em double monoidal functor}, a (0,2)-oidal 1-cell was called {\em double comonoidal} and a (1,1)-oidal 1-cell was called {\em bimonoidal}).
A 2-cell in $\mathcal M_{pq}$ --- called a {\em $(p,q)$-oidal 2-cell} in $\mathcal M$ --- is a 2-cell in $\mathcal M$ which is compatible with all of the (op)monoidal structures of the domain and the codomain.
A monad in $\mathcal M_{pq}$ is termed a {\em $(p,q)$-oidal monad}.

As recalled above from \cite{McCrudden}, the monoidal structure of the base category $A$ of any opmonoidal (or $(0,1)$-oidal) monad $t$ in $\mathsf{Cat}$ lifts to the Eilenberg-Moore category $A^t$ along the forgetful functor $u^t:A^t\to A$. (This means that the functors and natural transformations constituting the monoidal structures of $A$ and $A^t$  fit in commutative diagrams as in \eqref{eq:EMlift}).
Furthermore, if reflexive coequalizers exist in $A$ and they are preserved by the monoidal product of $A$ and by the functor $t$, then also the  monoidal structure of the base category $A$ of a monoidal (or $(1,0)$-oidal) monad $t$ lifts to the Eilenberg-Moore category $A^t$. However, at this time it is a lifting along the left adjoint $f^t$ of the forgetful functor $u^t:A^t \to A$.  This means that the functors and natural transformations constituting the monoidal structures of $A$ and $A^t$ fit in commutative diagrams obtained from that in \eqref{eq:EMlift} replacing the forgetful functors with their left adjoints in the opposite direction. This is a result of \cite{Seal}; see also \cite{AguiarHaimLopezFranco}.
In \cite{AguiarHaimLopezFranco} it was proven, moreover, that under the same assumptions also the $(p+q)$-oidal structure of the base category $A$ of any $(p,q)$-oidal monad $t$ in $\mathsf{Cat}$ lifts to the Eilenberg-Moore category $A^t$. This is a lifting of mixed kind, though. While $q$ ones of the monoidal structures are lifted along the forgetful functor $u^t:A^t\to A$; the remaining $p$ ones are lifted along its left adjoint $f^t$. 

Because of this mixed nature of lifting; that is, since the different ingredients are lifted along different functors $u^t$ and $f^t$, we do not expect it to be described by some 2-functor (as in the situations of \cite{Street:FTM} and \cite{ChikhladzeLackStreet}). 
Instead, in this paper we deal with symmetric strict monoidal double categories and define their  $(p,q)$-oidal objects (see Section \ref{sec:Dpq}). These $(p,q)$-oidal objects are shown to be preserved by symmetric strict monoidal double functors.
In Ehresmann's double category $\Sqr (\mathcal M)$ of squares (or quintets \cite{Ehresmann}) in a symmetric strict monoidal 2-category $\mathcal M$, the $(p,q)$-oidal objects are the same as the $(p+q)$-oidal objects in $\mathcal M$.

Taking the double category of monads \cite{FioreGambinoKock} in the particular double category $\Sqr (\mathcal M)$, we obtain a symmetric strict monoidal double category which we denote by $\Mnd (\mathcal M)$. Its horizontal 2-category is $\mathsf{Mnd}(\mathcal M)$ and its vertical 2-category is $\mathsf{Mnd}(\mathcal M_{\mathsf{op}})_{\mathsf{op}}$ (where $(-)_{\mathsf{op}}$ refers to the horizontally opposite 2-category, see \cite[Section 4]{Street:FTM}). The $(p,q)$-oidal objects in $\Mnd (\mathcal M)$ are identified with the $(p,q)$-oidal monads in $\mathcal M$ (see Section \ref{sec:pq-monad}). Consequently, any symmetric strict monoidal double functor $\Mnd (\mathcal M) \to \Sqr (\mathcal M)$ takes $(p,q)$-oidal monads in $\mathcal M$ to $(p+q)$-oidal objects in $\mathcal M$.

Whenever a symmetric strict monoidal 2-category $\mathcal M$ admits monoidal Eilen\-berg-Moore construction (in the sense that the comparison 1-cells $I^1\to I$ and $A^tB^s \to (AB)^{ts}$ are identities for the identity monad on the monoidal unit $I$ and all monads $t$ on $A$ and $s$ on $B$, see Definition \ref{def:mon-EM}) and furthermore some Linton type coequalizers (see \eqref{eq:Linton}) exist and are preserved by the horizontal composition, we construct a symmetric strict monoidal double functor $\Mnd (\mathcal M) \to \Sqr (\mathcal M)$ (see Section \ref{sec:K}). In the terminology of \cite{FioreGambinoKock:free} it provides Eilenberg--Moore construction for the double category $\Sqr (\mathcal M)$. Its horizontal 2-functor is the Eilenberg-Moore 2-functor $\mathsf{Mnd}(\mathcal M) \to \mathcal M$ and its vertical 2-functor $\mathsf{Mnd}(\mathcal M_{\mathsf{op}})_{\mathsf{op}}\to \mathcal M$ is obtained via Linton type coequalizers. 
Its taking $(p,q)$-oidal monads in $\mathcal M$ to their $(p+q)$-oidal Eilenberg-Moore objects provides a `formal theory' in the background of the liftings in \cite{AguiarHaimLopezFranco}.

\subsection*{Acknowledgement} \ 
It is a pleasure to thank Ross Street and two anonymous referees for helpful comments, insightful questions and mentioning some relevant references.
Financial support by the Hungarian National Research, Development and Innovation Office – NKFIH (grant K124138) is gratefully acknowledged.  


\section{Multimonoidal structures in 2-categories}
\label{sec:Mpq}

We begin with a brief review of some definitions and basic constructions for later use. A more detailed introduction can be found e.g. in Chapter 7 of \cite{BorceuxI}.

A {\em 2-category} is a category enriched in the category of small categories and functors considered with the Cartesian product of categories as the monoidal product.
The explicit definition can be found in \cite[Definition 7.1.1]{BorceuxI}.
Throughout, horizontal composition in a 2-category (i.e. the composition functor) will be denoted by a lower dot $.$ and vertical composition (that is, the compositions of the hom categories) will be denoted by an upper dot $\updot$. All identity (1- or 2-) cells will be denoted by $1$.

\begin{example} \label{ex:Cat}
The most well-known 2-category is, perhaps, $\mathsf{Cat}$ \cite[Example 7.1.4.a]{BorceuxI}. 
Its 0-cells (or objects) are small categories, the 1-cells (i.e. objects of the hom categories) are the functors and the 2-cells (i.e. morphisms of the hom categories) are the natural transformations. 
\end{example}

For any 2-category $\mathcal M$ we denote the {\em horizontally opposite} 2-category in \cite[Section 4]{Street:FTM} by $\mathcal M_{\mathsf{op}}$ and we denote the {\em vertically opposite} 2-category in \cite[Section 4]{Street:FTM}  by $\mathcal M^\vop$. 

A {\em 2-functor} is a functor enriched in the category of small categories and functors considered with the Cartesian product of categories as the monoidal product. The explicit definition can be found in \cite[Definition 7.2.1]{BorceuxI}. Any 2-functor $\mathsf F:\mathcal M \to \mathcal N$ can be seen as a 2-functor $\mathsf F_{\mathsf{op}}:\mathcal M_{\mathsf{op}} \to \mathcal N_{\mathsf{op}}$ and also as a 2-functor $\mathsf F^\vop:\mathcal M^\vop \to \mathcal N^{\,\vop}$.

A {\em 2-natural transformation} is a natural transformation enriched in the category of small categories and functors considered with the Cartesian product of categories as the monoidal product.
The explicit definition can be found in \cite[Definition 7.2.2]{BorceuxI}. Any 2-natural transformation $\Theta:\mathsf F \to \mathsf G$ can be seen as a 2-natural transformation $\Theta_{\mathsf{op}}:\mathsf F_{\mathsf{op}} \to \mathsf G_{\mathsf{op}}$ and also as a 2-natural transformation $\Theta^\vop:\mathsf F^\vop \to \mathsf G^\vop$.

\begin{example}
Small 2-categories are the 0-cells, 2-functors are the 1-cells, and 2-natural transformations are the 2-cells of a 2-category $\mathsf{2Cat}$, see \cite[Proposition 7.2.3]{BorceuxI}. 
\end{example}

\begin{definition} \cite[page 69]{Jay}
A {\em strict monoidal 2-category} is a monoid in the category whose objects are the 2-categories, whose morphisms are the 2-functors and whose monoidal product is the Cartesian product of 2-categories. Explicitly, a strict monoidal 2-category consists of the following data.
\begin{itemize}
\item A 2-category $\mathcal M$.
\item A 2-functor $I$ from the singleton 2-category to $\mathcal M$. The image of the only object of the singleton category under it is called the {\em monoidal unit} and it is denoted by the same symbol $I$.
\item A 2-functor $\otimes: \mathcal M \times \mathcal M\to \mathcal M$ called the {\em monoidal product}. It must be strictly associative with the strict unit $I$. The action of the 2-functor $\otimes$ on (0-, 1- and 2-) cells will be denoted by juxtaposition.
\end{itemize}
\end{definition}

The 2-category $\mathsf{Cat}$ of Example \ref{ex:Cat} is strict monoidal via the Cartesian product of categories.

\begin{definition}
A {\em strict monoidal 2-functor} is a monoid morphism in the category whose objects are the 2-categories, whose morphisms are the 2-functors and whose monoidal product is the Cartesian product of 2-categories. Explicitly, a 2-functor $\mathsf F:\mathcal M \to \mathcal M'$ is strict monoidal whenever $\mathsf FI=I'$ and $\mathsf F.\otimes=\otimes' .(\mathsf  F \times \mathsf  F)$.
\end{definition}

\begin{definition}
A 2-natural transformation $\Theta: \mathsf F \to \mathsf G$ between strict monoidal 2-functors is said to be {\em monoidal} if the following diagrams of 2-natural transformations commute. 
$$
\xymatrix{
\otimes.(\mathsf F \times \mathsf F) \ar@{=}[r] \ar[d]_-{1.(\Theta \times \Theta)} &
\mathsf F.\otimes \ar[d]^-{\Theta.1} \\
\otimes.(\mathsf G \times \mathsf G) \ar@{=}[r]  &
\mathsf G.\otimes} \qquad 
\xymatrix@R=28pt{
I \ar@{=}[r] \ar@{=}[d] &
\mathsf F.I \ar[d]^-{\Theta.1} \\
I \ar@{=}[r]  &
\mathsf G.I}
$$
That is, for all objects $A$ and $B$, $\Theta_{AB}=\Theta_A \Theta_B$ and $\Theta_I=1$.
\end{definition}

For any 2-category $\mathcal M$, there is a 2-functor $\mathsf{flip}:\mathcal M \times \mathcal M \to \mathcal M \times \mathcal M$, sending a pair of 2-cells $(\omega,\vartheta)$ to $(\vartheta,\omega)$. It occurs in the following definition.

\begin{definition} \cite[Page 69]{Jay}
A {\em symmetric strict monoidal 2-category} consists of a strict monoidal 2-category $(\mathcal M,\otimes, I)$ together with a 2-natural transformation $\sigma:\otimes \to \otimes.\mathsf{flip}$ rendering commutative the following diagrams for all objects $A$, $B$ and $C$.
$$
\xymatrix{
AB \ar[r]^-\sigma \ar@{=}[rd] &
BA \ar[d]^-\sigma \\
&AB} \qquad \qquad
\xymatrix{
ABC \ar[r]^-{\sigma 1} \ar[rd]_-{\sigma_{12}} &
BAC \ar[d]^-{1\sigma} \\
&BCA}
$$
The subscripts $p$ and $q$ of $\sigma_{pq}$ indicate that it is a morphism $A_1 \dots  A_p B_1\dots  B_q \to B_1\dots  B_q $
$ A_1 \dots A_p$. 
\end{definition}

\begin{definition}
A strict monoidal 2-functor $\mathsf F$ between symmetric strict monoidal 2-categories is said to be {\em symmetric} if 
$\mathsf F (\xymatrix@C=13pt{AB\ar[r]^-\sigma &BA})=
\xymatrix@C=13pt{(\mathsf F A) (\mathsf F B)\ar[r]^-{\sigma'} & (\mathsf F B)(\mathsf F A)}$ for all objects $A$ and $B$ of the domain 2-category of $\mathsf F$.
\end{definition}

\begin{example} 
Symmetric strict monoidal small 2-categories are the 0-cells, symmetric strict monoidal 2-functors are the 1-cells and monoidal 2-natural transformations are the 2-cells of a 2-category $\mathsf{sm}\mbox{-}\mathsf{2Cat}$. Both vertical and horizontal compositions are given by the same formulae as in $\mathsf{2Cat}$ of \cite[Proposition 7.2.3]{BorceuxI}.
\end{example}

\begin{example} \label{ex:Mnd}
Below we describe a 2-functor $\mathsf{Mnd}:\mathsf{sm}\mbox{-}\mathsf{2Cat} \to \mathsf{sm}\mbox{-}\mathsf{2Cat}$.

For any 2-category $\mathcal M$, there is a 2-category $\mathsf{Mnd}(\mathcal M)$ as in \cite{Street:FTM}. Its 0-cells are the {\em monads} in $\mathcal M$. That is, quadruples consisting of a 0-cell $A$ of $\mathcal M$, a 1-cell $t:A \to A$ and 2-cells $\mu:t.t\to t$ and $\eta:1\to t$. They are required to satisfy the associativity condition $\mu \updot (\mu.1)=\mu\updot (1.\mu)$ and the unitality conditions $\mu \updot (\eta.1)=1=\mu\updot (1.\eta)$.
A 1-cell in $\mathsf{Mnd}(\mathcal M)$ from $(A,t,\mu,\eta)$ to $(A',t',\mu',\eta')$ is a pair consisting of a 1-cell $f:A\to A'$ in $\mathcal M$ and a 2-cell $\varphi:t'.f \to f.t$ satisfying the multiplicativity condition $(1.\mu)\updot (\varphi.1)\updot (1.\varphi)=\varphi\updot (\mu'.1)$ and the unitality condition $1.\eta=\varphi\updot(\eta'.1)$. Such a 1-cell is called a {\em monad morphism} in $\mathcal M$. The 2-cells $(f,\varphi)\to (g,\gamma)$ in $\mathsf{Mnd}(\mathcal M)$ are those 2-cells $\omega:f\to g$ in $\mathcal M$ for which $(\omega.1)\updot\varphi=\gamma\updot (1.\omega)$. They are called {\em monad transformations} in $\mathcal M$. The vertical composite of monad transformations is their vertical composite as 2-cells in $\mathcal M$. The horizontal composite of composable 1-cells $(f,\varphi)$ and $(g,\gamma)$ is $(g.f, (1.\varphi)\updot(\gamma.1))$. The horizontal composite of monad transformations is their horizontal composite as 2-cells in $\mathcal M$. Whenever $\mathcal M$ is equipped with a strict monoidal structure $(\otimes, I)$, there is an induced strict monoidal structure on $\mathsf{Mnd}(\mathcal M)$.
The monoidal unit is $(I,1,1,1)$, the identity monad on $I$. 
The monoidal product of monads $(A,t,\mu,\eta)$ and  $(A',t',\mu',\eta')$ is 
$$
(AA', 
\xymatrix@=16pt{AA' \ar[r]^-{tt'} & AA'}, 
\xymatrix@C=16pt{tt'.tt'=(t.t)(t.'t') \ar[r]^-{\mu\mu'} & tt'}, 
\xymatrix@C=16pt{I=II \ar[r]^-{\eta\eta'} & tt'}).
$$
The monoidal product of monad morphisms $(f,\varphi):(A,t)\to (B,s)$ and  $(f',\varphi'):(A',t')\to (B',s')$ is the monad morphism
$$
(\xymatrix@=16pt{AA' \ar[r]^-{ff'} & BB'}, 
\xymatrix@C=16pt{ss'.ff'=(s.f)(s.'f') \ar[r]^-{\varphi \varphi'} & (f.t)(f'.t')=ff'.tt'}).
$$ 
The monoidal product of monad transformations is their monoidal product as 2-cells of $\mathcal M$. If in addition a strict monoidal 2-category $(\mathcal M, \otimes, I)$ is equipped with a symmetry $\sigma$, then the induced symmetry on $\mathsf{Mnd}(\mathcal M)$ has the components $(\sigma, 1)$. So far we recalled the 0-cell part of the desired 2-functor $\mathsf{Mnd}$. 

For any 2-functor $\mathsf F:\mathcal M \to \mathcal N$ there is a 2-functor $\mathsf{Mnd}(\mathsf F):\mathsf{Mnd}(\mathcal M) \to \mathsf{Mnd}(\mathcal N)$. It sends a monad $(A,t,\mu,\eta)$ to 
$$
(FA,
\xymatrix@=16pt{\mathsf F A \ar[r]^-{\mathsf F t} & \mathsf F A}, 
\xymatrix@C=16pt{(\mathsf F t).(\mathsf F t)=\mathsf F(t.t) \ar[r]^-{\mathsf F\mu} & \mathsf F t}, 
\xymatrix@C=16pt{I=\mathsf F I \ar[r]^-{\mathsf F\eta} & \mathsf F t});
$$
it sends a monad morphism $(f,\varphi):(A,t)\to (B,s)$ to
$$
(\xymatrix@=16pt{\mathsf F A \ar[r]^-{\mathsf F f} & \mathsf F B}, 
\xymatrix@C=16pt{(\mathsf F s).(\mathsf F f)=\mathsf F(s.f) \ar[r]^-{\mathsf F \varphi } & 
\mathsf F(f.t)=(\mathsf F f).(\mathsf F t)}
$$ 
and it sends a monad transformation $\omega$ to $\mathsf F \omega$. Whenever $\mathsf F$ is strict monoidal then so is $\mathsf{Mnd}(\mathsf F)$ and if in addition $\mathsf F$ is symmetric then so is $\mathsf{Mnd}(\mathsf F)$. This yields the 1-cell part of the desired 2-functor $\mathsf{Mnd}$. 

For any 2-natural transformation $\Theta$ there is a 2-natural transformation $\mathsf{Mnd}(\Theta)$ with components $(\Theta,1)$. It is strict monoidal whenever $\Theta$ is so. This finishes the description of the stated 2-functor $\mathsf{Mnd}$. (It would be straightforward to extend it also to modifications but it is not needed for the purposes of the present paper.)

Symmetrically to the above considerations, we introduce another 2-functor 
$\mathsf{Mnd}_{\mathsf{op}}:=$
$\mathsf{Mnd}((-)_{\mathsf{op}})_{\mathsf{op}}:
\mathsf{sm}\mbox{-}\mathsf{2Cat} \to \mathsf{sm}\mbox{-}\mathsf{2Cat}$.
\end{example}

\begin{example} \label{ex:Mpq}
(1) Below a 2-functor $(-)_{01}:\mathsf{sm}\mbox{-}\mathsf{2Cat} \to \mathsf{sm}\mbox{-}\mathsf{2Cat}$ is described.

For any strict monoidal 2-category $(\mathcal M,\otimes, I)$, there is a 2-category $\mathcal M_{01}$. Its 0-cells are the {\em pseudomonoids} in $\mathcal M$, also called {\em monoidal objects} or {\em monoidales} by other authors, see \cite{Zawadowski} and \cite{ChikhladzeLackStreet}, respectively. A pseudomonoid in $\mathcal M$ consists of a 0-cell $A$, 1-cells $m:AA\to A$ and $u:I\to A$ together with invertible 2-cells $\alpha:m.m1\to m.1m$, $\lambda:m.u1\to 1$ and $\varrho:m.1u\to 1$ satisfying MacLane's pentagon and triangle conditions
$$
\xymatrix{
m.m1.m11\ar[r]^-{\alpha.1} \ar[d]_-{1.\alpha 1} &
m.1m.m11\ar@{=}[r] &
m.m1.11m \ar[d]^-{\alpha.1} \\
m.m1.1m1 \ar[r]_-{\alpha.1} &
m.1m.1m1 \ar[r]_-{1.1\alpha} &
m.1m.11m} \qquad
\xymatrix{
m.m1.1u1 \ar[r]^-{\alpha.1} \ar[rd]_-{1.\varrho 1} &
m.1m.1u1 \ar[d]^-{1.1\lambda} \\
& \, m.}
$$
A 1-cell in $\mathcal M_{01}$ is a so-called {\em opmonoidal 1-cell}. An opmonoidal 1-cell from $(A,m,u,\alpha,$ $\lambda,\varrho)$ to  $(A',m',u',\alpha',\lambda',\varrho')$ consists of a 1-cell $f:A \to A'$ in $\mathcal M$ together with 2-cells $\varphi^2:f.m\to m'.ff$ (called the {\em binary part}) and $\varphi^0:f.u \to u'$ (called the {\em nullary part}) subject to the coassociativity and counitality conditions
$$
\xymatrix{
f.m.m1 \ar[r]^-{1.\alpha} \ar[d]_-{\varphi^2.1} &
f.m.1m \ar[d]^-{\varphi^2.1} \\
m'.ff.m1 \ar[d]_-{1.\varphi^21} &
m'.ff.1m \ar[d]^-{1.1\varphi^2} \\
m'.m'1.fff \ar[r]_-{\alpha'.1 } &
m'.1m'.fff} \qquad
\xymatrix{
f.m.u1 \ar[r]^-{1.\lambda} \ar[d]_-{\varphi^2.1} &
f \ar@{=}[dd] &
f.m.1u \ar[l]_-{1.\varrho}  \ar[d]^-{\varphi^2.1} \\
m'.ff.u1 \ar[d]_-{1.\varphi^01} &&
m'.ff.1u \ar[d]^-{1.1\varphi^0} \\
m'.u'1.f \ar[r]_-{\lambda'.1 } &
f &
\ m'.1u'.f\ . \ar[l]^-{\varrho'.1 } }
$$
The 2-cells $\omega:(f,\varphi^2,\varphi^0)\to (g,\gamma^2,\gamma^0)$ in $\mathcal M_{01}$ are the {\em opmonoidal 2-cells} in $\mathcal M$; that is, those 2-cells $\omega:f\to g$ in $\mathcal M$ for which the following diagrams commute.
$$
\xymatrix{
f.m \ar[r]^-{\varphi^2} \ar[d]_-{\omega.1} &
m'.ff \ar[d]^-{1.\omega\omega} \\
g.m \ar[r]_-{\gamma^2} &
m'.gg} \qquad \qquad
\xymatrix{
f.u \ar[r]^-{\varphi^0} \ar[d]_-{\omega.1} &
u' \ar@{=}[d] \\
g.u \ar[r]_-{\gamma^0} &
u'} 
$$
The vertical composite of opmonoidal 2-cells is their vertical composite as 2-cells in $\mathcal M$. The horizontal composite of 1-cells $(f,\varphi^2,\varphi^0):(A,m,u,\alpha,\lambda,\varrho)\to (A',m',u',\alpha',\lambda',\varrho')$ and $(g,\gamma^2,\gamma^0):(A',m',u',\alpha',\lambda',\varrho')\to (A^{\prime\prime},m^{\prime\prime},u^{\prime\prime},\alpha^{\prime\prime},\lambda^{\prime\prime},\varrho^{\prime\prime})$ is
$$
(g.f, 
\xymatrix@C=16pt{
g.f.m\ar[r]^-{1.\varphi^2} & 
g.m'.ff \ar[r]^-{\gamma^2.1} &
m^{\prime\prime}.gg.ff=m^{\prime\prime}.(g.f)(g.f)},
\xymatrix@C=16pt{
g.f.u\ar[r]^-{1.\varphi^0} & 
g.u' \ar[r]^-{\gamma^0} &
u^{\prime\prime}}).
$$
The horizontal composite of opmonoidal 2-cells is their horizontal composite as 2-cells in $\mathcal M$. 

If the strict monoidal 2-category $(\mathcal M,\otimes,I)$ is equipped with a symmetry $\sigma$, then there is an induced symmetric strict monoidal structure on $\mathcal M_{01}$. The monoidal product of pseudomonoids $(A,m,u,\alpha,\lambda,\varrho)$ and $(A',m',u',\alpha',\lambda',\varrho')$ is 
\begin{eqnarray*}
&(& AA',
\xymatrix@C=20pt{
AA'AA' \ar[r]^-{1\sigma1} &
AAA'A'\ar[r]^-{mm'} &
AA'},
\xymatrix@C=20pt{I=II \ar[r]^-{uu'} &
AA'},\\
&&\xymatrix@C=25pt@R=5pt{
mm'.1\sigma1.mm'11.1\sigma 111\ar@{=}[d] &
mm'.1\sigma 1.11mm'.111\sigma 1 \ar@{=}[d] \\
mm'.m1m'1.11\sigma_{21}1.1\sigma111\ar[r]^-{\alpha\alpha'.1.1} &
mm'.1m1m'.1\sigma_{12}11.111\sigma 1,} \\
&&\xymatrix@C=16pt@R=5pt{
mm'.1\sigma1.uu'11=
mm'.u1u'1 \ar[r]^-{\lambda\lambda'} &
1}, 
\xymatrix@C=16pt@R=5pt{
mm'.1\sigma1.11uu'=
mm'.1u1u'\ar[r]^-{\varrho\varrho'} &
1}).
\end{eqnarray*}
The monoidal product of 1-cells $(f,\varphi^2,\varphi^0):(A,m,u,\alpha,\lambda,\varrho)\to (A',m',u',\alpha',\lambda',\varrho')$ and $(g,\gamma^2,\gamma^0):(B,m,u,\alpha,\lambda,\varrho)\to (B',m',u',\alpha',\lambda',\varrho')$ is 
$$
(\!\!\xymatrix@C=16pt{
AB \ar[r]^-{fg} & A'B'}
\!\!,\!\!
\xymatrix@C=25pt{
fg.mm.1\sigma 1\ar[r]^-{\varphi^2\gamma^2.1} &
m'm'.ffgg.1\sigma 1=
m'm'.1\sigma 1.fgfg}\!\!,\!\!
\xymatrix@C=20pt{
fg.uu\ar[r]^-{\varphi^0\gamma^0} &
u'u'}\!\!).
$$
The monoidal product of opmonoidal 2-cells is their monoidal product as 2-cells in $\mathcal M$.
The symmetry has the components $(\sigma,1, 1)$.
This gives the object map of a  2-functor $(-)_{01}:\mathsf{sm}\mbox{-}\mathsf{2Cat} \to \mathsf{sm}\mbox{-}\mathsf{2Cat}$.

A strict monoidal 2-functor $\mathsf F:\mathcal M \to \mathcal N$ induces a 2-functor $\mathsf F_{01}:\mathcal M_{01} \to \mathcal N_{01}$. It takes a 0-cell of $\mathcal M_{01}$ --- that is, a pseudomonoid $(A,m,u,\alpha,\lambda,\varrho)$ in $\mathcal M$ --- to the pseudomonoid in $\mathcal N$,
\begin{eqnarray*}
&(&\mathsf F A,
\xymatrix@C=16pt{
(\mathsf F A)(\mathsf F A)=\mathsf F (AA) \ar[r]^-{\mathsf F m} & \mathsf F A},
\xymatrix@C=16pt{
I=\mathsf FI\ar[r]^-{\mathsf F u} & \mathsf F A},\\
&&\xymatrix@C=16pt{
\mathsf F m.(\mathsf F m)1=\mathsf F (m.m1) \ar[r]^-{\mathsf F \alpha} &
\mathsf F(m.1m)=
\mathsf F m.1(\mathsf F m)},\\
&&\xymatrix@C=16pt{
\mathsf F m.(\mathsf F u)1= \mathsf F (m.u1) \ar[r]^-{\mathsf F \lambda} &
\mathsf F1 = 1},
\xymatrix@C=16pt{
\mathsf F m.1(\mathsf F u)= \mathsf F (m.1u) \ar[r]^-{\mathsf F \varrho} &
\mathsf F1 = 1}).
\end{eqnarray*}
The 2-functor $\mathsf F_{01}$ sends an opmonoidal 1-cell $(f,\varphi^2,\varphi^0)$ to 
$$
(\mathsf Ff,
\xymatrix@C=16pt{
\mathsf F m'.(\mathsf F f)(\mathsf F f)=\mathsf F (m'.ff) \ar[r]^-{\mathsf F \varphi^2} &
\mathsf F(f.m)=\mathsf F f.\mathsf F m},
\xymatrix@C=16pt{
\mathsf F u' \ar[r]^-{\mathsf F \varphi^0} &
\mathsf F(f.u)=\mathsf F f.\mathsf F u}).
$$
Finally, $\mathsf F_{01}$ sends an opmonoidal 2-cell $\omega$ to $\mathsf F \omega$.
If the strict monoidal 2-functor $\mathsf F$ is also symmetric, then $\mathsf F_{01}$ is strict monoidal and symmetric.
This gives the 1-cell part of a  2-functor $(-)_{01}:\mathsf{sm}\mbox{-}\mathsf{2Cat} \to \mathsf{sm}\mbox{-}\mathsf{2Cat}$.

For a monoidal 2-natural transformation $\Theta:\mathsf F \to \mathsf G$, there is a 2-natural transformation $\Theta_{01}:\mathsf F_{01} \to \mathsf G_{01}$ with the component at a pseudomonoid $(A,m,u,\alpha,\lambda,\varrho)$ in $\mathcal M$ 
$$
(\Theta_A,
\Theta_A.\mathsf F m = \mathsf G m.\Theta_{AA}=\mathsf G m.\Theta_A\Theta_A,
\Theta_A.\mathsf F u = \mathsf G u.\Theta_I=\mathsf G u).
$$
It is strict monoidal thanks to the strict monoidality of $\Theta$.
These maps define 
\begin{enumerate}[(i)]
\item a 2-functor $(-)_{01}$ from the 2-category of strict monoidal 2-categories, strict monoidal 2-functors and monoidal 2-natural transformations to $\mathsf{2Cat}$,
\item a 2-functor $(-)_{01}:\mathsf{sm}\mbox{-}\mathsf{2Cat} \to \mathsf{sm}\mbox{-}\mathsf{2Cat}$.
\end{enumerate}
(It would be easy to extend them to monoidal modifications --- defined in the evident way --- but this is not needed for the purposes of the present paper.)

(2) There are symmetrically constructed 2-functors $(-)_{10}:=(((-)^\vop)_{01})^\vop$ of both kinds in items (i) and (ii) of the list in part (1) above.
They send a strict monoidal 2-category $\mathcal M$ to the 2-category 
$\mathcal M_{10}=((\mathcal M^{\vop})_{01})^\vop$ whose 0-cells are again the pseudomonoids in $\mathcal M$. Its 1-cells and 2-cells are known as monoidal 1-cells and monoidal 2-cells in $\mathcal M$, respectively.

(3) Next we show that the 2-endofunctors on $\mathsf{sm}\mbox{-}\mathsf{2Cat}$ in parts (1) and (2) commute up-to an irrelevant 2-natural isomorphism (this extends a claim in  \cite[Proposition 6.75]{AguiarMahajan}). 

First we compare the symmetric strict monoidal 2-categories $(\mathcal M_{10})_{01}$ and $(\mathcal M_{01})_{10}$ for a symmetric strict monoidal 2-category $\mathcal M$.
A 0-cell of $(\mathcal M_{10})_{01}$ is a pseudomonoid in $\mathcal M_{10}$. As such, it consists of the following data.
\begin{itemize}
\item A $0$-cell of $\mathcal M_{10}$; that is, a pseudomonoid $(A,m^\verti,u^\verti,\alpha^\verti,\lambda^\verti,\varrho^\verti)$ in $\mathcal M$.
\item Multiplication and unit 1-cells in $\mathcal M_{10}$. That is, monoidal 1-cells in $\mathcal M$\\
$(m^\hori,\xi,\xi^0):(AA,m^\verti m^\verti.1\sigma1,u^\verti u^\verti,\alpha^\verti\alpha^\verti .1.1,\lambda^\verti \lambda^\verti,\varrho^\verti\varrho^\verti) \to (A,m^\verti,u^\verti,\alpha^\verti,\lambda^\verti,\varrho^\verti)$ 
and \\
$(u^\hori,\xi_0,\xi^0_0): (I,1,1,1,1,1) \to (A,m^\verti,u^\verti,\alpha^\verti,\lambda^\verti,\varrho^\verti)$.
\item Associativity and unitality 2-cells in $\mathcal M_{10}$.  They are invertible monoidal 2-cells in $\mathcal M$, $\alpha^\hori:m^\hori.m^\hori1 \to m^\hori.1m^\hori$, $\lambda^\hori:m^\hori.u^\hori1\to 1$ and $\varrho^\hori:m^\hori.1u^\hori\to 1$ satisfying MacLane's pentagon and triangle conditions.
\end{itemize}

Symmetrically, a 0-cell of $(\mathcal M_{01})_{10}$ is a pseudomonoid in $\mathcal M_{01}$, which consists of the following data.
\begin{itemize}
\item A $0$-cell of $\mathcal M_{01}$; that is, a pseudomonoid $(A,m^\hori,u^\hori,\alpha^\hori,\lambda^\hori,\varrho^\hori)$ in $\mathcal M$.
\item Multiplication and unit 1-cells in $\mathcal M_{01}$. That is, opmonoidal 1-cells in $\mathcal M$\\
$(m^\verti,\xi.1,\xi_0)\!:\!(AA,m^\hori m^\hori.1\sigma1,u^\hori u^\hori\!,\alpha^\hori\alpha^\hori.1.1,\lambda^\hori \lambda^\hori\!,\varrho^\hori\varrho^\hori)\! \to \! (A,m^\hori\!,u^\hori\!,\alpha^\hori\!,\lambda^\hori\!,\varrho^\hori)$, \\
$(u^\verti,\xi^0,\xi^0_0): (I,1,1,1,1,1) \to (A,m^\hori,u^\hori,\alpha^\hori,\lambda^\hori,\varrho^\hori)$.
\item Associativity and unitality 2-cells in $\mathcal M_{01}$.  They are invertible opmonoidal 2-cells in $\mathcal M$, $\alpha^\verti:m^\verti.m^\verti1 \to m^\verti.1m^\verti$, $\lambda^\verti:m^\verti.u^\verti 1\to 1$ and $\varrho^\verti:m^\verti.1u^\verti\to 1$ satisfying MacLane's pentagon and triangle conditions.
\end{itemize}

Both of these sets of data amount to two pseudomonoid structures $(A,m^\verti,u^\verti,\alpha^\verti,\lambda^\verti,\varrho^\verti)$ and $(A,m^\hori,u^\hori,\alpha^\hori,\lambda^\hori,\varrho^\hori)$ on the same object $A$ together with 2-cells in $\mathcal M$
$$
\xymatrix@C=16pt{
m^\verti.m^\hori m^\hori \ar[r]^-\xi & m^\hori.m^\verti m^\verti.1\sigma 1}\qquad
\xymatrix@C=16pt{
u^\verti \ar[r]^-{\xi^0} & m^-.u^\verti u^\verti}\qquad 
\xymatrix@C=16pt{
m^\verti.u^\hori u^\hori \ar[r]^-{\xi_0} & u^\hori} \qquad
\xymatrix@C=16pt{
u^\verti \ar[r]^-{\xi^0_0} & u^\hori}
$$
(giving rise to the 2-cell $\xi.1:m^\verti.m^\hori m^\hori.1\sigma1 \to m^\hori.m^\verti m^\verti.1\sigma1.1\sigma1=m^\hori.m^\verti m^\verti$).
They are subject to conditions as follows.
\begin{itemize}
\item Associativity and unitality of the monoidal 1-cell $(m^\hori,\xi,\xi^0)$; equivalently, compatibility of the opmonoidal 2-cells $\alpha^\verti$, $\lambda^\verti$ and $\varrho^\verti$ with the binary parts of the opmonoidal  structures of their source and target 1-cells.
\item Associativity and unitality of the monoidal 1-cell $(u^-,\xi_0,\xi^0_0)$; equivalently, compatibility of the opmonoidal 2-cells $\alpha^\verti$, $\lambda^\verti$ and $\varrho^\verti$ with the nullary parts of the opmonoidal  structures of their source and target 1-cells.
\item Compatibility of the monoidal 2-cells $\alpha^\hori$, $\lambda^\hori$ and $\varrho^\hori$  with the binary parts of the monoidal  structures of their source and target 1-cells; equivalently, coassociativity and counitality of the opmonoidal 1-cell $(m^\verti,\xi.1,\xi_0)$.
\item Compatibility of the monoidal 2-cells $\alpha^\hori$, $\lambda^\hori$ and $\varrho^\hori$ with the nullary parts of the monoidal  structures of their source and target 1-cells; equivalently, coassociativity and counitality of the opmonoidal 1-cell $(u^\verti,\xi^0,\xi^0_0)$.
\end{itemize}
Such a datum is called a {\em duoidal} or {\em 2-oidal} object of $\mathcal M$.

A 1-cell in either one of the 2-categories $(\mathcal M_{10})_{01}$ and $(\mathcal M_{01})_{10}$ consists of a 1-cell $f:A\to A'$ in $\mathcal M$ together with a monoidal structure 
$(\varphi_2:m^{\verti\prime }.ff \to f.m^\verti,\varphi_0:u^{\verti\prime } \to f.u^\verti)$ and an opmonoidal structure 
$(\varphi^2:f.m^\hori \to m^{\hori\,\prime }.ff,\varphi^0:f.u^\hori \to u^{\hori\,\prime })$
subject to the following compatibility conditions.
\begin{itemize}
\item The monoidal 2-cell $\varphi^2:(f,\varphi_2,\varphi_0).(m^\hori,\xi,\xi^0)\to (m^{\hori\, \prime},\xi',\xi^{0\prime}).(ff,\varphi_2\varphi_2.1,$ $\varphi_0\varphi_0)$ is compatible with the binary parts of the monoidal  structures of its source and target 1-cells; equivalently, the opmonoidal 2-cell $\varphi_2:(m^{\verti\prime},\xi'.1,\xi'_0).$ 
$(ff,$ $\varphi^2\varphi^2.1,\varphi^0\varphi^0) \to (f,\varphi^2,\varphi^0).(m^\verti,\xi.1,\xi_0)$ is compatible with the binary parts of the opmonoidal  structures of its source and target 1-cells.
\item The monoidal 2-cell $\varphi^2$ is compatible with the nullary parts of the monoidal  structures of its source and target 1-cells; equivalently, the opmonoidal 2-cell $\varphi_0:(u^{\verti\, \prime},\xi^{0\prime},$
$\xi^{0\prime}_0)\to (f,\varphi^2,\varphi^0).(u^\verti,\xi^0,\xi^0_0)$ is compatible with the binary parts of the opmonoidal  structures of its source and target 1-cells.
\item The monoidal 2-cell $\varphi^0:(f,\varphi_2,\varphi_0).(u^\hori,\xi_0,\xi^0_0)\to (u^{\hori\,\prime},\xi'_0,\xi^{0\prime}_0)$ is compatible with the binary parts of the monoidal  structures of its source and target 1-cells; equivalently, the opmonoidal 2-cell $\varphi_2$ is compatible with the nullary parts of the opmonoidal  structures of its source and target 1-cells.
\item The monoidal 2-cell $\varphi^0$  is compatible with the nullary parts of the monoidal  structures of its source and target 1-cells; equivalently, the opmonoidal 2-cell $\varphi_0$ is compatible with the nullary parts of the opmonoidal  structures of its source and target 1-cells.
\end{itemize}
Such a datum is termed a {\em $(1,1)$-oidal} 1-cell. In the particular case of $\mathcal M=\mathsf{Cat}$, in \cite{AguiarHaimLopezFranco} it was called a {\em bimonoidal functor}.

Finally, a 2-cell $(f,\varphi_2,\varphi_0,\varphi^2,\varphi^0) \to (g,\gamma_2,\gamma_0,\gamma^2,\gamma^0)$ in either one of the 2-categories $(\mathcal M_{10})_{01}$ and $(\mathcal M_{01})_{10}$ is a 2-cell in $\mathcal M$ which is both 
\begin{itemize}
\item
a monoidal 2-cell $(f,\varphi_2,\varphi_0)\to (g,\gamma_2,\gamma_0)$ and 
\item
an opmonoidal 2-cell $(f,\varphi^2,\varphi^0)\to (g,\gamma^2,\gamma^0)$.
\end{itemize}
We say that it is a {\em $(1,1)$-oidal} 2-cell in $\mathcal M$. 

This proves the isomorphism of the 2-categories $(\mathcal M_{10})_{01}$ and $(\mathcal M_{01})_{10}$. The obtained isomorphism is clearly symmetric strict monoidal and 2-natural.  

(4) Thanks to their commuting verified in item (3) above, we may apply the 2-functor in part (1) $q$ times and the 2-functor in part (2) $p$ times in an arbitrary order, for any non-negative integers $q$ and $p$. Thereby we obtain a 2-functor $(-)_{pq}:\mathsf{sm}\mbox{-}\mathsf{2Cat} \to \mathsf{sm}\mbox{-}\mathsf{2Cat}$ (so that $(-)_{00}$ is the identity 2-functor).
In particular, it takes any symmetric strict monoidal 2-category $\mathcal M$ to a symmetric strict monoidal 2-category $\mathcal M_{pq}$. Recall that a 0-cell in $\mathcal M_{p+1,q}\cong (\mathcal M_{pq})_{10}$ is the same as a 0-cell in $\mathcal M_{p,q+1}\cong (\mathcal M_{pq})_{01}$; namely, a pseudomonoid in $\mathcal M_{pq}$. Thus the notion of 0-cell in $\mathcal M_{p,n-p}$ only depends on the non-negative integer $n$ but not on the integer $0\leq p \leq n$. (For the particular 2-category $\mathcal M=\mathsf{Cat}$ this was discussed in Proposition 7.49 and Remark 7.51 of \cite{AguiarMahajan}.)
\end{example}


\section{$(0,q)$-oidal monads}
\label{sec:0q-monad}

The aim of this section is to prove the commutativity (up-to 2-natural isomorphism) of the diagram
\begin{equation} \label{eq:Mpq}
\xymatrix{
\mathsf{sm}\mbox{-}\mathsf{2Cat} \ar[r]^-{(-)_{01}} \ar[d]_-{\mathsf{Mnd}} &
\mathsf{sm}\mbox{-}\mathsf{2Cat}  \ar[d]^-{\mathsf{Mnd}} \\
\mathsf{sm}\mbox{-}\mathsf{2Cat} \ar[r]_-{(-)_{01}}  &
\mathsf{sm}\mbox{-}\mathsf{2Cat} }
\end{equation}
of the 2-functors of Example \ref{ex:Mnd} and Example \ref{ex:Mpq}~(1). Its iteration yields 2-natural isomorphisms 
$\mathsf{Mnd}(\mathcal M_{0q})\cong \mathsf{Mnd}(\mathcal M)_{0q}$ for any symmetric strict monoidal 2-category $\mathcal M$ and any non-negative integer $q$. 
A 0-cell in $\mathsf{Mnd}(\mathcal M_{0q})\cong \mathsf{Mnd}(\mathcal M)_{0q}$ is called a {\em $(0,q)$-oidal monad}.

For 2-endofunctors on the 2-category $\mathsf{2Cat}_\times$ of {\em Cartesian monoidal} 2-categories (instead of our $\mathsf{sm}\mbox{-}\mathsf{2Cat}$), commutativity of the analogous diagram was proved  (by actually the same steps) in \cite[Lemma 3.1]{Zawadowski}. A possible generalization, discussed here practically without any additional technical difficulty, is proposed in the introduction and the concluding remarks of \cite{Zawadowski}.

In order to prove the commutativity of \eqref{eq:Mpq} (up-to 2-natural isomorphism), we need to compare first the actions of the 2-functors around it on an arbitrary 0-cell; that is, symmetric strict monoidal 2-category $\mathcal M$.
A 0-cell in $\mathsf{Mnd}(\mathcal M_{01})$ is a monad in $\mathcal M_{01}$ thus it consists of the following data.
\begin{itemize}
\item A 0-cell of $\mathcal M_{01}$; that is, a pseudomonoid $(A,m,u,\alpha,\lambda,\varrho)$ in $\mathcal M$.
\item A 1-cell $(A,m,u,\alpha,\lambda,\varrho) \to (A,m,u,\alpha,\lambda,\varrho)$ in $\mathcal M_{01}$; that is, an opmonoidal 1-cell $(t:A\to A, \tau^2:t.m\to m.tt, \tau^0:t.u\to u)$ in $\mathcal M$.
\item Multiplication and unit 2-cells in $\mathcal M_{01}$; that is, opmonoidal 2-cells $\mu:(t,\tau^2,\tau^0).$ $(t,\tau^2,\tau^0)\to (t,\tau^2,\tau^0)$ and $\eta:(1,1,1)\to (t,\tau^2,\tau^0)$ in $\mathcal M$ satisfying the associativity and unit conditions. 
\end{itemize}
On the other hand, a 0-cell in $\mathsf{Mnd}(\mathcal M)_{01}$ is a pseudomonoid in $\mathsf{Mnd}(\mathcal M)$ thus it consists of the following data.
\begin{itemize}
\item A 0-cell in $\mathsf{Mnd}(\mathcal M)$; that is, a monad $(A,t,\mu,\eta)$ in $\mathcal M$.
\item Multiplication and unit 1-cells in $\mathsf{Mnd}(\mathcal M)$; that is, monad morphisms $(m:AA\to A,\tau^2:t.m\to m.tt)$ and $(u:I\to A,\tau^0:t.u\to u)$ in $\mathcal M$.
\item Associativity and unitality 2-cells in $\mathsf{Mnd}(\mathcal M)$; that is, invertible monad transformations 
$\alpha:(m.m1,(1.\tau^21)\updot(\tau^2.1)) \to (m.1m,(1.1\tau^2)\updot(\tau^2.1))$,
$\lambda:(m.u1,$ $ (1.\tau^01)\updot(\tau^2.1))$
$\to (1,1)$ and
$\varrho:(m.1u, (1.1\tau^0)\updot(\tau^2.1))\to (1,1)$
satisfying MacLane's pentagon and triangle conditions.
\end{itemize}
Both sets of data above amount to a pseudomonoid $(A,m,u,\alpha,\lambda,\varrho)$, an opmonoidal 1-cell $(t,\tau^2,\tau^0):(A,m,u,\alpha,\lambda,\varrho) \to (A,m,u,\alpha,\lambda,\varrho)$ and a monad $(A,t,\mu,\eta)$ in $\mathcal M$ with common 1-cell part $t$; subject to the following compatibility conditions.
$$
\xymatrix@C=20pt{
t.t.m\ar[r]^-{1.\tau^2} \ar[d]_-{\mu.1} &
t.m.tt \ar[r]^-{\tau^2.1} &
m.tt.tt \ar[d]^-{1.\mu\mu} \\
t.m \ar[rr]_-{\tau^2} &&
m.tt}\ 
\xymatrix@C=20pt{
t.t.u\ar[r]^-{1.\tau^0} \ar[d]_-{\mu.1} &
t.u \ar[r]^-{\tau^0} &
u \ar@{=}[d] \\
t.u \ar[rr]_-{\tau^0} &&
u}\ 
\xymatrix@C=20pt{
m\ar@{=}[r] \ar[d]_-{\eta.1} &
m\ar[d]^-{1.\eta\eta} \\
t.m \ar[r]_-{\tau^2} &
m.tt}\ 
\xymatrix@C=20pt{
u\ar@{=}[r] \ar[d]_-{\eta.1} &
u \ar@{=}[d] \\
t.u \ar[r]_-{\tau^0} &
u}
$$
The first two diagrams express the opmonoidality of the 2-cell $\mu$; equivalently, the multiplicativity of the monad morphisms $(m,\tau^2)$ and $(u,\tau^0)$, respectively.
The last two diagrams express the  opmonoidality of the 2-cell $\eta$; equivalently, the unitality of the monad morphisms $(m,\tau^2)$ and $(u,\tau^0)$, respectively.
This structure is termed a {\em $(0,1)$-oidal} or {\em opmonoidal monad}.

Both in $\mathsf{Mnd}(\mathcal M_{01})$ and $\mathsf{Mnd}(\mathcal M)_{01}$ a 1-cell is an opmonoidal 1-cell $(f,\varphi^2,\varphi^0):(A,m,u,$
$\alpha,\lambda,\varrho)\to (A',m',u',\alpha',\lambda',\varrho')$ and a monad morphism $(f,\Phi):(A,t,\mu,\eta)\to (A',t',\mu',\eta')$ in $\mathcal M$ with common 1-cell part $f$ such that the following compatibility conditions hold.
$$
\xymatrix{
t'.f.m \ar[d]_-{\Phi.1} \ar[r]^-{1.\varphi^2} &
t'.m'.ff \ar[r]^-{\tau^{\prime 2}.1}  &
m'.t't'.ff \ar[d]^-{1.\Phi\Phi} \\
f.t.m \ar[r]_-{1.\tau^2} &
f.m.tt \ar[r]_-{\varphi^2.1} &
m'.ff.tt} \qquad\qquad
\xymatrix{
t'.f.u \ar[d]_-{\Phi.1} \ar[r]^-{1.\varphi^0} &
t'.u' \ar[r]^-{\tau^{\prime 0}} &
u' \ar@{=}[d] \\
f.t.u \ar[r]_-{1.\tau^0}  &
f.u \ar[r]_-{\varphi^0} &
u'}
$$
The first diagram expresses the compatibility of the opmonoidal 2-cell $\Phi:(t',\tau^{\prime 2},\tau^{\prime 0}).$ $(f,\varphi^2,\varphi^0)\to (f,\varphi^2,\varphi^0).(t,\tau^2,\tau^0)$ with the binary parts of the opmonoidal structures of its source and target 1-cells; equivalently, the requirement that $\varphi^2:(f,\Phi).(m,\tau^2) \to (m',\tau^{\prime 2}).(ff,\Phi\Phi)$ is a monad transformation.
The second diagram expresses the compatibility of the opmonoidal 2-cell $\Phi$ with the nullary parts of the opmonoidal structures of its source and target 1-cells; equivalently, the requirement that $\varphi^0:(f,\Phi).(u,\tau^0) \to (u',\tau^{\prime 0})$ is a monad transformation.

Finally, both in $\mathsf{Mnd}(\mathcal M_{01})$ and $\mathsf{Mnd}(\mathcal M)_{01}$ a 2-cell is a 2-cell in $\mathcal M$ which is both opmonoidal and a monad transformation.

With this we established an isomorphism between the 2-categories $\mathsf{Mnd}(\mathcal M_{01})$ and $\mathsf{Mnd}(\mathcal M)_{01}$. It is clearly symmetric strict monoidal and it is straightforward to see its 2-naturality.


\section{$q$-oidal Eilenberg--Moore objects}
\label{sec:q-EM}

For a symmetric strict monoidal 2-category $\mathcal M$, any symmetric strict monoidal 2-functor $\mathsf H:\mathsf{Mnd}(\mathcal M)\to \mathcal M$ induces a symmetric strict monoidal 2-functor
\begin{equation} \label{eq:H0q}
\xymatrix{
\mathsf{Mnd}(\mathcal M_{0q})\cong \mathsf{Mnd}(\mathcal M)_{0q}
\ar[r]^-{\mathsf H_{0q}} &
\mathcal M_{0q}}
\end{equation}
whose object map sends a $(0,q)$-oidal monad to a $q$-oidal object of $\mathcal M$. 
The aim of this section is to investigate when the Eilenberg-Moore construction in $\mathcal M$ yields such a symmetric strict monoidal 2-functor $\mathsf H:\mathsf{Mnd}(\mathcal M)\to \mathcal M$; hence the $q$-oidal structure of the base object of a $(0,q)$-oidal monad in $\mathcal M$ lifts to the Eilenberg-Moore object along the `forgetful' 1-cells (see \eqref{eq:EMlift}). The results of this section  extend \cite[Theorem 8.2]{AguiarHaimLopezFranco} and place it in a broader context. We apply analogous ideas to those in \cite{Zawadowski}. In the particular case when the strict monoidal structure of $\mathcal M$ is Cartesian, 
a stronger result --- proving also that $\mathsf H_{0q}$ provides Eilenberg-Moore construction on $\mathcal M_{0q}$; cf. Remark \ref{rem:EM_10} --- 
was obtained in \cite[Theorem 5.1]{Zawadowski}.

The Eilenberg-Moore construction for 2-categories was shortly recalled from \cite{Street:FTM} in the Introduction. Namely, a 2-category $\mathcal M$ is said to admit Eilenberg-Moore construction if the inclusion 2-functor $\mathcal M\to \mathsf{Mnd}(\mathcal M)$ possesses a right 2-adjoint $\mathsf{H}$. The image of a monad $(A,t)$ under $\mathsf{H}$ is denoted by $A^t$ and it is called the Eilenberg-Moore object of the monad.
The component of the counit of this 2-adjunction at any monad $(A,t)$ is necessarily of the form $(u^t,1.\epsilon^t:t.u^t = u^t.f^t.u^t \to u^t):(A^t,1) \to (A,t)$, where the 1-cell $u^t:A^t\to A$ has a left adjoint $f^t$ in $\mathcal M$ and $\epsilon^t:f^t.u^t\to 1$ is the counit of this adjunction, see \cite[pages 151-153]{Street:FTM}. The adjunction $f^t\dashv u^t:A^t \to A$ generates the monad $(A,t)$ in the sense that the 1-cell $u^t.f^t$ is equal to $t$, the 2-cell $1.\epsilon^t.1:f^t.u^t.f^t.u^t \to f^t.u^t$ is equal to the multiplication of the monad while the unit $\eta^t:1\to u^t.f^t$ of the adjunction is equal to the unit of the monad, see \cite[Theorem 2]{Street:FTM}. 
By \cite[Theorem 3]{Street:FTM}, for any other adjunction $l\dashv r:B\to A$ generating the same monad $(A,t)$, there is a unique comparison 1-cell $k$ rendering commutative the first diagram of
\begin{equation}\label{eq:comp-counit}
\xymatrix{
A \ar[r]^-l \ar[d]_-{f^t} &
B \ar[d]^-r \ar@{-->}[ld]_-k \\
A^t\ar[r]_-{u^t} &
A}\qquad \qquad \qquad
\xymatrix{
k.l.r\ar[r]^-{1.\epsilon} \ar@{=}[d] &
k \ar@{=}[d] \\
f^t.u^t.k \ar[r]_-{\epsilon^t.1} &
\ k .}
\end{equation}
With its help, the counit $\epsilon$ of the adjunction $l\dashv r$ fits in the second commutative diagram of \eqref{eq:comp-counit}.

Consider now a strict monoidal 2-category which admits Eilenberg-Moore construction. 
The adjunction $1\dashv 1:I\to I$ of identity functors on the monoidal unit $I$ generates the monad $(I,1)$. Hence there is a comparison 1-cell $I\to I^1$.
For any monads $(A,t)$ and $(B,s)$, the adjunction $f^tf^s \dashv u^tu^s:A^tB^s \to AB$ generates the monad $(AB,ts)$. Hence there is a unique comparison 1-cell $A^tB^s \to (AB)^{ts}$.

\begin{definition} \label{def:mon-EM}
We say that a strict monoidal 2-category admits {\em monoidal  Eilenberg--Moore construction} if it admits Eilenberg--Moore construction and the comparison 1-cells $I\to I^1$  and $A^tB^s \to (AB)^{ts}$ are identities for the monoidal unit $I$ and all monads $(A,t)$ and $(B,s)$.
\end{definition}

For example, the strict monoidal 2-category $\mathsf{Cat}$ of Example \ref{ex:Cat} admits monoidal Eilenberg-Moore construction. More generally, monoidality of the Eilenberg-Moore construction in a Cartesian monoidal 2-category is a key assumption also in \cite[Theorem 5.1]{Zawadowski}.

\begin{proposition} \label{prop:mon-EM}
If a strict monoidal 2-category $\mathcal M$ admits monoidal Eilenberg--Moore construction, then the Eilenberg--Moore 2-functor $\mathsf{H}:\mathsf{Mnd}(\mathcal M) \to \mathcal M$ is strict monoidal.
If in addition $\mathcal M$ is symmetric then also $\mathsf H$ is symmetric.
\end{proposition}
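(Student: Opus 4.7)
The plan is to exploit the general principle that a right 2-adjoint of a strict monoidal 2-functor carries a canonical lax monoidal structure whose constraint 1-cells are the mates of the identity constraints of the left adjoint; under the hypothesis these mates turn out to be identities, making $\mathsf H$ strict monoidal. The inclusion 2-functor $i\colon\mathcal M\to\mathsf{Mnd}(\mathcal M)$ recalled in the Introduction is manifestly strict monoidal (and symmetric whenever $\mathcal M$ is): a 0-cell $A$ is sent to the trivial monad $(A,1,1,1)$, whose monoidal product with another trivial monad is again trivial, and the monoidal unit of $\mathsf{Mnd}(\mathcal M)$ described in Example~\ref{ex:Mnd} is precisely $(I,1,1,1)$; the symmetry of $\mathsf{Mnd}(\mathcal M)$ has trivial 2-cell part, so the symmetric compatibility is immediate.

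To identify the canonical lax monoidal constraint $A^tB^s\to(AB)^{ts}$ of $\mathsf H$ with the comparison 1-cell of Definition~\ref{def:mon-EM}, I would unravel the mate: it is the unique 1-cell $k\colon A^tB^s\to(AB)^{ts}$ in $\mathcal M$ whose image under $i$, postcomposed with the counit $(u^{ts},1.\epsilon^{ts})\colon((AB)^{ts},1)\to(AB,ts)$ in $\mathsf{Mnd}(\mathcal M)$, equals the monoidal product $(u^t,1.\epsilon^t)(u^s,1.\epsilon^s)=(u^tu^s,(1.\epsilon^t)(1.\epsilon^s))$ of the counits at $(A,t)$ and $(B,s)$. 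The 1-cell component of this equality reads $u^{ts}.k=u^tu^s$, which is precisely the first diagram of \eqref{eq:comp-counit} applied to the adjunction $f^tf^s\dashv u^tu^s$ generating the monad $ts$ on $AB$; and the 2-cell component is the compatibility recorded in the second diagram of \eqref{eq:comp-counit}. So $k$ coincides with the comparison 1-cell, and an entirely analogous calculation identifies the unit constraint $I\to I^1$ with its comparison. The hypothesis forces both to be identities, so the lax structure on $\mathsf H$ is in fact strict.

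From strictness of the constraints, the remaining 1-cell and 2-cell equations defining a strict monoidal 2-functor follow from uniqueness of adjoint transposes: for monad morphisms $(f,\varphi)\colon(A,t)\to(A',t')$ and $(g,\gamma)\colon(B,s)\to(B',s')$, both $\mathsf H(f,\varphi).\mathsf H(g,\gamma)$ and $\mathsf H((f,\varphi)(g,\gamma))$ are 1-cells $A^tB^s\to {A'}^{t'}{B'}^{s'}$ whose image under $i$, postcomposed with the counit of the 2-adjunction at $((A'B')^{t's'},1)=({A'}^{t'}{B'}^{s'},1)$, equals $(fg,\varphi\gamma)$ composed with the counit at $((AB)^{ts},1)=(A^tB^s,1)$; this follows from 2-naturality of the counit and the strict monoidality of $i$. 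Injectivity of the 2-adjunction bijection then yields their equality, and the 2-cell case is strictly analogous.

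Symmetry is handled by the same uniqueness principle. The symmetry component $\sigma_{(A,t),(B,s)}=(\sigma_{A,B},1)$ in $\mathsf{Mnd}(\mathcal M)$ described in Example~\ref{ex:Mnd} has trivial 2-cell part, so $\mathsf H(\sigma_{A,B},1)\colon A^tB^s\to B^sA^t$ is the unique lift of $\sigma_{A,B}$ characterised via postcomposition with the counit at $(B^sA^t,1)$. By 2-naturality of $\sigma$ applied to the 1-cells $u^t$ and $u^s$, the ambient symmetry $\sigma_{A^t,B^s}$ in $\mathcal M$ satisfies the same characterising equation, hence $\mathsf H(\sigma_{A,B},1)=\sigma_{A^t,B^s}$. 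I expect the only nontrivial bookkeeping to be in the first identification, namely verifying that the mate of the identity really is the comparison 1-cell; once that step is pinned down, the rest is routine manipulation of universal properties.
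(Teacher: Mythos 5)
Your proposal is correct and follows essentially the same route as the paper: it identifies the monoidal constraints of $\mathsf H$ with the comparison 1-cells of Definition \ref{def:mon-EM} (identities by hypothesis, whence also $\epsilon^{ts}=\epsilon^t\epsilon^s$ via the second diagram of \eqref{eq:comp-counit}), and then deduces the remaining equalities on 1-cells, 2-cells and the symmetry from the uniqueness clause of the 2-adjunction, exactly as the paper does by exhibiting a common image under the 2-natural isomorphism $\mathcal M((AB)^{ts},(A'B')^{t's'})\cong\mathsf{Mnd}(\mathcal M)(((AB)^{ts},1),(A'B',t's'))$. The doctrinal-adjunction framing at the start is a slightly more conceptual packaging of the same computation.
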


\begin{proof}
The monoidal unit $I$ is preserved by the assumption that the comparison 1-cell $I\to I^1=\mathsf{H}(I,1)$ is the identity.
The equality of 2-functors $\mathsf{H}(-\otimes -)=\mathsf{H}(-)\otimes \mathsf{H}(-)$ holds on any 0-cells $(A,t)$ and $(B,s)$ by the assumption that the comparison 1-cell $A^tB^s \to (AB)^{ts}$ is the identity.
By the second diagram of \eqref{eq:comp-counit}, $\epsilon^{ts}=\epsilon^t\epsilon^s$.
Therefore for any 1-cells $(h,\chi):(A,t)\to (A',t')$ and $(g,\gamma):(B,s)\to (B',s')$, the 2-natural isomorphism $\mathcal M((AB)^{ts},(A'B')^{t's'}) \cong \mathsf{Mnd}(\mathcal M)(((AB)^{ts},1),(A'B',t's'))$ sends both objects $\mathsf{H}(hg,\chi\gamma)$ and $\mathsf{H}(h,\chi)\mathsf{H}(g,\gamma)$ to the same object 
$$
(hg.u^{ts},
\xymatrix{
t's'.hg.u^{ts} \ar[r]^-{\chi\gamma.1} &
hg.ts.u^{ts} = hg.u^{ts}.f^{ts}.u^{ts} \ar[r]^-{1.1.\epsilon^{ts}} &
hg.u^{ts}})
$$
which proves their equality. 
For any 2-cells $\omega$ and $\vartheta$, both 2-cells $\mathsf{H}(\omega \vartheta)$ and $\mathsf{H}(\omega )\mathsf{H}(\vartheta)$ are the liftings of the same monad transformation $\omega\vartheta$ hence they are equal.

Assume now the existence of a symmetry $\sigma$ of $\mathcal M$. For any monads $(A,t)$ and $(B,s)$, the 2-natural isomorphism $\mathcal M((AB)^{ts},(BA)^{st})\cong \mathsf{Mnd}(\mathcal M)(((AB)^{ts},1),(BA,st))$ sends both objects $\mathsf{H}(\sigma_{A,B},1)$ and $\sigma_{A^t,B^s}$ to the same object 
$$
(\sigma_{A,B}.u^{ts},
\xymatrix{
st.\sigma_{A,B}.u^{ts}=
\sigma_{A,B}.ts.u^{ts}=
\sigma_{A,B}.u^{ts}.f^{ts}.u^{ts} \ar[r]^-{1.1.\epsilon^{ts}} &
\sigma_{A,B}.u^{ts}})
$$
which proves their equality.
\end{proof}

From \eqref{eq:H0q} we obtain the following generalization of \cite[Theorem 8.2]{AguiarHaimLopezFranco}.

\begin{corollary}
In a symmetric strict monoidal 2-category which admits mono\-idal Eilenberg--Moore construction, the $q$-oidal structure of the base object $A$ of any $(0,q)$-oidal monad $(A,t)$ lifts to the Eilenberg--Moore object $A^t$ along the 1-cell $u^t:A^t\to A$.
\end{corollary}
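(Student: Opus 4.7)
The plan is to deduce the corollary with essentially no fresh calculation, by combining Proposition~\ref{prop:mon-EM} with the construction recorded at the beginning of Section~\ref{sec:q-EM}. Under the standing hypothesis, Proposition~\ref{prop:mon-EM} already supplies a symmetric strict monoidal Eilenberg--Moore 2-functor $\mathsf H:\mathsf{Mnd}(\mathcal M)\to \mathcal M$. Feeding this $\mathsf H$ into \eqref{eq:H0q} (whose derivation is generic — any symmetric strict monoidal 2-functor from $\mathsf{Mnd}(\mathcal M)$ will do) therefore yields a symmetric strict monoidal 2-functor
$$
\mathsf H_{0q}:\mathsf{Mnd}(\mathcal M_{0q})\cong \mathsf{Mnd}(\mathcal M)_{0q}\to \mathcal M_{0q}.
$$
Here the asserted isomorphism on the left is obtained by iterating the 2-natural isomorphism $\mathsf{Mnd}(\mathcal M_{01})\cong \mathsf{Mnd}(\mathcal M)_{01}$ established in Section~\ref{sec:0q-monad}; by induction on $q$, this gives the $q$-fold version, using that $(-)_{01}$ is a 2-functor on $\mathsf{sm}\mbox{-}\mathsf{2Cat}$ as recorded in Example~\ref{ex:Mpq}~(1).

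Next I would read off the consequence on 0-cells. A 0-cell of $\mathsf{Mnd}(\mathcal M_{0q})$ is precisely a $(0,q)$-oidal monad $(A,t)$ in $\mathcal M$, and a 0-cell of $\mathcal M_{0q}$ is a $q$-oidal object of $\mathcal M$. Since $\mathsf H_{0q}$ is produced by transporting $\mathsf H$ through the 2-functors $(-)_{01}$, its effect on the underlying 0-cell of $\mathcal M$ is simply $\mathsf H(A,t)=A^t$. Thus $A^t$ automatically carries a $q$-oidal structure of $\mathcal M$.

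The last point to check — the actual content of the word ``lifts'' in the statement — is that the structure 1-cells and 2-cells on $A^t$ provided by $\mathsf H_{0q}$ satisfy diagrams of the form \eqref{eq:EMlift} with respect to $u^t$. This is immediate once one recalls that the component at $(A,t)$ of the counit of the 2-adjunction $\mathcal M\hookrightarrow \mathsf{Mnd}(\mathcal M)\dashv \mathsf H$ is the monad morphism $(u^t,1.\epsilon^t):(A^t,1)\to(A,t)$. Each piece of the $q$-oidal structure on $A^t$ (multiplications, units, associators, unitors, and the compatibility 2-cells between distinct pseudomonoid structures) arises as $\mathsf H$ applied to a monad morphism or monad transformation whose underlying datum in $\mathcal M$ is the corresponding piece of the $q$-oidal structure on $A$; 2-naturality of the counit $(u^t,1.\epsilon^t)$ at these monad morphisms and transformations is exactly the commutativity of \eqref{eq:EMlift}. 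This is the only non-bookkeeping step, and it is a formal consequence of the existence of the 2-adjunction rather than anything specific to the $q$-oidal setting, so I expect no real obstacle beyond assembling the already-proven ingredients.
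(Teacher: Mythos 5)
Your proposal is correct and follows essentially the same route as the paper: the paper derives the corollary directly by feeding the symmetric strict monoidal Eilenberg--Moore 2-functor $\mathsf H$ of Proposition \ref{prop:mon-EM} into \eqref{eq:H0q}, using the iterated isomorphism $\mathsf{Mnd}(\mathcal M_{0q})\cong\mathsf{Mnd}(\mathcal M)_{0q}$ from Section \ref{sec:0q-monad}. Your additional paragraph unpacking why the resulting structure is a lifting along $u^t$ in the sense of \eqref{eq:EMlift} (via the counit component $(u^t,1.\epsilon^t)$) is a correct elaboration of what the paper leaves implicit.
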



\section{$(p,0)$-oidal monads}

Without entering the details, in this section we sketch the dual of the situation in Section \ref{sec:0q-monad}. Although the omitted proofs are analogous to those in Section \ref{sec:0q-monad}, they do not seem to follow by any kind of abstract duality.

The 2-functors $\mathsf{Mnd}_{\mathsf{op}}$ of Example \ref{ex:Mnd} and $(-)_{10}$ of part (2) of Example \ref{ex:Mpq} constitute a diagram 
$$
\xymatrix{
\mathsf{sm}\mbox{-}\mathsf{2Cat} \ar[r]^-{(-)_{10}} 
\ar[d]_-{\mathsf{Mnd}_{\mathsf{op}} } &
\mathsf{sm}\mbox{-}\mathsf{2Cat}  \ar[d]^-{\mathsf{Mnd}_{\mathsf{op}}} \\
\mathsf{sm}\mbox{-}\mathsf{2Cat} \ar[r]_-{(-)_{10}}  &
\mathsf{sm}\mbox{-}\mathsf{2Cat} }
$$
which is commutative up-to 2-natural isomorphism.
So whenever there is a symmetric strict monoidal 2-functor $\mathsf V$ from $\mathsf{Mnd}_{\mathsf{op}}(\mathcal M)=\mathsf{Mnd}(\mathcal M_{\mathsf{op}})_{\mathsf{op}}$ to some symmetric strict monoidal 2-category $\mathcal M$, it induces a symmetric strict monoidal 2-functor
\begin{equation}\label{eq:Vp0}
\xymatrix{
\mathsf{Mnd}_{\mathsf{op}}(\mathcal M_{p0}) \cong
\mathsf{Mnd}_{\mathsf{op}}(\mathcal M)_{p0} \ar[r]^-{\mathsf V_{p0}} &
\mathcal M_{p0}}
\end{equation}
for any non-negative integer $p$.
Its object map sends a $(p,0)$-oidal monad to a $p$-oidal object in $\mathcal M$.

\section{$p$-oidal Eilenberg--Moore objects}

In this section we consider a symmetric strict monoidal 2-category $\mathcal M$ and a non-negative integer $p$. Then we describe a setting in which the $p$-oidal structure of the base object $A$ of any $(p,0)$-oidal monad $(A,t)$ lifts to the Eilenberg--Moore object $A^t$ along the 1-cell $f^t:A\to A^t$. (A 1-cell $\overline h:A^t\to B^s$ is said to be a lifting of 1-cell $h:A\to B$ along the left adjoint 1-cells $f^t:A\to A^t$ and $f^s:B\to B^s$ if $f^s.h=\overline h.f^t$.)
The results of this section extend \cite[Theorem 7.1]{AguiarHaimLopezFranco} and give it a different explanation.

Let $\mathcal M$ be a 2-category which admits Eilenberg--Moore construction. For any 1-cell $(h,\chi):(A,t) \to (B,s)$ in $\mathsf{Mnd}_{\mathsf {op}}(\mathcal M)$, consider the reflexive pair of morphisms in $\mathcal M(A^t,B^s)$,
\begin{equation} \label{eq:Linton}
\xymatrix@C=25pt@R=1pt{
& f^s.h.u^t.f^t.u^t \ar@/^.5pc/[rd]^-{1.1.1.\epsilon^t} \\
f^s.h.t.u^t \ar@{=}@/^.5pc/[ru] \ar@/_.5pc/[rd]_-{1.\chi.1} &&
f^s.h.u^t \ar@{-->}[rr]^-{\pi(h,\chi)} &&
\mathsf V(h,\chi). \\
& f^s.s.h.u^t =
f^s.u^s.f^s.h.u^t \ar@/_.5pc/[ru]_-{\epsilon^s.1.1.1}}
\end{equation}
(A common section of the parallel morphisms is $1.1.\eta^t.1$ in terms of the unit $\eta^t$ of the monad $(A,t)$.) Assuming that their coequalizer exists for all 1-cells $(h,\chi)$ in $\mathsf{Mnd}_{\mathsf {op}}(\mathcal M)$, and it is preserved by the horizontal composition on either side with any 1-cell in $\mathcal M$, we construct a 2-functor $\mathsf V:\mathsf{Mnd}_{\mathsf{op}}(\mathcal M)\to \mathcal M$ whose action on any 1-cell $(h,\chi)$ is given by the coequalizer in \eqref{eq:Linton}. Coequalizers of this form were studied in \cite{Linton}.

\begin{lemma} \label{lem:V-hom} 
Consider a 2-category $\mathcal M$ which admits Eilenberg--Moore construction and take any monads $(A,t)$ and $(B,s)$ in $\mathcal M$. If the coequalizer \eqref{eq:Linton} exists for all objects of $\mathsf{Mnd}_{\mathsf {op}}(\mathcal M)((A,t),(B,s))$, then there is a functor $\mathsf{Mnd}_{\mathsf {op}}(\mathcal M)((A,t),(B,s))\to \mathcal M(A^t,B^s)$ whose object map is given by the coequalizer in \eqref{eq:Linton}. 
\end{lemma}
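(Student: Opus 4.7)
The plan is to define $\mathsf V$ on morphisms by iterated use of the universal property of the coequalizer \eqref{eq:Linton}. Unravelling $\mathsf{Mnd}_{\mathsf{op}}(\mathcal M)=\mathsf{Mnd}(\mathcal M_{\mathsf{op}})_{\mathsf{op}}$ as in Example \ref{ex:Mnd}, a 1-cell $(h,\chi):(A,t)\to(B,s)$ consists of $h:A\to B$ with a 2-cell $\chi:h.t\to s.h$ subject to the monad morphism laws read off in $\mathcal M_{\mathsf{op}}$, while a 2-cell $\omega:(h,\chi)\to(h',\chi')$ is a 2-cell $\omega:h\to h'$ in $\mathcal M$ satisfying the monad transformation condition $(1.\omega)\updot\chi=\chi'\updot(\omega.1)$. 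The object map $(h,\chi)\mapsto \mathsf V(h,\chi)$ is already given by the hypothesis, along with its coequalizing projection $\pi(h,\chi):f^s.h.u^t\to \mathsf V(h,\chi)$.

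For the morphism map, given such an $\omega$ I would define $\mathsf V(\omega):\mathsf V(h,\chi)\to \mathsf V(h',\chi')$ as the unique 2-cell characterised by $\mathsf V(\omega)\updot\pi(h,\chi)=\pi(h',\chi')\updot(1.\omega.1)$, where $(1.\omega.1):f^s.h.u^t\to f^s.h'.u^t$ is the evident whiskering. For this factorisation to exist one must check that the right-hand side coequalizes the parallel 2-cells of \eqref{eq:Linton}; equivalently that $\pi(h',\chi')\updot(1.\omega.1)\updot(1.1.1.\epsilon^t)$ and $\pi(h',\chi')\updot(1.\omega.1)\updot(\epsilon^s.1.1.1)\updot(1.\chi.1)$ agree.

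I expect this fork equation to be the only substantive step. The argument is by interchange. In the first composite, $\omega$ acts on the factor $h$ and is horizontally disjoint from $\epsilon^t$ (which acts on $f^t.u^t$), so it slides past $\epsilon^t$ to yield $(1.1.1.\epsilon^t)\updot(1.\omega.1.1)$; the defining coequalizer relation for $(h',\chi')$ then rewrites the new leading $(1.1.1.\epsilon^t)$ as $(\epsilon^s.1.1.1)\updot(1.\chi'.1)$ after post-composition by $\pi(h',\chi')$. In the second composite, $\omega$ slides past the disjoint $\epsilon^s$ and vertical composition with $(1.\chi.1)$ produces $(\epsilon^s.1.1.1)\updot 1.((1.\omega)\updot\chi).1$; the compatibility of $\omega$ then replaces $(1.\omega)\updot\chi$ by $\chi'\updot(\omega.1)$, giving $(\epsilon^s.1.1.1)\updot(1.\chi'.1)\updot(1.\omega.1.1)$. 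The two strands meet at the same 2-cell, so the fork equation holds. This is precisely the step where the monad transformation axiom on $\omega$ enters.

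Functoriality is then automatic from the uniqueness of coequalizer factorisations. For the identity 2-cell of $(h,\chi)$, the defining equation is solved by $1_{\mathsf V(h,\chi)}$, forcing $\mathsf V(1)=1$. For a vertical composite $\omega'\updot\omega$, interchange yields $(1.\omega'.1)\updot(1.\omega.1)=1.(\omega'\updot\omega).1$, so $\mathsf V(\omega')\updot\mathsf V(\omega)$ satisfies the equation defining $\mathsf V(\omega'\updot\omega)$ and the two must coincide.
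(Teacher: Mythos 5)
Your proposal is correct and follows essentially the same route as the paper: the paper defines $\mathsf V\omega$ by the universal property of the coequalizer applied to a serially commutative diagram built from the whiskering $1.\omega.1$, which is exactly the fork equation you verify via interchange and the monad transformation axiom, with functoriality likewise obtained from uniqueness of the factorisation. The only difference is that the paper records the serial commutativity as a diagram without spelling out the equational check you perform.
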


\begin{proof}
Take a morphism $\omega:(h,\chi) \to (k,\kappa)$ in $\mathsf{Mnd}_{\mathsf {op}}(\mathcal M)((A,t),(B,s))$. By the universality of the coequalizer in the top row of the serially commutative diagram
$$
\xymatrix@C=25pt@R=1pt{
& f^s.h.u^t.f^t.u^t \ar@/^.5pc/[rd]^-{1.1.1.\epsilon^t} \\
f^s.h.t.u^t \ar@{=}@/^.5pc/[ru] \ar@/_.5pc/[rd]_-{1.\chi.1} \ar[dddddd]_-{1.\omega.1.1} &&
f^s.h.u^t \ar[rr]^-{\pi(h,\chi)} \ar[dddddd]^-{1.\omega.1} &&
\mathsf V(h,\chi) \ar@{-->}[dddddd]^-{\mathsf V\omega} \\
& f^s.s.h.u^t =
f^s.u^s.f^s.h.u^t \ar@/_.5pc/[ru]_-{\epsilon^s.1.1.1}\\
\\
\\
\\
& f^s.k.u^t.f^t.u^t \ar@/^.5pc/[rd]^-{1.1.1.\epsilon^t} \\
f^s.k.t.u^t \ar@{=}@/^.5pc/[ru] \ar@/_.5pc/[rd]_-{1.\kappa.1} &&
f^s.k.u^t \ar[rr]^-{\pi(k,\kappa)} &&
\mathsf V(k,\kappa) \\
& f^s.s.k.u^t =
f^s.u^s.f^s.k.u^t \ar@/_.5pc/[ru]_-{\epsilon^s.1.1.1}}
$$
the image of $\omega$ under the desired functor $\mathsf V$ occurs in the right vertical. Functoriality follows by construction.
\end{proof}

In order to interpret the functors of Lemma \ref{lem:V-hom} as the hom functors of a 2-functor, we need to see their compatibility with the horizontal composition and the identity 1-cells. The proof of this rests on the following.

\begin{lemma} \label{lem:V}
Consider a 2-category $\mathcal M$ which admits Eilenberg--Moore construction. Assume that the coequalizer \eqref{eq:Linton} exists for some 1-cell $(h,\chi):(A,t)\to (B,s)$ in $\mathsf{Mnd}_{\mathsf {op}}(\mathcal M)$, and it is preserved by the horizontal composition on either side with any 1-cell in $\mathcal M$. Then 
$$
(\xymatrix@C=35pt{
f^s.h.u^t.f^t \ar[r]^-{\pi(h,\chi).1} &
\mathsf V(h,\chi).f^t}) =
(\xymatrix{
f^s.h.t\ar[r]^-{1.\chi} &
f^s.s.h=f^s.u^s.f^s.h \ar[r]^-{\epsilon^s.1.1} &
f^s.h}).
$$
\end{lemma}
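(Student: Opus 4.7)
The plan is to post-compose the coequalizer diagram \eqref{eq:Linton} with $f^t$ on the right. By the hypothesis that horizontal composition preserves this coequalizer, the 2-cell $\pi(h,\chi).1_{f^t}$ is itself a coequalizer of the post-composed parallel pair. Using $u^t.f^t=t$ and $1.\epsilon^t.1=\mu^t$, this pair simplifies to $d_0:=1_{f^s.h}.\mu^t$ and $d_1:=(\epsilon^s.1.1.1)\updot(1_{f^s}.\chi.1_t)$, two 2-cells $f^s.h.t.t\rightrightarrows f^s.h.t$. I will then show that the right-hand side $\pi':=(\epsilon^s.1.1)\updot(1_{f^s}.\chi):f^s.h.t\to f^s.h$ is a split (hence absolute) coequalizer of this pair, and conclude by uniqueness of coequalizers.

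The candidate splitting uses the 2-cells $r:=1_{f^s.h}.\eta^t:f^s.h\to f^s.h.t$ (intended as a section of $\pi'$) and $\sigma:=1_{f^s.h.t}.\eta^t:f^s.h.t\to f^s.h.t.t$ (intended as a section of $d_0$). Three relations must be verified. First, $d_0\updot\sigma=1_{f^s.h.t}$ reduces immediately to the monad unitality $\mu^t\updot(1_t.\eta^t)=1_t$. Second, $\pi'\updot r=1_{f^s.h}$ follows by applying first the unitality $\chi\updot(1_h.\eta^t)=\eta^s.1_h$ of the monad morphism $(h,\chi)$ in $\mathsf{Mnd}_{\mathsf{op}}(\mathcal M)$, and then the triangle identity $(\epsilon^s.1_{f^s})\updot(1_{f^s}.\eta^s)=1_{f^s}$ of the adjunction $f^s\dashv u^s$. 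Third, $d_1\updot\sigma=r\updot\pi'$ follows purely from interchange: moving $\eta^t$ past $\chi$ via the identity $(\chi.1_t)\updot(1_{h.t}.\eta^t)=(1_{s.h}.\eta^t)\updot\chi$ rewrites both composites as $(\epsilon^s.1_{f^s}.1_h.\eta^t)\updot(1_{f^s}.\chi)$.

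Once these three relations are in place, $\pi'$ is an absolute coequalizer of the parallel pair; since $\pi(h,\chi).1_{f^t}$ coequalizes the same pair, the universal property provides a canonical identification $\mathsf V(h,\chi).f^t=f^s.h$ under which the two 2-cells coincide, which is exactly the statement of the lemma. I expect the main obstacle to be the interchange-law bookkeeping in verifying the third relation; conceptually, though, the whole computation is just the familiar observation, transported into a formal 2-category, that Beck's monadicity coequalizer splits after being pulled back along the left adjoint.
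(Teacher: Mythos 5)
Your proposal is correct and follows essentially the same route as the paper: post-compose the coequalizer \eqref{eq:Linton} with $f^t$, use $u^t.f^t=t$ and $1.\epsilon^t.1=\mu^t$ to identify the resulting parallel pair, and recognize $(\epsilon^s.1.1)\updot(1.\chi)$ as a split coequalizer of it, with splittings $1.1.\eta^t$ and $1.1.1.\eta^t$. The only difference is that you spell out the three splitting identities, which the paper leaves as an unproved ``observation''.
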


\begin{proof}
By assumption $\mathsf V(h,\chi).f^t$ appears in the coequalizer
$$
\xymatrix@C=25pt@R=1pt{
& f^s.h.t \ar@{=}@/^.5pc/[rd] \\
f^s.h.t.t \ar@/^.5pc/[ru]^-{1.1.\mu^t} \ar@/_.5pc/[rd]_-{1.\chi.1} &&
f^s.h.u^t.f^t \ar[rr]^-{\pi(h,\chi).1}  &&
\mathsf V(h,\chi).f^t  \\
& f^s.s.h.t =
f^s.u^s.f^s.h.u^t.f^t \ar@/_.5pc/[ru]_-{\epsilon^s.1.1.1.1}}
$$
(where $\mu^t$ denotes the multiplication of the monad $(A,t)$). So the claim follows by the observation that
$$
\xymatrix@C=25pt@R=1pt{
& f^s.h.t \ar@{=}@/^.5pc/[rd] \\
f^s.h.t.t \ar@/^.5pc/[ru]^-{1.1.\mu^t} \ar@/_.5pc/[rd]^-{1.\chi.1} &&
f^s.h.t \ar[r]^-{1.\chi}   \ar@{-->}@/^4pc/[ll]^-{1.1.1.\eta^t}  &
f^s.s.h=f^s.u^s.f^s.h \ar[r]^-{\epsilon^s.1.1} &
f^s.h \ar@{-->}@/^4pc/[ll]^-{1.1.\eta^t} \\
& f^s.s.h.t =
f^s.u^s.f^s.h.t \ar@/_.5pc/[ru]^-{\epsilon^s.1.1.1\quad  }}
$$
is a split coequalizer (where $\eta^t$ is the unit of the monad $t$).
\end{proof}

\begin{proposition} \label{prop:V}
Consider a 2-category $\mathcal M$ which admits Eilenberg--Moore construction. If the coequalizer \eqref{eq:Linton} exists for all 1-cells in $\mathsf{Mnd}_{\mathsf {op}}(\mathcal M)$, and it is preserved by the horizontal composition on either side with any 1-cell in $\mathcal M$, then the following hold.

(1) There is a 2-functor $\mathsf V:\mathsf{Mnd}_{\mathsf{op}}(\mathcal M) \to \mathcal M$ which sends a monad $(A,t)$ to the Eilenberg--Moore object $A^t$ and whose hom functors are defined as in Lemma \ref{lem:V-hom}.

(2) If moreover $\mathcal M$ is a strict monoidal 2-category which admits monoidal Eilenberg--Moore construction, then the 2-functor $\mathsf V$ of part (1) is strict monoidal. 

(3) If in addition $\mathcal M$ has a symmetry then the 2-functor $\mathsf V$ of part (1) is symmetric as well.
\end{proposition}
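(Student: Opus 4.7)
The plan is to build $\mathsf V$ by taking the object map $(A,t)\mapsto A^t$ together with the hom functors supplied by Lemma \ref{lem:V-hom}, and then to upgrade this data to a (symmetric) strict monoidal 2-functor by three successive checks: preservation of identity 1-cells, preservation of horizontal composition of 1-cells and 2-cells, and compatibility with the (symmetric strict) monoidal structure. For the identity 1-cell on $(A,t)$, the parallel pair in \eqref{eq:Linton} at $(h,\chi)=(1_A,1_t)$ becomes $1.\epsilon^t \colon f^t.u^t.f^t.u^t \to f^t.u^t$ and $\epsilon^t.1$, with common section $f^t.\eta^t.u^t$; this is therefore a split coequalizer, hence absolute, with coequalizer $\epsilon^t\colon f^t.u^t\to 1_{A^t}$. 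So $\mathsf V(1_{(A,t)})=1_{A^t}$ and moreover $\pi(1_{(A,t)}) = \epsilon^t$, consistently with the formula of Lemma \ref{lem:V} evaluated at the identity.

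For horizontal composition of 1-cells $(h,\chi)\colon(A,t)\to(B,s)$ and $(k,\kappa)\colon(B,s)\to(C,r)$, I would write $\mathsf V(k,\kappa).\mathsf V(h,\chi)$ as an iterated coequalizer. Preservation of \eqref{eq:Linton} on the left by $\mathsf V(k,\kappa)$ turns $1.\pi(h,\chi)$ into a coequalizer, while preservation on the right by $h.u^t$ of the coequalizer defining $\mathsf V(k,\kappa)$ identifies $\mathsf V(k,\kappa).f^s.h.u^t$ as the coequalizer of a parallel pair involving $f^r.k.h.u^t$. Lemma \ref{lem:V}, applied to $(k,\kappa)$, lets me rewrite the resulting maps in a common form, and a diagram chase using the compatibility law for the composite monad morphism $(k.h,(k.\chi)\updot(\kappa.h))$ shows that the two successive coequalizers combine into the single coequalizer that defines $\mathsf V((k,\kappa).(h,\chi))$. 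Horizontal composition of 2-cells is then handled by the same universality argument, with Lemma \ref{lem:V-hom} taking care of vertical composition.

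For part~(2) I would invoke Proposition \ref{prop:mon-EM}: under monoidal Eilenberg--Moore, $f^{ts}=f^tf^s$, $u^{ts}=u^tu^s$ and $\epsilon^{ts}=\epsilon^t\epsilon^s$ hold strictly, so the defining parallel pair for $\mathsf V((h,\chi) (g,\gamma))$ is literally the product of the defining pairs for $\mathsf V(h,\chi)$ and $\mathsf V(g,\gamma)$. Since horizontal composition, and hence $\otimes$, preserves the coequalizers \eqref{eq:Linton} in each argument by hypothesis, $\pi(h,\chi) \pi(g,\gamma)$ is the coequalizer, yielding $\mathsf V((h,\chi) (g,\gamma))=\mathsf V(h,\chi) \mathsf V(g,\gamma)$. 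Unit preservation $\mathsf V(1_{(I,1)})=1_I$ holds by the identity case, using that $I^1=I$. For part~(3), I would mimic the symmetry argument at the end of Proposition \ref{prop:mon-EM}: the universal property of the coequalizer defining $\mathsf V(\sigma_{A,B},1)$, combined with 2-naturality of $\sigma$ and the formula of Lemma \ref{lem:V}, uniquely identifies $\mathsf V(\sigma_{A,B},1)$ with $\sigma_{A^t,B^s}\colon A^tB^s\to B^sA^t$.

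The main obstacle is the functoriality of $\mathsf V$ on 1-cells: the two successive preservation moves have to be arranged so that the resulting parallel pair coincides exactly with the one defining $\mathsf V$ of the composite, and this is precisely the place where the assumption that \eqref{eq:Linton} is preserved by horizontal composition on \emph{both} sides is genuinely used. Once this step is in place, parts~(2) and (3) reduce to the monoidal and symmetric book-keeping already made available by Proposition \ref{prop:mon-EM}.
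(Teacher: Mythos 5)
Your overall architecture (identities, then composition via iterated coequalizers and Lemma \ref{lem:V}, then monoidality and symmetry from Proposition \ref{prop:mon-EM}) matches the paper's proof, and the composition and symmetry steps are handled essentially as in the paper. There is, however, a genuine gap in your treatment of identity 1-cells. The common section $1.\eta^t.1\colon f^t.u^t \to f^t.u^t.f^t.u^t$ of the parallel pair $(1.1.\epsilon^t,\ \epsilon^t.1.1)$ only makes that pair \emph{reflexive}; it does not make the fork
$f^t.u^t.f^t.u^t \rightrightarrows f^t.u^t \xrightarrow{\ \epsilon^t\ } 1$
a \emph{split} coequalizer. A splitting would require a 2-cell $1_{A^t}\to f^t.u^t$ section of $\epsilon^t$, and no such cell exists in general (the unit of the adjunction goes $1\to u^t.f^t$, on the other side). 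So "therefore a split coequalizer, hence absolute" is not justified, and with it the identification $\mathsf V(1_{(A,t)})=1_{A^t}$ is left unproved. The paper closes this gap by observing that the fork becomes split only after applying $u^t.(-)\colon \mathcal M(A^t,A^t)\to\mathcal M(A^t,A)$, and that this functor is monadic (it is, up to isomorphism, the forgetful functor of the monad $t.(-)$ on $\mathcal M(A^t,A)$); monadic functors create coequalizers of forks they send to split coequalizers, which is exactly what identifies $\epsilon^t$ as the coequalizer. Your step needs to be replaced by this (or an equivalent) argument.

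Two smaller remarks. First, in part (2) the inference "horizontal composition, and hence $\otimes$, preserves the coequalizers" is not a formal consequence of the stated hypothesis, which concerns the composition $.$ rather than the monoidal product; the paper also treats part (2) as immediate, so I only flag that the word "hence" is doing unearned work. Second, your identification $\pi(1_{(A,t)})=\epsilon^t$ is the correct target statement, and once the identity step is repaired by the monadicity argument the rest of your proof goes through as written.
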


\begin{proof}
(1) There is a monad $t.(-)$ on the category $\mathcal M(A^t,A)$ whose Eilenberg--Moore category is isomorphic to $\mathsf{Mnd}(\mathcal M)((A^t,1),(A,t))\cong \mathcal M(A^t,A^t)$; and whose forgetful functor differs by this isomorphism from 
\begin{equation} \label{eq:ut-}
u^t.(-):\mathcal M(A^t,A^t) \to \mathcal M(A^t,A).
\end{equation}
Thus  \eqref{eq:ut-} is monadic and therefore the coequalizers of those forks exist and are preserved by \eqref{eq:ut-} which are sent by \eqref{eq:ut-} to split coequalizers, see Proposition 3.5 on page 95 of \cite{BarrWells:TTT} or Theorem 4.4.4 in \cite{BorceuxII}. 

Now the functor \eqref{eq:ut-} sends the fork
\begin{equation} \label{eq:unit-coeq}
\xymatrix{
f^t.t.u^t=f^t.u^t.f^t.u^t
\ar@<2pt>[rr]^-{1.1.\epsilon^t} \ar@<-2pt>[rr]_-{\epsilon^t.1.1} &&
f^t.u^t \ar[r]^-{\epsilon^t} &
1}
\end{equation}
to the split coequalizer
$$
\xymatrix{
u^t.f^t.t.u^t=u^t.f^t.u^t.f^t.u^t
\ar@<2pt>[rr]^-{1.1.1.\epsilon^t} \ar@<-2pt>[rr]_-{1.\epsilon^t.1.1} &&
u^t.f^t.u^t \ar[r]^-{1.\epsilon^t} \ar@/^2pc/@{-->}[ll]^-{\eta^t.1.1.1}&
u^t  \ar@/^2pc/@{-->}[l]^-{\eta^t.1}}
$$ 
so that \eqref{eq:unit-coeq} is a coequalizer and thus $\mathsf V$ preserves identity 1-cells.

For the preservation of the horizontal composition recall that for arbitrary 1-cells $(h,\chi):(A,t) \to (B,s)$ and $(k,\kappa):(B,s) \to (C,z)$ in $\mathsf{Mnd}_{\mathsf{op}}(\mathcal M)$, $\mathsf V(k,\kappa).\mathsf V(h,\chi)$ and $\mathsf V((k,\kappa).(h,\chi))$ are defined as the coequalizers of the respective pairs of parallel morphisms in
$$
\xymatrix@C=15pt@R=1pt{
& \mathsf V(k,\kappa).f^s.h.u^t.f^t.u^t \ar@/^.5pc/[rd]^-{1.1.1.1.\epsilon^t} \\
\mathsf V(k,\kappa).f^s.h.t.u^t \ar@{=}@/^.5pc/[ru] \ar@/_.5pc/[rd]_(.3){1.1.\chi.1\ } 
\ar@{=}[dddddd] &&
\mathsf V(k,\kappa).f^s.h.u^t  \ar@{=}[dddddd] \\
& \mathsf V(k,\kappa).f^s.s.h.u^t =
\mathsf V(k,\kappa).f^s.u^s.f^s.h.u^t \ar@/_.5pc/[ru]_(.75){1.\epsilon^s.1.1.1} \\
\\
\\
\\
& f^z.k.h.u^t.f^t.u^t \ar@/^.5pc/[rd]^-{1.1.1.1.\epsilon^t} \\
f^z.k.h.t.u^t \ar@{=}@/^.5pc/[ru] \ar@/_.5pc/[rd]_-{1.1.\chi.1} &&
f^z.k.h.u^t  .\\
& f^z.k.s.h.u^t 
\raisebox{-6pt}{$\stackrel{\longrightarrow}{{}_{1.\kappa.1.1}}$} 
f^z.z.k.h.u^t = f^z.u^z.f^z.k.h.u^t
\ar@/_.5pc/[ru]_(.8){\epsilon^z.1.1.1.1}}
$$
The vertical equalities hold by Lemma \ref{lem:V}. The square with the lower ones of the parallel arrows commutes since the following diagram commutes and its dotted arrow denotes an epimorphism.
$$
\xymatrix@C=15pt@R=20pt{
\mathsf V (k,\kappa).f^s.u^s.f^s \ar[rrrr]^-{1.\epsilon^s.1} \ar@{=}[ddddd] &&&&
\mathsf V (k,\kappa).f^s \ar@{=}[ddddd] \\
& f^z.k.u^s.f^s.u^s.f^s \ar[rr]^-{1.1.1.\epsilon^s.1} \ar@{=}[d] \ar@{..>}[lu]_-{\pi(k,\kappa).1.1.1} &&
f^z.k.u^s.f^s \ar@{=}[d] \ar[ru]^-{\pi(k,\kappa).1} \\
\ar@{}[rd]|-{\textrm{Lemma }~\ref{lem:V}} & 
f^z.k.s.s \ar[rr]^-{1.1.\mu^s} \ar[d]^-{1.\kappa.1} &&
f^z.k.s \ar[d]_-{1.\kappa} \ar@{}[rd]|-{\textrm{Lemma }~\ref{lem:V}} \\
& f^z.z.k.s \ar[r]^-{1.1.\kappa} \ar@{=}[d] &
f^z.z.z.k \ar[r]^-{1.\mu^z.1} \ar@{=}[d] &
f^z.z.k \ar@{=}[d]  & \\
& f^z.u^z.f^z.k.s \ar[ld]^-{\epsilon^z.1.1.1} \ar[r]^-{\raisebox{8pt}{${}_{1.1.1.\kappa}$}} &
f^z.u^z.f^z.u^z.f^z.k \ar[d]^-{\epsilon^z.1.1.1.1} \ar[r]^-{\raisebox{8pt}{${}_{1.1.\epsilon^z.1.1}$}} &
f^z.u^z.f^z.k \ar[rd]_-{\epsilon^z.1.1} \\
f^z.k.s \ar[r]_-{1.\kappa} &
f^z.z.k \ar@{=}[r] &
f^z.u^z.f^z.k  \ar[rr]_-{\epsilon^z.1.1} &&
f^z.k}
$$
This also proves the equality 
\begin{equation} \label{eq:V-hori}
(\xymatrix@C=33pt{
\mathsf V(k,\kappa).f^s.h.u^t\ar[r]^-{1.\pi(h,\chi)} &
\mathsf V(k,\kappa).\mathsf V(h,\chi)}\!\!)=
(\xymatrix@C=55pt{
f^z.k.h.u^t \ar[r]^-{\pi((k,\kappa).(h,\chi))} &
\mathsf V((k,\kappa).(h,\chi))}\!\!).
\end{equation}

Part (2) is immediate. For part (3) recall that for the symmetry $\sigma$ of $\mathcal M$, the action of $\mathsf V$ on the symmetry $(\sigma,1):(AB,ts) \to (BA,st)$ of $\mathsf{Mnd}_{\mathsf{op}}(\mathcal M)$ is defined as the coequalizer of the morphisms in the top row of
$$
\xymatrix@C=25pt@R=1pt{
& f^sf^t.\sigma.u^tu^s.f^tf^s.u^tu^s \ar@/^.5pc/[rd]^-{1.1.1.\epsilon^t \epsilon^s} \\
f^sf^t.\sigma.ts.u^tu^s \ar@{=}@/^.5pc/[ru] \ar@{=}@/_.5pc/[rd] \ar@{=}[dddddd] &&
f^sf^t.\sigma.u^tu^s \ar@{=}[dddddd] \\
& f^sf^t.st.\sigma.u^tu^s =
f^sf^t.u^su^t.f^sf^t.\sigma.u^tu^s \ar@/_.5pc/[ru]_-{\epsilon^s\epsilon^t.1.1.1}\\
\\
\\
\\
& \sigma.f^tf^s.u^tu^s.f^tf^s.u^tu^s \ar@/^.5pc/[rd]^-{1.1.1.\epsilon^t \epsilon^s} \\
\sigma.f^tf^s.ts.u^tu^s \ar@{=}@/^.5pc/[ru] \ar@{=}@/_.5pc/[rd] &&
\sigma.f^tf^s.u^tu^s .\\
& \sigma.f^tf^s.ts.u^tu^s =
\sigma.f^tf^s.u^tu^s.f^tf^s.u^tu^s \ar@/_.5pc/[ru]_-{1.\epsilon^t\epsilon^s.1.1}}
$$
Both the vertical equalities and serial commutativity of the diagram follow by the 2-naturality of $\sigma$.
The bottom row is the image under $\sigma.(-)$ of parallel morphisms as in $\eqref{eq:unit-coeq}$ hence their coequalizer is $\sigma.1=\sigma$ as stated.
\end{proof}

In view of Lemma \ref{lem:V}, the action of $\mathsf V$ can be interpreted as a lifting along the left adjoint 1-cells of the kind $f^t:A\to A^t$. So from \eqref{eq:Vp0} we obtain the following.

\begin{corollary} \label{cor:Vlift}
In the setting of Proposition \ref{prop:V}~(3), the $p$-oidal structure of the base object $A$ of any $(p,0)$-oidal monad $(A,t)$ in $\mathcal M$ lifts to the Eilenberg-Moore object $A^t$ along the 1-cell $f^t:A \to A^t$.
\end{corollary}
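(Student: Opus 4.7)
The strategy is to recognise the corollary as a direct consequence of Proposition~\ref{prop:V}(3) combined with \eqref{eq:Vp0} and the lifting identity of Lemma~\ref{lem:V}; essentially no new work is required beyond assembling these pieces in the right order.

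First I would invoke Proposition~\ref{prop:V}(3) to obtain a symmetric strict monoidal 2-functor $\mathsf V:\mathsf{Mnd}_{\mathsf{op}}(\mathcal M)\to \mathcal M$ whose 0-cell map is the Eilenberg--Moore construction. Feeding this $\mathsf V$ into the procedure described at \eqref{eq:Vp0} then yields a symmetric strict monoidal 2-functor
\[
\mathsf V_{p0}\colon \mathsf{Mnd}_{\mathsf{op}}(\mathcal M_{p0}) \cong \mathsf{Mnd}_{\mathsf{op}}(\mathcal M)_{p0} \to \mathcal M_{p0}
\]
whose 0-cell map sends a $(p,0)$-oidal monad $(A,t)$ to a $p$-oidal object of $\mathcal M$ whose underlying 0-cell is $\mathsf V(A,t)=A^t$. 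This already delivers a $p$-oidal structure on the Eilenberg--Moore object $A^t$.

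It remains to identify this structure as a lifting along $f^t$ in the sense recalled at the start of the section. Each structure 1-cell of a pseudomonoid in $\mathsf{Mnd}_{\mathsf{op}}(\mathcal M)_{p0}$ is a monad morphism between iterated monoidal powers of $(A,t)$; for instance the binary multiplication 1-cell of a single pseudomonoid layer has the form $(m,\chi)\colon (AA,tt)\to (A,t)$ and the unit 1-cell has the form $(u,\chi_0)\colon (I,1)\to (A,t)$. The assumption of monoidal Eilenberg--Moore construction guarantees that the left adjoint accompanying each iterated monad $tt\cdots t$ on the corresponding product object is the iterated product $f^t f^t\cdots f^t$. Applying the strict monoidal 2-functor $\mathsf V$ to these monad morphisms and then specialising Lemma~\ref{lem:V} to each of them yields equalities such as
\[
\mathsf V(m,\chi).f^tf^t = f^t.m \qquad \text{and}\qquad \mathsf V(u,\chi_0) = f^t.u,
\]
which are precisely the defining equations for liftings of the pseudomonoid structure 1-cells along $f^t$. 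The 2-cell structure data (associators, unitors and the cross-compatibility 2-cells between the $p$ pseudomonoid layers) are then lifted by simply applying $\mathsf V$ to them; the required lifting equations for these 2-cells are automatic from the 2-functoriality of $\mathsf V$.

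The only genuinely substantive input is the content of Lemma~\ref{lem:V}: the Linton-type coequalizer defining $\mathsf V(h,\chi)$ becomes, after postcomposition with $f^t$, a split coequalizer whose value collapses to $f^s.h$. Once that identification is internalised, the corollary is a formal consequence of the chain of symmetric strict monoidal 2-functors already built up in the preceding sections, and no further calculations are required.
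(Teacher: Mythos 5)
Your proposal is correct and follows essentially the same route as the paper: the paper's proof consists precisely of observing that Lemma \ref{lem:V} lets one read the action of $\mathsf V$ as a lifting along the left adjoints $f^t$ (using monoidal Eilenberg--Moore to identify $f^{tt}$ with $f^tf^t$, etc.), and then citing \eqref{eq:Vp0}. Your extra unpacking of the structure 1-cells $(m,\chi)$ and $(u,\chi_0)$ is a faithful elaboration of the same argument rather than a different one.
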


\begin{remark} \label{rem:reflCat}
The premises of Proposition \ref{prop:V}~(3) hold in a suitable 2-subcategory of the 2-category $\mathsf{Cat}$ of Example \ref{ex:Cat} which we describe next.

Let $\mathcal M$ be the symmetric strict monoidal 2-subcategory of $\mathsf{Cat}$ whose 0-cells are those categories in which the coequalizers of reflexive pairs exist; whose 1-cells are those functors which preserve reflexive coequalizers; and whose 2-cells are all natural transformations between them.

For any monad $(A,t)$ in $\mathcal M$, the coequalizer of any reflexive pair in $A^t$ exists and it is preserved by the forgetful functor $A^t\to A$, see \cite[Corollary 3]{Linton} or \cite[Proposition 4.3.2]{BorceuxII}. Since the forgetful functor is conservative, it also reflects the coequalizers of reflexive pairs.
Then  if some functor $h:A\to B$ preserves reflexive coequalizers, then so does $h.u^t=u^s.\mathsf H(h,\chi)$ for any monad functor $(h,\chi):(A,t) \to (B,s)$ and the Eilenberg--Moore 2-functor $\mathsf H$. So since $u^s$ reflects reflexive coequalizers, $\mathsf H(h,\chi):A^t\to A^s$ preserves them. Because in this way $\mathcal M$ is closed under the monoidal Eilenberg--Moore construction of $\mathsf{Cat}$, it admits monoidal Eilenberg--Moore construction itself.

Evaluating the parallel natural transformations of \eqref{eq:Linton} at an arbitrary $t$-algebra we get a reflexive pair in $B^s$. Their coequalizer exists by the considerations in the previous paragraph and it is evidently preserved by any 1-cell in $\mathcal M$. Then also the coequalizer \eqref{eq:Linton} exists and it is preserved by the horizontal composition. 
\end{remark}

Applying Corollary \ref{cor:Vlift} to the 2-category of Remark \ref{rem:reflCat},  we obtain an alternative proof of related  \cite[Theorem 2.6.4]{Seal} and \cite[Theorem 4.14]{AguiarHaimLopezFranco}; as well as of \cite[Theorem 7.1]{AguiarHaimLopezFranco}.


\section{Multimonoidal structures in double categories}
\label{sec:Dpq}

For a study of the lifting of the $(p+q)$-oidal structure of the base object of a $(p,q)$-oidal monad in a symmetric strict monoidal 2-category, in the case when both $p$ and $q$ are strictly positive integers, we leave the realm of 2-categories and operate, more generally, with double categories. 
Therefore this section begins with the introduction of  a 2-category $\mathsf{sm}\mbox{-}\mathsf{DblCat}$ of symmetric strict monoidal double categories. Its choice is motivated by two requirements. On one hand, it should allow for the interpretation of the symmetry of a symmetric strict monoidal double category in \cite[Definition 5]{BruniMeseguerMontanari} as a suitable 2-cell in the 2-category of double categories. On the other hand, even more importantly, a definition of $\mathsf{sm}\mbox{-}\mathsf{DblCat}$ would serve our purposes if it admits several 2-functors constructed in this section and the next one.

In Example \ref{ex:Dpq} we introduce 2-endofunctors $(-)_{01}$ and $(-)_{10}$ on $\mathsf{sm}\mbox{-}\mathsf{DblCat}$; and show that they commute up-to 2-natural isomorphism.
Then iterating them in an arbitrary order $q$ and $p$ times, respectively, we obtain a 2-endofunctor $(-)_{pq}$ on $\mathsf{sm}\mbox{-}\mathsf{DblCat}$. The 0-cells of $\mathbb D_{pq}$, for any symmetric strict monoidal double category $\mathbb D$, can be interpreted as $(p,q)$-oidal objects in $\mathbb D$.

Ehresmann's {\em square} or {\em quintet} construction \cite{Ehresmann} yields a 2-functor $\Sqr :\mathsf{sm}\mbox{-}\mathsf{2Cat} \to \mathsf{sm}\mbox{-}\mathsf{DblCat}$. Together with the 2-functor $(-)_{pq}:\mathsf{sm}\mbox{-}\mathsf{2Cat} \to \mathsf{sm}\mbox{-}\mathsf{2Cat}$ of Example \ref{ex:Mpq}~(4) for any non-negative integers $p$ and $q$, it fits in the commutative diagram
$$
\xymatrix{
\mathsf{sm}\mbox{-}\mathsf{2Cat} \ar[r]^-{\Sqr } \ar[d]_-{(-)_{pq}} &
\mathsf{sm}\mbox{-}\mathsf{DblCat} \ar[d]^-{(-)_{pq}} \\
\mathsf{sm}\mbox{-}\mathsf{2Cat} \ar[r]_-{\Sqr }  &
\mathsf{sm}\mbox{-}\mathsf{DblCat}. }
$$
The 2-functors around it take a symmetric strict monoidal 2-category $\mathcal M$ to the symmetric strict monoidal double category $\Sqr (\mathcal M)_{pq} \cong \Sqr (\mathcal M_{pq})$. Thus the  $(p,q)$-oidal objects in $\Sqr (\mathcal M)$ are the same as the $(p+q)$-oidal objects of $\mathcal M$.

We begin with quickly recalling some basic definitions. More details can be found e.g. in \cite{BruniMeseguerMontanari}.

\begin{definition} \label{def:doublecat}
A {\em double category} is an internal category in the category of small categories and functors. Thus a double category $\mathbb D$ consists of a category of objects $V$, a category of morphisms $\mathcal V$ and structure functors 
$$
\xymatrix@C=40pt{
V \ar[r]|-{\,i^\hori\,} &
\mathcal V \ar@<-5pt>[l]_-{s^\hori} \ar@<5pt>[l]^-{t^\hori} &
\mathcal V \times_V \mathcal V \ar[l]_{c^\hori} }
$$
which are subject to the usual axioms of internal category.
\end{definition}

The objects of $V$ are called the {\em 0-cells} or {\em objects} of $\mathbb D$ and the morphisms of $V$ are called the {\em vertical 1-cells} of $\mathbb D$. The objects of $\mathcal V$ are called the {\em horizontal 1-cells} of $\mathbb D$ and the morphisms of $\mathcal V$ are called the {\em 2-cells} of $\mathbb D$. The compositions of both categories $V$ and $\mathcal V$ are called the {\em vertical composition} (of 1-cells and 2-cells, respectively).The functor $c^\hori$ is called the {\em horizontal composition}. A general 2-cell is depicted as
$$
\xymatrix{
s^\verti (s^\hori \Theta) =s^\hori (s^\verti \Theta) \ar[r]^-{s^\verti \Theta} \ar@<-20pt>[d]_-{s^\hori \Theta} \ar@{}[rd]|-{\Longdownarrow\Theta} &
t^\hori (s^\verti \Theta) =s^\verti (t^\hori \Theta)\ar@<20pt>[d]^-{t^\hori \Theta} \\
t^\verti (s^\hori \Theta)=s^\hori (t^\verti \Theta) \ar[r]_-{t^\verti \Theta} &
t^\hori (t^\verti \Theta)=t^\verti (t^\hori \Theta)}
$$
where $s^\verti$ denotes the source maps, and $t^\verti$ denotes the target maps of both categories $V$ and $\mathcal V$. In this graphical notation, vertical composition is denoted by vertical juxtaposition and horizontal composition is denoted by horizontal juxtaposition.

In Definition \ref{def:doublecat} the roles of the vertical and horizontal structures turn out to be symmetric. This leads to a symmetric description of the same double category $\mathbb D$. It can be seen equivalently as an internal category in the category of small categories and functors with another category of objects $H$ and another category of morphisms $\mathcal H$. In $H$ the objects are again the 0-cells of $\mathbb D$ but now the morphisms are the horizontal 1-cells. The objects of $\mathcal H$ are the vertical 1-cells and the morphisms are the 2-cells again. The compositions in $H$ and $\mathcal H$ are provided by the horizontal composition of $\mathbb D$. In this description the structure functors of the internal category are
$$
\xymatrix@C=40pt{ 
H \ar[r]|-{\,i^\verti\,} &
\mathcal H \ar@<-5pt>[l]_-{s^\verti} \ar@<5pt>[l]^-{t^\verti} &
\mathcal H \times_H \mathcal H .\ar[l]_{c^\verti} }
$$
Here the functor $i^\verti$ sends any object $A$ of $H$ (equivalently, of $V$) to the corresponding identity morphism  $1:A\to A$ in $V$ (regarded now as an object of $\mathcal H$) and it sends a horizontal morphism $h$ to the identity morphism $h\to h$ in $\mathcal V$ (regarded now as a non-identity morphism in $\mathcal H$).

Any double category $\mathbb D$ contains two 2-categories. In the {\em horizontal 2-category} $\mathsf{Hor}(\mathbb D)$ the 0-cells are the 0-cells, and the 1-cells are the horizontal 1-cells of $\mathbb D$. The 2-cells are those 2-cells in $\mathbb D$ which are taken by $s^\hori$ and $t^\hori$ to identity vertical 1-cells. Both horizontal and vertical compositions are the restrictions of the respective composition in $\mathbb D$.
Symmetrically, in the {\em vertical 2-category} $\mathsf{Ver}(\mathbb D)$ the 0-cells are the 0-cells, and the 1-cells are the vertical 1-cells of $\mathbb D$. The 2-cells are those 2-cells in $\mathbb D$ which are taken by $s^\verti$ and $t^\verti$ to identity horizontal 1-cells. In this 2-category the horizontal composition is the restriction of the vertical composition of $\mathbb D$ and vertical composition is the restriction of the horizontal composition of $\mathbb D$.

\begin{definition} \label{def:doublefunctor}
A {\em double functor} is an internal functor in the category of small categories and functors. Thus a double functor $\mathbf F:\mathbb D \to \mathbb D'$ consists of compatible functors $(V\to V',\mathcal V \to \mathcal V')$. Equivalently, it consists of compatible functors $(H\to H',\mathcal H \to \mathcal H')$.
\end{definition}

The object maps of the functors $V\to V'$ and $H\to H'$ in Definition \ref{def:doublefunctor} are equal. The morphism map of the functor $V\to V'$ is equal to the object map of the functor $\mathcal H \to \mathcal H'$ and the morphism map of the functor $H\to H'$ is equal to the object map of the functor $\mathcal V \to \mathcal V'$. The morphism maps of the functors $\mathcal V \to \mathcal V'$ and $\mathcal H \to \mathcal H'$ are equal again.
Any double functor $\mathbf F$ induces 2-functors $\mathsf{Hor}(\mathbf F)$ and $\mathsf{Ver}(\mathbf F)$ between the horizontal and vertical 2-categories, respectively, which we call the horizontal and the vertical 2-functors, respectively.

Between double functors $\mathbf F,\mathbf G:\mathbb D \to \mathbb D'$, there are two symmetric variants of internal natural transformations.
A {\em horizontal transformation} $x:\mathbf F \to \mathbf G$ is an internal natural transformation in the category of small categories and functors from $(\mathbf F:V\to V',\mathbf F:\mathcal V\to \mathcal V')$ to $(\mathbf G:V\to V',\mathbf G:\mathcal V\to \mathcal V')$. Thus it is a functor $V \to \mathcal V'$ sending any vertical 1-cell $f:A\to B$ to a 2-cell 
$$
\xymatrix{
\mathbf F A \ar[r]^-{x_A} \ar[d]_-{\mathbf F f}  \ar@{}[rd]|-{\Longdownarrow{x_f}}&
\mathbf G A \ar[d]^-{\mathbf G f} \\ 
\mathbf F B \ar[r]_-{x_B} & 
\mathbf G B}
$$
subject to the naturality condition 
\begin{equation} \label{eq:vert}
\xymatrix{
\mathbf F A \ar[r]^-{\mathbf F l} \ar[d]_-{\mathbf F f} \ar@{}[rd]|-{\Longdownarrow{\mathbf F \omega}} &
\mathbf F C \ar[d]|-{\mathbf F g} \ar[r]^-{x_C} \ar@{}[rd]|-{\Longdownarrow{x_g}} &
\mathbf G C \ar[d]^-{\mathbf G g} \\
\mathbf F B \ar[r]_-{\mathbf F k} &
\mathbf F D \ar[r]_-{x_D} & 
\mathbf G D} 
\ \raisebox{-17pt}{$=$} \ 
\xymatrix{
\mathbf F A \ar[r]^-{x_A} \ar[d]_-{\mathbf F f}  \ar@{}[rd]|-{\Longdownarrow{x_f}} &
\mathbf G A \ar[d]|-{\mathbf G f} \ar[r]^-{\mathbf G l} \ar@{}[rd]|-{\Longdownarrow{\mathbf G \omega}}  &
\mathbf G C \ar[d]^-{\mathbf G g} \\ 
\mathbf F B \ar[r]_-{x_B} & 
\mathbf G B \ar[r]_-{\mathbf G k} &
\mathbf G D}
\ \raisebox{-17pt}{for all} \ 
\xymatrix{
A \ar[r]^-l \ar[d]_-f \ar@{}[rd]|-{\Longdownarrow\omega} &
C \ar[d]^-g  \\
B \ar[r]_-k &
D }
\quad \raisebox{-17pt}{in $\mathbb D$.}
\end{equation}
For any horizontal transformation $x:\mathbf F \to \mathbf G$, the components $x_A:\mathbf F A\to \mathbf G A$ constitute a 2-natural transformation $\mathsf{Hor}(x): \mathsf{Hor}(\mathbf F) \to \mathsf{Hor}(\mathbf G)$.

There is then a 2-category in which  the 0-cells are the double categories, the 1-cells are the double functors and the 2-cells are the horizontal transformations. The horizontal composition is induced by the consecutive application of double functors and the vertical composition is induced by the horizontal composition in the target double category. We may see $\mathsf{Hor}$ as a 2-functor from this 2-category to $\mathsf{2Cat}$. 

Symmetrically, a {\em vertical transformation} $y:\mathbf F \to \mathbf G$ is an internal natural transformation in the category of small categories and functors from $(\mathbf F:H\to H',\mathbf F:\mathcal H\to \mathcal H')$ to $(\mathbf G:H\to H',\mathbf G:\mathcal H\to \mathcal H')$. Thus it is a functor $H \to \mathcal H'$ sending any horizontal 1-cell $l:A\to C$ to a 2-cell 
$$
\xymatrix{
\mathbf F A \ar[r]^-{\mathbf F l} \ar[d]_-{y_A} \ar@{}[rd]|-{\Longdownarrow{y_l}}&
\mathbf F C \ar[d]^-{y_C} \\
\mathbf G A \ar[r]_-{\mathbf G l} &
\mathbf G C}
$$
subject to the naturality condition 
\begin{equation} \label{eq:hor}
\xymatrix{
\mathbf F A \ar[r]^-{\mathbf F l} \ar[d]_-{\mathbf F f} \ar@{}[rd]|-{\Longdownarrow{\mathbf F \omega}} &
\mathbf F C \ar[d]^-{\mathbf F g} \\
\mathbf F B \ar[r]^-{\mathbf F k} \ar[d]_-{y_B} \ar@{}[rd]|-{\Longdownarrow{y_k}} &
\mathbf F D  \ar[d]^-{y_D} \\
\mathbf G B \ar[r]_-{\mathbf G k} &
\mathbf G D}
\quad \raisebox{-40pt}{$=$} \quad
\xymatrix{
\mathbf F A \ar[r]^-{\mathbf F l} \ar[d]_-{y_A} \ar@{}[rd]|-{\Longdownarrow{y_l}} &
\mathbf F C \ar[d]^-{y_C} \\
\mathbf G A \ar[d]_-{\mathbf G f} \ar[r]^-{\mathbf G l} \ar@{}[rd]|-{\Longdownarrow{\mathbf G \omega}} &
\mathbf G C \ar[d]^-{\mathbf G g} \\
\mathbf G B \ar[r]_-{\mathbf G k} &
\mathbf G D}
\quad \raisebox{-40pt}{for all} \quad
\raisebox{-20pt}{$\xymatrix{
A \ar[r]^-l \ar[d]_-f \ar@{}[rd]|-{\Longdownarrow\omega} &
C \ar[d]^-g  \\
B \ar[r]_-k &
D }$}
\quad \raisebox{-40pt}{in $\mathbb D$.}
\end{equation}
For any vertical transformation $y:\mathbf F \to \mathbf G$, the components $y_A:\mathbf F A\to \mathbf G A$ constitute a 2-natural transformation $\mathsf{Ver}(y): \mathsf{Ver}(\mathbf F) \to \mathsf{Ver}(\mathbf G)$.

There is then a 2-category of  double categories, double functors and vertical transformations, and a 2-functor $\mathsf{Ver}$ from it to $\mathsf{2Cat}$.

The notion of double natural transformation, which is  relevant for our purposes, symmetrically combines vertical and horizontal transformations. It is a particular case of {\em generalized natural transformations} in \cite{BruniMeseguerMontanari}.

\begin{definition}\label{def:dbl-nattr}
For double functors $\mathbf F,\mathbf G:\mathbb D \to \mathbb D'$, a {\em double natural transformation} $\mathbf F \to \mathbf G$ consists of
\begin{itemize}
\item a horizontal transformation $x:\mathbf F \to \mathbf G$
\item a vertical transformation $y:\mathbf F \to \mathbf G$
\vspace{-43pt}

\item a map $\theta$ associating to each 0-cell $A$ of $\mathbb D$ a 2-cell 
\raisebox{40pt}{$
\xymatrix@R=26pt{
\mathbf F A \ar[r]^-{x_A}  \ar[d]_-{y_A} \ar@{}[rd]|-{\Longdownarrow{\theta_A}}&
\mathbf G A \ar@{=}[d] \\
\mathbf G A \ar@{=}[r] &
\mathbf G A}$} 
in $\mathbb D'$
\end{itemize} 
such that $\theta$ gives rise to natural transformation from the functor $x:V \to \mathcal V'$ to the functor sending a vertical 1-cell $f:A \to B$ to the identity morphism $1:\mathbf Gf\to \mathbf Gf$ in $\mathcal H'$ (regarded as a non-identity morphism of $\mathcal V'$); as well as a natural transformation from the functor $H \to \mathcal H'$ sending a horizontal 1-cell $l:A\to C$ to the identity morphism $1:\mathbf Gl\to \mathbf Gl$ in $\mathcal V'$ (regarded as a non-identity morphism of $\mathcal H'$) to $y$. Equivalently, $\theta$ is a {\em modification} in the sense of \cite[Section 1.6]{GrandisPare}. That is, in addition to the conditions \eqref{eq:vert} and \eqref{eq:hor}, also the further naturality conditions
\begin{equation}\label{eq:doub}
\xymatrix@C=18pt@R=24pt{
\mathbf F A \ar[r]^-{x_A} \ar[d]_-{\mathbf F f} \ar@{}[rd]|-{\Longdownarrow{x_f}} &
\mathbf G A  \ar[d]^-{\mathbf G f} \\
\mathbf F B \ar[r]^-{x_B} \ar[d]_-{y_B} \ar@{}[rd]|-{\Longdownarrow{\theta_B}} &
\mathbf G B \ar@{=}[d] \\
\mathbf G B \ar@{=}[r] &
\mathbf G B }
\raisebox{-40pt}{$=$} 
\xymatrix@C=18pt@R=24pt{
\mathbf F A \ar[r]^-{x_A} \ar[d]_-{y_A} \ar@{}[rd]|-{\Longdownarrow{\theta_A}} &
\mathbf G A \ar@{=}[d] \\
\mathbf G A \ar@{=}[r] \ar[d]_-{\mathbf G f} \ar@{}[rd]|-{\Longdownarrow 1} &
\mathbf G A \ar[d]^-{\mathbf G f} \\
\mathbf G B \ar@{=}[r] &
\mathbf G B}
\xymatrix@C=20pt{
\mathbf F A \ar[r]^-{\mathbf F l} \ar[d]_-{y_A} \ar@{}[rd]|-{\Longdownarrow{y_l}} &
\mathbf F C \ar[d]|-{y_C} \ar[r]^-{x_C} \ar@{}[rd]|-{\Longdownarrow{\theta_C}} &
\mathbf G C \ar@{=}[d] \\
\mathbf G A \ar[r]_-{\mathbf G l} &
\mathbf G C \ar@{=}[r] & 
\mathbf G C}
\raisebox{-17pt}{$=$} 
\xymatrix@C=20pt{
\mathbf F A \ar[d]_-{y_A} \ar[r]^-{x_A} \ar@{}[rd]|-{\Longdownarrow{\theta_A}} &
\mathbf G A \ar@{=}[d] \ar[r]^-{\mathbf G l} \ar@{}[rd]|-{\Longdownarrow 1} &
\mathbf G C \ar@{=}[d]  \\
\mathbf G A \ar@{=}[r] & 
\mathbf G A \ar[r]_-{\mathbf G l} &
\mathbf G C}
\end{equation}
hold, for all vertical 1-cells $f:A\to B$ and horizontal 1-cells $l:A\to C$.
\end{definition}

Double categories are the 0-cells, double functors are the 1-cells and double natural transformations are the 2-cells of the 2-category $\mathsf{DblCat}$. 
For double functors $\mathbf F$, $\mathbf G$ and $\mathbf H:\mathbb D \to \mathbb D'$,
the vertical composite of double natural transformations $(x,\theta,y):\mathbf F \to \mathbf G$ and $(x',\theta',y'):\mathbf G \to \mathbf H$ has the component
\begin{equation} \label{eq:diag_comp}
\xymatrix{
\mathbf F A \ar[r]^-{x_A} \ar[d]_-{y_A} \ar@{}[rd]|-{\Longdownarrow{\theta_A}} &
\mathbf G A \ar@{=}[d] \ar[r]^-{x'_A} \ar@{}[rd]|-{\Longdownarrow 1} &
\mathbf H A \ar@{=}[d] \\
\mathbf G A \ar@{=}[r] \ar[d]_-{y'_A} \ar@{}[rd]|-{\Longdownarrow 1} &
\mathbf G A \ar[d]|-{\, y'_A} \ar[r]^-{x'_A}  \ar@{}[rd]|-{\Longdownarrow{\theta'_A}}  &
\mathbf H A \ar@{=}[d] \\
\mathbf H A \ar@{=}[r] &
\mathbf H A \ar@{=}[r] &
\mathbf H A }
\end{equation}
for any 0-cell $A$ of $\mathbb D$. This is the {\em diagonal composition} of \cite{BruniMeseguerMontanari}.
Double functors $\mathbf F:\mathbb D \to \mathbb D'$ and $\mathbf F':\mathbb D' \to \mathbb D''$ are composed as any internal functors. For another pair of double functors $\mathbf G:\mathbb D \to \mathbb D'$ and $\mathbf G':\mathbb D' \to \mathbb D''$, and for double natural transformations $(x,\theta,y):\mathbf F \to \mathbf G$ and $(x',\theta',y'):\mathbf F' \to \mathbf G'$, the horizontal composite has the component
$$
\xymatrix{
\mathbf F'(\mathbf FA) \ar[r]^-{\mathbf F' x_A} \ar[d]_-{\mathbf F' y_A} \ar@{}[rd]|-{\Longdownarrow{\mathbf F' \theta_A}} &
\mathbf F'(\mathbf GA)\ar[r]^-{x'_{\mathbf G A}} \ar@{=}[d] \ar@{}[rd]|-{\Longdownarrow 1} &
\mathbf G'(\mathbf G A) \ar@{=}[d] \\
\mathbf F'(\mathbf GA) \ar@{=}[r] \ar[d]_-{y'_{\mathbf G A}} \ar@{}[rd]|-{\Longdownarrow 1} &
\mathbf F'(\mathbf GA) \ar[r]^-{x'_{\mathbf GA}} \ar[d]|-{y'_{\mathbf GA}} \ar@{}[rd]|-{\Longdownarrow{\theta'_{\mathbf GA}}}  &
\mathbf G'(\mathbf G A) \ar@{=}[d] \\
\mathbf G'(\mathbf G A) \ar@{=}[r] &
\mathbf G'(\mathbf G A) \ar@{=}[r] &
\mathbf G'(\mathbf G A)}
$$
for any 0-cell $A$ of $\mathbb D$.
We leave it to the reader to check that $\mathsf{DblCat}$ is a 2-category indeed; in particular, to derive the middle four interchange law from \eqref{eq:vert}, \eqref{eq:hor} and \eqref{eq:doub}. There are evident forgetful 2-functors from $\mathsf{DblCat}$ to the 2-category of double categories, double functors and  horizontal or vertical transformations.

Although double natural transformations as in Definition \ref{def:dbl-nattr} were introduced in \cite{BruniMeseguerMontanari} (without using this name for them), we did not manage to find an explicit description of the above 2-category $\mathsf{DblCat}$ anywhere in the literature.

The 2-category $\mathsf{DblCat}$ can be made monoidal with the Cartesian product of double categories as the monoidal product. Then it has a symmetry $\mathbf X$ as well, provided by the flip double functors $\mathbb D \times \mathbb C\to \mathbb C \times \mathbb D$, sending a pair of 2-cells $(\omega,\vartheta)$ to $(\vartheta,\omega)$.

\begin{remark}
The author is grateful to Ross Street for kindly pointing out the following derivation of  the 2-category $\mathsf{DblCat}$ via the `change of enriching category construction' in \cite{EilenbergKelly}.

The category $\mathsf{dblcat}$ of double categories and double functors is Cartesian closed. (In the internal hom double category $[\mathbb A,\mathbb B]$ the 0-cells are the double functors $\mathbb A\to \mathbb B$, the horizontal/vertical 1-cells are the horizontal/vertical transformations and the 2-cells are the modifications in \cite[Section 1.6]{GrandisPare}.) Hence it can be seen as a $\mathsf{dblcat}$-enriched category. 

Now let us consider the following monoidal functor $F$ from $\mathsf{dblcat}$ to the category $\mathsf{cat}$ of small categories and functors. It sends a double category $\mathbb D$ to the category whose objects are the 0-cells of $\mathbb D$; whose morphisms $P \to Q$ are the triples consisting of a horizontal 1-cell $x$, a vertical 1-cell $y$ and a 2-cell 
$$
\xymatrix{
P \ar[r]^-x \ar[d]_-y \ar@{}[rd]|-{\Longdownarrow{\theta}} &
Q \ar@{=}[d] \\ 
Q \ar@{=}[r] &
Q}
$$
in $\mathbb D$; and whose composition is the diagonal composition as in \eqref{eq:diag_comp}. A double functor $\mathbf H$ is taken by $F$ to the functor which acts on the objects as $\mathbf H$ does; and acts on the morphisms componentwise. 

Following \cite{EilenbergKelly}, the monoidal functor $F$ induces a 2-functor from the 2-category of $\mathsf{dblcat}$-enriched categories to the 2-category $\mathsf{2Cat}$ of $\mathsf{cat}$-enriched categories. This induced 2-functor sends the $\mathsf{dblcat}$-enriched category $\mathsf{dblcat}$ precisely to the 2-category $\mathsf{DblCat}$ above.
\end{remark}

\begin{example} \label{ex:D*}
There are several notions of duality for double categories (see e.g. \cite[Section 1.2]{GrandisPare}). In addition to the horizontal and vertical opposites as for 2-categories, there is a further possibility to take the {\em diagonal dual} or {\em transpose} $\mathbb D^*$ of a double category $\mathbb D$. Its 0-cells are the 0-cells of $\mathbb D$, its horizontal 1-cells are the vertical 1-cells of $\mathbb D$ and vice versa, its vertical 1-cells are the horizontal 1-cells of $\mathbb D$. A 2-cell in $\mathbb D^*$ as on the left,
$$
\xymatrix@C=10pt@R=1pt{
A \ar@{..>}[rr]^-h \ar@{..>}[dddd]_-f &
\raisebox{-5pt}{$\ar@2@{..>}[ddd] $} &
C \ar@{..>}[dddd]^-g \\
\\
\\
& \\
B \ar@{..>}[rr]_-k &&
D}\qquad \qquad
\xymatrix@C=27pt@R=27pt{
A \ar[r]^-f\ar[d]_-h \ar@{}[rd]|-{\Longdownarrow{}}  &
B \ar[d]^-k \\
C \ar[r]_-g &
D}
$$
is a 2-cell in $\mathbb D$ on the right. 
The horizontal composition in $\mathbb D^*$ is the vertical composition in $\mathbb D$ and the vertical composition in $\mathbb D^*$ is the horizontal composition in $\mathbb D$.

Any double functor $\mathbf F:\mathbb D\to \mathbb C$ induces a double functor $\mathbf F^*:\mathbb D^*\to \mathbb C^*$. On the 0-cells and on the 2-cells it acts as $\mathbf F$. Its action on the horizontal and vertical 1-cells is the action of $\mathbf F$ on the vertical and horizontal 1-cells, respectively.

The components of a double natural transformation $(x,\theta,y):\mathbf F \to \mathbf G$ on the left
$$
\xymatrix@C=8pt@R=1pt{
\mathbf F A \ar[rr]^-x \ar[dddd]_-y & 
\raisebox{-5pt}{$\ar@2@{>}[ddd]^-{\displaystyle \theta}$} &
\mathbf G A \ar@{=}[dddd] \\
\\
\\
& \\
\mathbf G A \ar@{=}[rr] &&
\mathbf G A} \qquad \qquad
\xymatrix@C=8pt@R=1pt{
\mathbf F A \ar@{..>}[rr]^-y \ar@{..>}[dddd]_-x &
\raisebox{-5pt}{$\ar@2@{..>}[ddd]^-{\displaystyle \theta}$} &
\mathbf G A \ar@2@{..}[dddd] \\
\\
\\
& \\
\mathbf G A \ar@2@{..}[rr] &&
\mathbf G A}
$$
can be seen as the components on the right of a double natural transformation $(x,\theta,y)^*:\mathbf F^* \to \mathbf G^*$.

This defines 2-functors $(-)^*$ from the 2-category of double categories, double functors and horizontal/vertical transformations to the 2-category of double categories, double functors and vertical/horizontal transformations; as well as a 2-endofunctor $(-)^*$ on the 2-category $\mathsf{DblCat}$ of double categories, double functors and double natural transformations.
\end{example}

\begin{definition} \cite[Definition 4]{BruniMeseguerMontanari}
A {\em strict monoidal double category} is a monoid in the category of double categories and double functors considered with the Cartesian product of double categories as the monoidal product. That is, a strict monoidal double category consists of a double category $\mathbb D$ together with double functors $\otimes$ --- the {\em monoidal product} --- from the Cartesian product double category $\mathbb D \times \mathbb D$ to $\mathbb D$ and $I$ --- the {\em monoidal unit} --- from the singleton double category (with a single 0-cell and only identity higher cells) to $\mathbb D$ such that $\otimes$ is strictly associative with the strict unit $I$.
\end{definition}

The action of the monoidal product double functor $\otimes$ on any cells will be denoted by juxtaposition. Note that in a strict monoidal double category $\mathbb D$ all constituent categories $V$, $H$, $\mathcal V$ and $\mathcal H$ are strict monoidal, and so are the horizontal and vertical 2-categories $\mathsf{Hor}(\mathbb D)$ and $\mathsf{Ver}(\mathbb D)$.

\begin{definition} \cite[Definition 4]{BruniMeseguerMontanari}
A {\em strict monoidal double functor} is a monoid morphism in the category of double categories and double functors considered with the Cartesian product of double categories as the monoidal product. That is, a double functor $\mathbf F:\mathbb D \to \mathbb D'$ such that $\mathbf F(-\otimes -)=\mathbf F(-)\otimes' \mathbf F(-)$ and $\mathbf FI=I'$.
\end{definition}

\begin{definition} 
A {\em monoidal horizontal transformation} between strict monoidal double functors $(\mathbb D,\otimes,I) \to (\mathbb D',\otimes',I')$ is a horizontal transformation $z$ such that $z:V \to \mathcal V'$ is a strict monoidal functor. Symmetrically, a {\em monoidal vertical transformation} between strict monoidal double functors is a vertical transformation $v$ such that $v:H \to \mathcal H'$ is a strict monoidal functor.
A {\em monoidal double natural transformation} is a double natural transformation $(z,\theta,v)$ between strict monoidal double functors $\mathbf F$ and $\mathbf G:(\mathbb D,\otimes,I) \to (\mathbb D',\otimes',I')$ such that $z:V \to \mathcal V'$ and $v:H \to \mathcal H'$ are strict monoidal functors and $\theta$, seen as a natural transformation in either way, is monoidal. That is to say, the equalities of 2-cells
$$
\xymatrix{
(\mathbf F A)(\mathbf F B) \ar[r]^-{z_A z_B} \ar[d]_-{v_A v_B} 
\ar@{}[rd]|-{\Longdownarrow{\theta_A \theta_B}} &
(\mathbf G A)(\mathbf G B) \ar@{=}[d] \\
(\mathbf G A)(\mathbf G B) \ar@{=}[r] &
(\mathbf G A)(\mathbf G B) }
\raisebox{-17pt}{$=$}
\xymatrix{
\mathbf F (AB) \ar[r]^-{z_{AB}} \ar[d]_(.6){v_{AB}} 
\ar@{}[rd]|-{\Longdownarrow{\theta_{AB}}} &
\mathbf G (AB) \ar@{=}[d] \\
\mathbf G (AB) \ar@{=}[r] &
\mathbf G (AB) }
\qquad
\xymatrix@R=28pt{
\mathbf F I \ar[r]^-{z_I} \ar[d]_-{v_I} \ar@{}[rd]|-{\Longdownarrow{\theta_I}} &
\mathbf G I \ar@{=}[d] \\
\mathbf G I \ar@{=}[r] &
\mathbf G I}
\raisebox{-17pt}{$=$}
\xymatrix@R=28pt{
I' \ar@{=}[r] \ar@{=}[d] \ar@{}[rd]|-{\Longdownarrow{1}} &
I' \ar@{=}[d] \\
I' \ar@{=}[r] &
I'}
$$
hold for all 0-cells $A$ and $B$ in $\mathbb D$.
\end{definition}

Strict monoidal double categories are the 0-cells and strict monoidal double functors are the 1-cells in various 2-categories. The 2-cells can be chosen to be monoidal horizontal transformations, monoidal vertical transformations, or monoidal double natural transformations.

\begin{definition} \label{def:smdoublecat} \cite[Definition 5]{BruniMeseguerMontanari}
A {\em symmetric strict monoidal double category} consists of a strict monoidal double category $(\mathbb D,\otimes,I)$ together with a double natural transformation $(x,\sigma,y):\otimes\to \otimes.\mathbf X$ (where $\mathbf X$ denotes the relevant component of the symmetry of the 2-category $\mathsf{DblCat}$, that is, the flip double functor $\mathbb D \times \mathbb D\to \mathbb D \times \mathbb D$). It is required to be involutive in the sense that
$$
\xymatrix{
\otimes \ar[r]^-{x} \ar[d]_-{y} \ar@{}[rd]|-{\Longdownarrow \sigma} &
\otimes.\mathbf X \ar[r]^-{x.1} \ar@{=}[d] \ar@{}[rd]|-{\Longdownarrow 1} &
\otimes \ar@{=}[d] \\
\otimes.\mathbf X \ar[d]_-{y.1} \ar@{=}[r] \ar@{}[rd]|-{\Longdownarrow 1} &
\otimes.\mathbf X \ar[d]|-{y.1} \ar[r]^-{x.1}  \ar@{}[rd]|-{\Longdownarrow{\sigma.1}} &
\otimes \ar@{=}[d] \\
\otimes \ar@{=}[r] &
\otimes \ar@{=}[r] &
\otimes  } 
\quad \raisebox{-40pt}{$=$}\quad
\raisebox{-20pt}{$
\xymatrix{
\otimes \ar@{=}[d] \ar@{=}[r] \ar@{}[rd]|-{\Longdownarrow 1} &
\otimes \ar@{=}[d] \\
\otimes \ar@{=}[r] &
\otimes}$}
$$
and the {\em hexagon condition} is required to hold, which says that the 2-cells of Figure \ref{fig:doublebraid} are equal.
\begin{amssidewaysfigure}
\centering
\hspace*{-1.5cm}
\scalebox{.95}{$
\xymatrix@C=1pt{
\otimes.(\otimes \!\times  \! 1) \ar[r]^-{\raisebox{8pt}{${}_{1.(x \times 1)}$}} 
\ar[d]_-{1.(y \times 1)} 
\ar@{}[rrdd]|-{\Longdownarrow {1.(\sigma \times 1)}} &
\otimes.(\otimes  \! \times  \! 1).(\mathbf X  \! \times \! 1) \ar@{=}[r] & 
\otimes.(1  \! \times  \! \otimes).(\mathbf X  \! \times \! 1) \ar@{=}[dd]  
\ar[rr]^-{\raisebox{8pt}{${}_{1.(1 \times x ).1}$}} \ar@{}[rrdd]|-{\Longdownarrow 1} &&
\otimes.(1  \! \times \! \otimes).(1 \! \times \! \mathbf X).(\mathbf X \! \times \! 1) \ar@{=}[dd] \\
\otimes.(\otimes  \! \times \! 1).(\mathbf X  \! \times  \! 1) 
\ar@{=}[d]   &&
\\
\otimes.(1  \! \times  \! \otimes).(\mathbf X  \! \times \! 1) \ar@{=}[rr]  \ar[d]_-{1.(1 \times y ).1} 
\ar@{}[rrd]|-{\Longdownarrow 1} &&
\otimes.(1  \! \times  \!\otimes).(\mathbf X  \!\times \! 1) \ar[d]_-{1.(1 \times y ).1} \ar[rr]^-{1.(1 \times x ).1} 
\ar@{}[rrd]|-{\Longdownarrow {1.(1 \times \sigma).1}} &&
\otimes.(1  \!\times \! \otimes).(1 \!\times \! \mathbf X).(\mathbf X  \!\times \! 1) \ar@{=}[d] \\
\otimes.(1  \!\times \! \otimes).(1 \! \times  \! \mathbf X).(\mathbf X  \!\times \! 1) \ar@{=}[rr] &&
\otimes.(1  \!\times  \!\otimes).(1 \!\times \! \mathbf X).(\mathbf X  \!\times  \!1) \ar@{=}[rr] &&
\otimes.(1  \!\times \! \otimes).(1 \!\times \! \mathbf X).(\mathbf X \! \times \! 1) \\
\\
\\
\otimes.(\otimes \times 1) \ar@{=}[d] \ar@{=}[r] \ar@{}[rdd]|-{\Longdownarrow {1}} &
\otimes.(1 \times \otimes) \ar[r]^-{x.1} \ar[dd]_-{y.1} \ar@{}[rdd]|-{\Longdownarrow {\sigma.1}} &
\otimes.\mathbf X.(1 \times \otimes) \ar@{=}[dd] \ar@{=}[r] &
\otimes.(\otimes \times 1).(1\times \mathbf X).(\mathbf X \times 1) \ar@{=}[r] &
\otimes.(1 \times \otimes).(1\times \mathbf X).(\mathbf X \times 1) \ar@{=}[dddd] \\
\otimes.(1 \times \otimes) \ar[d]_-{y.1} \\
\otimes.\mathbf X.(1 \times \otimes) \ar@{=}[d] \ar@{=}[r] &
\otimes.\mathbf X.(1 \times \otimes) \ar@{=}[r] &
\otimes.\mathbf X.(1 \times \otimes) \ar@{}[rrdd]|-{\Longdownarrow 1} \\
\otimes.(\otimes \times 1).(1\times \mathbf X).(\mathbf X \times 1) \ar@{=}[d] \\ 
\otimes.(1 \times \otimes).(1\times \mathbf X).(\mathbf X \times 1) \ar@{=}[rrrr] &&&&
\otimes.(1 \times \otimes).(1\times \mathbf X).(\mathbf X \times 1).}$}
\caption{Hexagon condition for a symmetric strict monoidal double category}
\label{fig:doublebraid}
\end{amssidewaysfigure}
\end{definition}

The symmetry $(x,\sigma,y)$ of a symmetric strict monoidal double category $\mathbb D$ induces symmetries on all constituent categories $V$, $H$, $\mathcal V$ and $\mathcal H$.

It follows from Definition \ref{def:smdoublecat} that for any 0-cell $A$,
$$
\xymatrix{
IA\ar[r]^-x \ar[d]_-y \ar@{}[rd]|-{\Longdownarrow {\sigma}}  &
AI \ar@{=}[d] \\
AI \ar@{=}[r] &
AI}
\raisebox{-17pt}{$\quad = \quad$}
\xymatrix{
A \ar@{=}[r]  \ar@{=}[d]  \ar@{}[rd]|-{\Longdownarrow {1}}  &
A \ar@{=}[d] \\
A \ar@{=}[r] &
A}
\raisebox{-17pt}{$\quad = \quad$}
\xymatrix{
AI\ar[r]^-x \ar[d]_-y \ar@{}[rd]|-{\Longdownarrow {\sigma}}  &
IA \ar@{=}[d] \\
IA \ar@{=}[r] &
IA.}
$$
Recall from \cite[Section 4]{BruniMeseguerMontanari} that in a strict monoidal double category $(\mathbb D,\otimes,I)$ with symmetry $(x,\sigma,y)$, the 2-cell $\sigma$ has a horizontal inverse $\sigma^h$ on the left, and a vertical inverse $\sigma^v$ on the right of
$$
\xymatrix{
AB \ar[r]^-x \ar[d]_-y \ar@{}[rd]|-{\Longdownarrow \sigma} &
BA \ar@{=}[d] \\
BA \ar@{=}[r] \ar[d]_-y  \ar@{}[rd]|-{\Longdownarrow 1} &
BA \ar[d]^-y \\
AB \ar@{=}[r] &
AB} \qquad\qquad
\xymatrix{
AB \ar[r]^-x \ar[d]_-y \ar@{}[rd]|-{\Longdownarrow \sigma} &
BA \ar[r]^-x \ar@{=}[d] \ar@{}[rd]|-{\Longdownarrow 1} &
AB \ar@{=}[d] \\
BA \ar@{=}[r] &
BA \ar[r]_-x &
AB.}
$$ 
From these explicit expressions it follows for any vertical 1-cells $f:A\to A'$ and $g:B\to B'$ that
\begin{equation} \label{eq:sigma_id}
\xymatrix{
AB \ar[r]^-x \ar@{=}[d] \ar@{}[rd]|-{\Longdownarrow {\sigma^h}} &
BA \ar@{=}[r] \ar[d]^-y \ar@{}[rdd]|-{\Longdownarrow 1} &
BA \ar@{=}[r] \ar[d]_-{gf} \ar@{}[rd]|-{\Longdownarrow 1} &
BA \ar[d]^-{gf} \\
AB \ar@{=}[r] \ar[d]_-{fg} \ar@{}[rd]|-{\Longdownarrow 1} &
AB \ar[d]^-{fg} &
B'A' \ar[d]_-y \ar@{=}[r] \ar@{}[rd]|-{\Longdownarrow {\sigma^v}} &
B'A' \ar@{=}[d] \\
A'B' \ar@{=}[r] &
A'B' \ar@{=}[r] &
A'B' \ar[r]_-x &
B'A'}
\raisebox{-40pt}{$\qquad = \qquad$}
\raisebox{-20pt}{$
\xymatrix{
AB \ar[r]^-x \ar[d]_-{fg} \ar@{}[rd]|-{\Longdownarrow x} &
BA \ar[d]^-{gf} \\
A'B' \ar[r]_-x &
B'A'}$}
\end{equation}
and it also follows for any horizontal 1-cells $f:A\to A'$ and $g:B\to B'$ that
\begin{equation} \label{eq:sigma_y_id}
\xymatrix{
AB \ar[r]^-x \ar@{=}[d] \ar@{}[rd]|-{\Longdownarrow {\sigma^h}} &
BA \ar[r]^-{gf} \ar[d]^-y \ar@{}[rd]|-{\Longdownarrow {y}} &
B'A' \ar@{=}[r] \ar[d]^-y \ar@{}[rd]|-{\Longdownarrow {\sigma^v}} &
B'A' \ar@{=}[d] \\
AB \ar@{=}[r] &
AB \ar[r]_-{fg} &
A'B' \ar[r]_-x &
B'A'}
\raisebox{-17pt}{$\quad=\quad$}
\xymatrix{
AB \ar[r]^-x \ar@{=}[d] \ar@{}[rrd]|-{\Longdownarrow 1} &
BA\ar[r]^-{gf} &
B'A' \ar@{=}[d] \\
AB \ar[r]_-{fg} &
A'B' \ar[r]_-x &
B'A'.}
\end{equation}

\begin{definition}
A {\em symmetric strict monoidal double functor} is a strict monoidal double functor $\mathbf F:(\mathbb D,\otimes,I) \to (\mathbb D',\otimes',I')$ which preserves the symmetry in the sense of the equalities
$$
\mathbf F.x=x'.(\mathbf F \times \mathbf F)\qquad
\mathbf F.y=y'.(\mathbf F \times \mathbf F)\qquad
\mathbf F.\sigma=\sigma'.(\mathbf F\times \mathbf F).
$$
(of functors $V\times V \to \mathcal V'$ and $H\times H \to \mathcal H'$, and of natural transformations, respectively).  
\end{definition}

\begin{definition}
Consider  some symmetric strict monoidal double functors $\mathbf F, \mathbf G:\mathbb D \to \mathbb D'$.
A monoidal horizontal transformation $h:\mathbf F \to \mathbf G$ is said to be {\em symmetric} if the strict monoidal functor $h:V\to \mathcal V'$ is symmetric.
Symmetrically, a monoidal vertical transformation $v:\mathbf F \to \mathbf G$ is said to be {\em symmetric} if the strict monoidal functor $v:H\to \mathcal H'$ is symmetric.
A monoidal double natural transformation $(h,\theta, v):\mathbf F \to \mathbf G$ is said to be {\em symmetric} if $h$ and $v$ are symmetric.
\end{definition}

Symmetric strict monoidal double categories are the 0-cells and symmetric strict mono\-idal double functors are the 1-cells in various 2-categories. The 2-cells can be chosen to be symmetric monoidal horizontal transformations, symmetric monoidal vertical transformations or symmetric monoidal  double natural transformations. We denote this latter 2-category by $\mathsf{sm}\mbox{-}\mathsf{DblCat}$. Its vertical and horizontal compositions are given by the same expressions as in $\mathsf{DblCat}$. 

This finishes our review of the basic theory of double categories. 

\begin{example} \label{ex:Dpq}
(1) For any strict monoidal double category $\mathbb D$ there is a double category $\mathbb D_{01}$ which we describe next. Its 0-cells are the pseudomonoids 
$$
(A,
\xymatrix@C=15pt{AA \ar[r]^-m & A},
\xymatrix@C=15pt{I \ar[r]^-u & A},
\raisebox{17pt}{$\xymatrix@C=15pt@R=20pt{
AAA\ar@{=}[d]\ar[r]^-{m1} 
\ar@{}[rrd]|-{\Longdownarrow \alpha}& 
AA \ar[r]^-m &
A \ar@{=}[d]\\
AAA \ar[r]_-{1m} &
AA \ar[r]_-m &
A}$},
\raisebox{17pt}{$\xymatrix@C=15pt@R=20pt{
A\ar@{=}[d]\ar[r]^-{u1} 
\ar@{}[rrd]|-{\Longdownarrow \lambda}& 
AA \ar[r]^-m &
A \ar@{=}[d]\\
A \ar@{=}[rr] &&
A}$},
\raisebox{17pt}{$\xymatrix@C=15pt@R=20pt{
A\ar@{=}[d]\ar[r]^-{1u} 
\ar@{}[rrd]|-{\Longdownarrow \varrho}& 
AA \ar[r]^-m &
A \ar@{=}[d]\\
A \ar@{=}[rr] &&
A}$})
$$
in the horizontal 2-category $\mathsf{Hor(\mathbb D)}$ of $\mathbb D$ (which we call the {\em horizontal pseudomonoids} in $\mathbb D$), and the horizontal 1-cells are the opmonoidal 1-cells 
$$
(\xymatrix@C=15pt{A \ar[r]^-h &A'},
\raisebox{17pt}{$\xymatrix@C=15pt@R=20pt{
AA\ar@{=}[d]\ar[r]^-{m} 
\ar@{}[rrd]|-{\Longdownarrow {\chi^2}}& 
A \ar[r]^-h &
A' \ar@{=}[d]\\
AA \ar[r]_-{hh} &
A'A' \ar[r]_-{m'} &
A'}$},
\raisebox{17pt}{$\xymatrix@C=15pt@R=20pt{
I\ar@{=}[d]\ar[r]^-{u} 
\ar@{}[rrd]|-{\Longdownarrow {\chi^0}}& 
A \ar[r]^-h &
A' \ar@{=}[d]\\
I \ar[rr]_-{u'}& &
A'}$})
$$ 
in the horizontal 2-category of $\mathbb D$. A vertical 1-cell consists of a vertical 1-cell on the left, together with 2-cells on the right of 
$$
\xymatrix{
A \ar[d]^-f \\
A'}\qquad\qquad
\xymatrix{
AA \ar[r]^-{m} \ar[d]_-{ff} \ar@{}[rd]|-{\Longdownarrow {\varphi^2}} &
A \ar[d]^-f \\
A'A'\ar[r]_-{m'} &
A'} \qquad\qquad
\xymatrix{
I \ar[r]^-{u} \ar@{=}[d] \ar@{}[rd]|-{\Longdownarrow {\varphi^0}} &
A \ar[d]^-f \\
I\ar[r]_-{u'} &
A'}
$$
which are required to satisfy the following coassociativity and counitality conditions.
$$
\xymatrix{
AAA \ar[r]^-{m1} \ar@{=}[d] \ar@{}[rrd]|-{\Longdownarrow \alpha} &
AA \ar[r]^-m &
A \ar@{=}[d] \\
AAA \ar[r]^-{1m} \ar[d]_-{fff} \ar@{}[rd]|-{\Longdownarrow {1\varphi^2}} &
AA \ar[r]^-{m} \ar[d]|-{ff}  \ar@{}[rd]|-{\Longdownarrow {\varphi^2}} &
A \ar[d]^-f \\
A'A'A' \ar[r]_-{1m'} &
A'A' \ar[r]_-{m'} &
A'}
\raisebox{-40pt}{$\quad=\quad$}
\xymatrix{
AAA \ar[r]^-{m1} \ar[d]_-{fff} \ar@{}[rd]|-{\Longdownarrow {\varphi^2 1}} &
AA \ar[r]^-{m} \ar[d]|-{ff}  \ar@{}[rd]|-{\Longdownarrow {\varphi^2}} &
A \ar[d]^-f \\
A'A'A '\ar[r]^-{m'1} \ar@{=}[d] \ar@{}[rrd]|-{\Longdownarrow {\alpha'}} &
A'A' \ar[r]^-{m'} &
A' \ar@{=}[d] \\
A'A'A' \ar[r]_-{1m'} &
A'A' \ar[r]_-{m'} &
A'}
$$
$$
\xymatrix@C=35pt{
A \ar[r]^-{u1} \ar@{=}[d] \ar@{}[rrd]|-{\Longdownarrow \lambda} &
AA \ar[r]^-m &
A \ar@{=}[d] \\
A \ar@{=}[rr] \ar[d]_-{f} \ar@{}[rrd]|-{\Longdownarrow 1} &&
A \ar[d]^-f \\
A'\ar@{=}[rr]   &&
A'}
\raisebox{-40pt}{$\quad=\quad$}
\xymatrix@C=35pt{
A \ar[r]^-{u1} \ar[d]_-{f} \ar@{}[rd]|-{\Longdownarrow {\varphi^0 1}} &
AA \ar[r]^-{m} \ar[d]|-{ff}  \ar@{}[rd]|-{\Longdownarrow {\varphi^2}} &
A \ar[d]^-f \\
A '\ar[r]^-{u'1} \ar@{=}[d] \ar@{}[rrd]|-{\Longdownarrow {\lambda'}} &
A'A' \ar[r]^-{m'} &
A' \ar@{=}[d] \\
A' \ar@{=}[rr] &&
A'}
$$
$$
\xymatrix@C=35pt{
A \ar[r]^-{1u} \ar@{=}[d] \ar@{}[rrd]|-{\Longdownarrow\varrho} &
AA \ar[r]^-m &
A \ar@{=}[d] \\
A \ar@{=}[rr] \ar[d]_-{f} \ar@{}[rrd]|-{\Longdownarrow 1} &&
A \ar[d]^-f \\
A'\ar@{=}[rr]   &&
A'}
\raisebox{-40pt}{$\quad=\quad$}
\xymatrix@C=35pt{
A \ar[r]^-{1u} \ar[d]_-{f} \ar@{}[rd]|-{\Longdownarrow {1\varphi^0}} &
AA \ar[r]^-{m} \ar[d]|-{ff}  \ar@{}[rd]|-{\Longdownarrow {\varphi^2}} &
A \ar[d]^-f \\
A '\ar[r]^-{1u'} \ar@{=}[d] \ar@{}[rrd]|-{\Longdownarrow {\varrho'}} &
A'A' \ar[r]^-{m'} &
A' \ar@{=}[d] \\
A' \ar@{=}[rr] &&
A'}
$$
A 2-cell in $\mathbb D_{01}$ with boundaries on the left of
$$
\xymatrix@C=35pt{
A \ar[r]^-{(h,\chi^2,\chi^0)} \ar[d]_-{(f,\varphi^2,\varphi^0)} &
C \ar[d]^-{(g,\gamma^2,\gamma^0)} \\
B \ar[r]_-{(k,\kappa^2,\kappa^0)} &
D}  \qquad \qquad
\xymatrix{
A \ar[r]^-h \ar[d]_-f \ar@{}[rd]|-{\Longdownarrow \omega} &
C \ar[d]^-g \\
B \ar[r]_-k &
D}
$$
is a 2-cell of $\mathbb D$ on the right, subject to the opmonoidality conditions
$$
\xymatrix{
AA \ar[r]^-m \ar@{=}[d] \ar@{}[rrd]|-{\Longdownarrow {\chi^2}} &
A \ar[r]^-h &
C \ar@{=}[d] \\
AA \ar[r]^-{hh} \ar[d]_-{ff} \ar@{}[rd]|-{\Longdownarrow {\omega\omega}} &
CC \ar[r]^-m \ar[d]|-{gg} \ar@{}[rd]|-{\ \Longdownarrow {\gamma^2}} &
C \ar[d]^-g \\
BB \ar[r]_-{kk} &
DD \ar[r]_-m &
D}
\raisebox{-40pt}{$\quad=\quad$}
\xymatrix{
AA \ar[r]^-m \ar[d]_-{ff} \ar@{}[rd]|-{\ \Longdownarrow {\varphi^2}} &
A \ar[r]^-h \ar[d]|-f \ar@{}[rd]|-{\Longdownarrow \omega} &
C \ar[d]^-g \\
BB \ar[r]^-m \ar@{=}[d] \ar@{}[rrd]|-{\Longdownarrow {\kappa^2}} &
B \ar[r]^-k &
D \ar@{=}[d] \\
BB \ar[r]_-{kk} &
DD \ar[r]_-m &
D}
$$
$$
\xymatrix@C=35pt{
I \ar[r]^-u \ar@{=}[d] \ar@{}[rrd]|-{\Longdownarrow {\chi^0}} &
A \ar[r]^-h &
C \ar@{=}[d] \\
I  \ar[rr]^-u \ar@{=}[d] \ar@{}[rrd]|-{\ \Longdownarrow {\gamma^0}} &&
C \ar[d]^-g \\
I \ar[rr]_-u &&
D}
\raisebox{-40pt}{$\quad=\quad$}
\xymatrix@C=35pt{
I \ar[r]^-u \ar@{=}[d] \ar@{}[rd]|-{\ \Longdownarrow {\varphi^0}} &
A \ar[r]^-h \ar[d]|-f \ar@{}[rd]|-{\Longdownarrow \omega} &
C \ar[d]^-g \\
I \ar[r]^-u \ar@{=}[d] \ar@{}[rrd]|-{\Longdownarrow {\kappa^0}} &
B \ar[r]^-k &
D \ar@{=}[d] \\
I \ar[rr]_-u &&
D}
$$
(where all occurring pseudomonoid structures are denoted by the same symbols $(m,u,$ $\alpha,\lambda,\varrho)$).
Horizontal 1-cells are composed by the composition rule of opmonoidal 1-cells in a 2-category, see Example \ref{ex:Mpq}.
Vertical 1-cells are vertically composed according to the rule
$$
\xymatrix{
A \ar[d]^-f \\
B \ar[d]^-g \\
C}\qquad \qquad
\xymatrix{
AA \ar[r]^-{m} \ar[d]_-{ff} \ar@{}[rd]|-{\Longdownarrow {\varphi^2}} &
A \ar[d]^-f \\
BB\ar[r]^-{m} \ar[d]_-{gg} \ar@{}[rd]|-{\Longdownarrow {\gamma^2}}&
B\ar[d]^-g \\
CC \ar[r]_-m &
C} \qquad\qquad
\xymatrix{
I \ar[r]^-{u} \ar@{=}[d] \ar@{}[rd]|-{\Longdownarrow {\varphi^0}} &
A \ar[d]^-f \\
I\ar[r]^-{u} \ar@{=}[d] \ar@{}[rd]|-{\Longdownarrow {\gamma^0}}&
B\ar[d]^-g \\
I \ar[r]_-u &
C.}
$$
Horizontal and vertical compositions of 2-cells are computed in $\mathbb D$. The horizontal 2-category of the resulting double category $\mathbb D_{01}$ can be obtained by applying the 2-functor $(-)_{01}$ of Example \ref{ex:Mpq} to the horizontal 2-category of $\mathbb D$. That is, $\mathsf{Hor}(\mathbb D)_{01}\cong \mathsf{Hor}(\mathbb D_{01})$. (The analogous statement fails to hold for the vertical 2-category.)

If in addition the strict monoidal double category $(\mathbb D,\otimes,I)$ has a symmetry $(x,\sigma,y)$, then the above double category $\mathbb D_{01}$ can be equipped with a strict monoidal structure. The monoidal unit is the monoidal unit $I$ of $\mathbb D$ seen as a trivial pseudomonoid, see Example \ref{ex:Mpq}.
The monoidal product of 0-cells and of horizontal 1-cells is their monoidal product as pseudomonoids and opmonoidal 1-cells, respectively,  in the horizontal 2-category, see again Example \ref{ex:Mpq}. The monoidal product of vertical 1-cells $(f,\varphi^2,\varphi^0):A\to A'$ and $(g,\gamma^2,\gamma^0):B\to B'$ is
$$
\xymatrix{
AB \ar[d]^-{fg} \\
A'B'} \qquad 
\xymatrix{
ABAB \ar[r]^-{1x1} \ar[d]_-{fgfg} \ar@{}[rd]|-{\Longdownarrow {1x1}} &
AABB \ar[r]^-{mm} \ar[d]|-{ffgg} \ar@{}[rd]|-{\Longdownarrow {\varphi^2\gamma^2}} &
AB \ar[d]^{fg} \\
A'B'A'B' \ar[r]_-{1x1} &
A'A'B'B' \ar[r]_-{m'm'} &
A'B'}
\qquad 
\xymatrix{
I\ar[r]^-{uu} \ar@{=}[d] \ar@{}[rd]|-{\Longdownarrow {\varphi^0\gamma^0}} &
AB \ar[d]^{fg} \\
I' \ar[r]_-{u'u'} &
A'B'.}
$$
The monoidal product of 2-cells is their monoidal product as 2-cells of $\mathbb D$.
What is more, the above strict monoidal double category $\mathbb D_{01}$ has a symmetry with components 
$$
\xymatrix{
AB \ar[r]^-{(x,1,1)} \ar[d]_-{(y,\nu^2,\nu^0)} \ar@{}[rd]|-{\Longdownarrow \sigma} &
BA\ar@{=}[d] \\
BA \ar@{=}[r] &
BA}
$$
where the binary part $\nu^2$ and the nullary part $\nu^0$ of the opmonoidal 1-cell $y$ are the 2-cells
$$
\xymatrix{
ABAB \ar[rrr]^-{1x1} \ar@{=}[d] \ar@{}[rrrd]|-{\Longdownarrow 1} &&&
AABB \ar[r]^-{mm} \ar@{=}[d] \ar@{}[rd]|-{\Longdownarrow 1} &
AB \ar@{=}[d] \\
ABAB \ar[r]^-{xx} \ar@{=}[d] \ar@{}[rrd]|-{\Longdownarrow 1} &
BABA \ar[r]^-{1x1} & 
BBAA \ar[r]^-{x_{22}} \ar@{=}[d] \ar@{}[rrd]|-{\Longdownarrow 1} &
AABB \ar[r]^-{mm} &
AB \ar@{=}[d] \\
ABAB \ar[r]^-{xx} \ar[d]_{yy} \ar@{}[rd]|-{\Longdownarrow {\sigma\sigma}} &
BABA \ar[r]^-{1x1} \ar@{=}[d] \ar@{}[rrd]|-{\Longdownarrow 1} & 
BBAA \ar[r]^-{mm}  &
BA \ar@{=}[d] \ar[r]^-x \ar@{}[rd]|-{\Longdownarrow {\sigma^h}} &
AB \ar[d]^-y \\
BABA \ar@{=}[r] &
BABA \ar[r]_-{1x1} &
BBAA \ar[r]_-{mm} &
BA \ar@{=}[r] &
BA}\qquad
\xymatrix@R=43pt{
I \ar[rr]^-{uu} \ar@{=}[d] \ar@{}[rrd]|-{\Longdownarrow 1} &&
AB \ar@{=}[d] \\
I \ar[r]^-{uu} \ar@{=}[d] \ar@{}[rd]|-{\Longdownarrow 1} &
BA \ar[r]^-x \ar@{=}[d]\ar@{}[rd]|-{\Longdownarrow {\sigma^h}} &
AB \ar[d]^-y \\
I \ar[r]_-{uu} &
BA \ar@{=}[r] &
BA}
$$
respectively, (where $\sigma^h$ stands for the horizontal inverse of $\sigma$ in $\mathbb D$).

For any strict monoidal double functor $\mathbf F:\mathbb D \to \mathbb D'$ there is a double functor $\mathbf F_{01}:\mathbb D_{01} \to \mathbb D'_{01}$. On the horizontal 2-category it acts as the 2-functor obtained by applying the 2-functor $(-)_{01}$ of Example \ref{ex:Mpq} to the 2-functor induced by $\mathbf F$ between the horizontal 2-categories. It takes a vertical 1-cell $(f,\varphi^2,\varphi^0)$ to
$$
\xymatrix{
\mathbf F A \ar[d]^-{\mathbf F f} \\
\mathbf F B} \qquad
\xymatrix{
(\mathbf F A) (\mathbf F A) \ar[d]_-{(\mathbf F f)(\mathbf F f)} \ar@{=}[r] \ar@{}[rd]|-{\Longdownarrow 1} &
\mathbf F (AA) \ar[d]|-{\mathbf F (ff)} \ar[r]^-{\mathbf F m} \ar@{}[rd]|-{\Longdownarrow {\mathbf F \varphi^2}} &
\mathbf F A \ar[d]^-{\mathbf F f} \\
(\mathbf F B)(\mathbf F B) \ar@{=}[r] &
\mathbf F (BB) \ar[r]_-{\mathbf F m} &
\mathbf F B} \qquad
\xymatrix{
I \ar@{=}[d] \ar@{=}[r] \ar@{}[rd]|-{\Longdownarrow 1} &
\mathbf F I \ar@{=}[d] \ar[r]^-{\mathbf F u}\ar@{}[rd]|-{\Longdownarrow {\mathbf F \varphi^0}}  &
\mathbf F A \ar[d]^-{\mathbf F f} \\
I \ar@{=}[r] &
\mathbf F I \ar[r]_-{\mathbf F u} &
\mathbf F B} 
$$
and it acts on the 2-cells as $\mathbf F$.

If the strict monoidal double categories $(\mathbb D,\otimes,I)$ and $(\mathbb D',\otimes',I') $ are symmetric and $\mathbf F$ preserves the symmetry, then  $\mathbf F_{01}$ is symmetric strict monoidal with respect to the symmetric strict monoidal structures of $\mathbb D_{01}$ and $\mathbb D' _{01}$ in the previous paragraphs.

Finally, take any strict monoidal double functors $\mathbf F$ and $\mathbf G$.
 For any monoidal horizontal transformation $z:\mathbf F \to \mathbf G$, there is a horizontal transformation $z_{01}:\mathbf F_{01} \to \mathbf G_{01}$ with the components $(z_A,1,1)$ at any pseudomonoid $(A,m,u,\alpha,\lambda,$
$\varrho)$ in $\mathbb D$, and $x_f$ at any vertical 1-cell $f$ in $\mathbb D$. 
Whenever $\mathbf F$ and $\mathbf G$ are symmetric as well --- so that $\mathbf F_{01}$ and $\mathbf G_{01}$ are strict monoidal too --- $z_{01}$ is monoidal too; and it is symmetric if $z$ is so.

For any monoidal vertical transformation $v:\mathbf F \to \mathbf G$ there is a vertical transformation $v_{01}:\mathbf F_{01} \to \mathbf G_{01}$ with the components $(v_A,v_m,v_u)$ at any pseudomonoid $(A,m,u,\alpha,\lambda,\varrho)$  in $\mathbb D$; and $v_h$ at any horizontal 1-cell $h$ in $\mathbb D$. 
It is symmetric monoidal  whenever $v$ is a symmetric monoidal vertical transformation between  symmetric strict monoidal double functors.

For any monoidal double natural transformation $(z,\theta,v):\mathbf F \to \mathbf G$ there is a double natural transformation $\mathbf F_{01} \to \mathbf G_{01}$ with the components $z_{01}$, $v_{01}$ and
$$
\xymatrix{
\mathbf F A \ar[r]^-{(z,1,1)} \ar[d]_-{(v,v_m,v_u)} \ar@{}[rd]|-{\Longdownarrow \theta} &
\mathbf G A \ar@{=}[d] \\
\mathbf G A \ar@{=}[r] &
\mathbf G A} 
$$
at any pseudomonoid $(A,m,u,\alpha,\lambda,\varrho)$  in $\mathbb D$. It is monoidal  whenever $\mathbf F$, $\mathbf G$ and $v$ are symmetric as well; and symmetric if so is $z$ in addition.

These maps define various 2-functors $(-)_{01}$:
\begin{enumerate}[(i)]
\item from the 2-category of strict monoidal double categories, strict monoidal double functors and monoidal horizontal transformations to the 2-category of double categories, double functors and horizontal transformations,
\item from the 2-category of strict monoidal double categories, strict monoidal double functors and monoidal vertical transformations to the 2-category of double categories, double functors and vertical transformations,
\item from the 2-category of strict monoidal double categories, strict monoidal double functors and monoidal double natural transformations to the 2-category $\mathsf{DblCat}$ of double categories, double functors and double natural transformations,
\item from the 2-category of symmetric strict monoidal double categories, symmetric strict monoidal double functors and symmetric monoidal horizontal transformations to itself,
\item from the 2-category of symmetric strict monoidal double categories, symmetric strict monoidal double functors and symmetric monoidal vertical transformations  to itself,
\item from the 2-category $\mathsf{sm}\mbox{-}\mathsf{DblCat}$ of symmetric strict  monoidal double categories, symmetric strict monoidal double functors and  symmetric   monoidal double natural transformations to itself.
\end{enumerate}
 
(2) The `diagonal duality' 2-functors $(-)^*$ (see Example \ref{ex:D*}) induce 2-functors between the (domain, respectively, codomain) 2-categories in items (i) and (ii), as well as those in items (iv) and (v); and they induce 2-endofunctors on the 2-categories in items (iii) and (vi) of the list of part (1) above.
Hence symmetrically to part (1), we define 2-functors $(-)_{10}:=(((-)^*)_{01})^*$ of all kinds in this list. 
They send a strict monoidal double category $\mathbb D$ to the double category $\mathbb D_{10}$ whose 0-cells are the pseudomonoids in the vertical 2-category of $\mathbb D$ (i.e. the {\em vertical pseudomonoids} in $\mathbb D$), and the vertical 1-cells are the monoidal morphisms in the vertical 2-category of $\mathbb D$. The horizontal 1-cells are monoidal horizontal 1-cells with respect to the vertical pseudomonoids and the 2-cells are the 2-cells of $\mathbb D$ which are monoidal in a suitable sense. If in addition the strict monoidal double category $\mathbb D$ is symmetric then also $\mathbb D_{10}$ is symmetric strict monoidal.

(3) Next we show that the diagram
$$
\xymatrix{
\mathsf{sm}\mbox{-}\mathsf{DblCat}  \ar[r]^-{(-)_{01}} \ar[d]_-{(-)_{10}} &
\mathsf{sm}\mbox{-}\mathsf{DblCat} \ar[d]^-{(-)_{10}}  \\
\mathsf{sm}\mbox{-}\mathsf{DblCat} \ar[r]_-{(-)_{01}} & 
\mathsf{sm}\mbox{-}\mathsf{DblCat}}
$$
of the 2-functors in parts (1) and (2) commutes up-to 2-natural isomorphism. (The same proof shows, in fact, the commutativity of the 2-endofunctors $(-)_{01}$ and $(-)_{10}$ also in items (iv) and (v) of the list of part (1).)

For any symmetric strict monoidal double category $\mathbb D$, a  0-cell of $(\mathbb D_{01})_{10}$ is a vertical pseudomonoid in $\mathbb D_{01}$, 
$$
((A,m^\hori,u^\hori,\alpha^\hori,\lambda^\hori,\varrho^\hori),
(m^\verti,\zeta,\zeta_0),
(u^\verti,\zeta^0,\zeta^0_0),
\alpha^\verti,\lambda^\verti,\varrho^\verti).
$$
Symmetrically, a  0-cell of $(\mathbb D_{10})_{01}$ is a horizontal pseudomonoid in $\mathbb D_{10}$, 
$$
((A,m^\verti,u^\verti,\alpha^\verti,\lambda^\verti,\varrho^\verti),
(m^\hori,\xi,\xi^0),
(u^\hori,\xi_0,\xi^0_0),
\alpha^\hori,\lambda^\hori,\varrho^\hori).
$$
A bijective correspondence between them is given by $\zeta^0_0=\xi^0_0$, $\zeta_0=\xi_0$, $\zeta^0=\xi^0$ and $\zeta$ equal to
$$
\xymatrix@C=35pt@R=20pt{
AAAA \ar[r]^-{1x1} \ar[d]_-{1y1} \ar@{}[rd]|-{\Longdownarrow {1\sigma1}} &
AAAA \ar[r]^-{m^\hori m^\hori} \ar@{}[d] \ar@{=}[d] \ar@{}[rd]|-{\Longdownarrow{1}} &
AA \ar@{=}[d] \\
AAAA \ar@{=}[r] \ar[d]_-{1y1} \ar@{}[rrdd]|-{\Longdownarrow{\xi}} &
AAAA \ar[r]_-{m^\hori m^\hori} &
AA \ar[dd]^-{m^\verti} \\
AAAA \ar[d]_-{m^\verti m^\verti} \\
AA \ar[rr]_-{m^\hori} &&
A.}
$$
Similarly to Example \ref{ex:Mpq}~(3), the following properties are pairwise equivalent (for checking it use also \eqref{eq:sigma_id}).
\begin{itemize}
\item
Associativity and unitality of the monoidal horizontal 1-cell $(m^\hori,\xi,\xi^0)$ are equivalent to the compatibilities of the opmonoidal 2-cells $\alpha^\verti$, $\lambda^\verti$ and $\varrho^\verti$ with the binary parts of their source and target opmonoidal vertical 1-cells.
\item
Associativity and unitality of the monoidal horizontal 1-cell $(u^-,\xi_0,\xi^0_0)$ are equivalent to the compatibilities of the opmonoidal 2-cells $\alpha^\verti$, $\lambda^\verti$ and $\varrho^\verti$ with the nullary parts of their source and target opmonoidal  vertical 1-cells.
\item
Compatibilities of the monoidal 2-cells $\alpha^\hori$, $\lambda^\hori$ and $\varrho^\hori$  with the binary parts of their source and target monoidal  horizontal 1-cells are  equivalent to the coassociativity and counitality of the opmonoidal vertical 1-cell $(m^\verti,\zeta,\zeta_0)$.
\item
Compatibilities of the monoidal 2-cells $\alpha^\hori$, $\lambda^\hori$ and $\varrho^\hori$ with the nullary parts of their source and target monoidal  horizontal 1-cells are equivalent to the coassociativity and counitality of the opmonoidal vertical 1-cell $(u^\verti,\zeta^0,\zeta^0_0)$.
\end{itemize}

A horizontal 1-cell in $(\mathbb D_{01})_{10}$ is a monoidal horizontal 1-cell in $\mathbb D_{01}$,
$$
((f,\varphi^2,\varphi^0),\varphi_2,\varphi_0)
$$
while a horizontal 1-cell in $(\mathbb D_{10})_{01}$ is an opmonoidal horizontal 1-cell in $\mathbb D_{10}$,
$$
((f,\varphi_2,\varphi_0),\varphi^2,\varphi^0).
$$
Both of them amount to a monoidal structure $(\varphi_2,\varphi_0)$ on $f$ with respect to the vertical pseudomonoids and an opmonoidal structure $(\varphi^2,\varphi^0)$ on $f$ with respect to the horizontal pseudomonoids, which are subject to four compatibility conditions. 
\begin{itemize}
\item Compatibility of the opmonoidal 2-cell $\varphi_0$ with the nullary parts of its surrounding opmonoidal 1-cells coincides with the compatibility of the monoidal 2-cell $\varphi^0$ with the nullary parts of its surrounding monoidal 1-cells. 
\item Compatibility of the opmonoidal 2-cell $\varphi_0$ with the binary parts of its surrounding opmonoidal 1-cells is the same condition as the compatibility of the monoidal 2-cell $\varphi^2$ with the nullary parts of surrounding monoidal 1-cells. 
\item Symmetrically, compatibility of the opmonoidal 2-cell $\varphi_2$ with the nullary parts of its surrounding opmonoidal 1-cells coincides with the compatibility of the monoidal 2-cell $\varphi^0$ and the binary parts of its surrounding monoidal 1-cells. 
\item Compatibility of the opmonoidal 2-cell $\varphi_2$ with the binary parts of 
its surrounding opmonoidal 1-cells, and compatibility of the mono\-idal 2-cell $\varphi^2$ with the binary parts of its surrounding monoidal 1-cells are equivalent to each other (for its proof also \eqref{eq:sigma_y_id} is used).
\end{itemize}
So a bijective correspondence between the horizontal 1-cells of $(\mathbb D_{01})_{10}$ and $(\mathbb D_{10})_{01}$ is obtained by re-ordering the constituent 2-cells.

A bijection between the vertical 1-cells of $(\mathbb D_{01})_{10}$ and $(\mathbb D_{10})_{01}$ is obtained symmetrically.

Finally, the 2-cells both in $(\mathbb D_{01})_{10}$ and $(\mathbb D_{10})_{01}$ are those 2-cells of $\mathbb D$ which are both 
\begin{itemize}
\item monoidal (with respect to the monoidal structures of the surrounding 1-cells),
\item opmonoidal (with respect to the opmonoidal structures of these 1-cells). 
\end{itemize}
So there is a trivial (identity) bijection between them. 

The above bijections combine into an iso double functor $(\mathbb D_{01})_{10}\to (\mathbb D_{10})_{01}$ which is symmetric strict monoidal and 2-natural.

(4) By the commutativity of the first diagram of part (3), we may apply  in any order the 2-functor in its columns $p$ times and the 2-functor in its rows $q$ times. This yields a 2-functor $(-)_{pq}: \mathsf{sm}\mbox{-}\mathsf{DblCat} \to \mathsf{sm}\mbox{-}\mathsf{DblCat}$. 
Analogously we obtain 2-endofunctors $(-)_{pq}$ on the 2-categories in items (iv) and (v) of the list of part (1). 
For any symmetric strict monoidal double category $\mathbb D$, we term the 0-cells of $\mathbb D_{pq}$ as the {\em (p,q)-oidal objects} of $\mathbb D$. In particular, the $(0,1)$-oidal objects are the horizontal pseudomonoids in $\mathbb D$ while the $(1,0)$-oidal objects are the vertical pseudomonoids. Thus in contrast to part (4) of Example \ref{ex:Mpq}, the structure of $(p,q)$-oidal objects in $\mathbb D$ depends both on $p$ and $q$ not only their sum.
\end{example}

\begin{example} \label{ex:Sqr}
We may regard Ehresmann's {\em square} or {\em quintet} construction \cite{Ehresmann} as a 2-functor
$\Sqr :\mathsf{sm}\mbox{-}\mathsf{2Cat} \to \mathsf{sm}\mbox{-}\mathsf{DblCat}$.

Recall that for any 2-category $\mathcal M$, the 0-cells of the double category $\Sqr (\mathcal M)$ are the 0-cells of $\mathcal M$. Both the horizontal and the vertical 1-cells in $\Sqr (\mathcal M)$ are the 1-cells of $\mathcal M$. The 2-cells of the form
$$
\xymatrix{
A \ar[r]^-h \ar[d]_-f \ar@{}[rd]|-{\Longdownarrow \omega}& 
C\ar[d]^-g \\
B \ar[r]_-k &
D}
$$
in $\Sqr (\mathcal M)$ are the 2-cells $\omega:g.h \to k.f$ in $\mathcal M$. 
Both the horizontal composition of horizontal 1-cells, and the vertical composition of vertical 1-cells in $\Sqr (\mathcal M)$ are given by the composition of 1-cells in $\mathcal M$.
The horizontal composition results in
\begin{equation} \label{eq:Sqr_hor}
\xymatrix{
t.n.h \ar[r]^-{\vartheta.1} &
p.g.h \ar[r]^-{1.\omega} &
p.k.f}
\quad \textrm{for 2-cells \quad}
\raisebox{17pt}{$
\xymatrix{
A \ar[r]^-h \ar[d]_-f \ar@{}[rd]|-{\Longdownarrow \omega}& 
C\ar[d]^-g \\
B \ar[r]_-k &
D} \quad
\xymatrix{
C \ar[r]^-n \ar[d]_-g \ar@{}[rd]|-{\Longdownarrow \vartheta}& 
E\ar[d]^-t \\
D \ar[r]_-p &
F}$}
\end{equation}
while the vertical composition results in
\begin{equation} \label{eq:Sqr_vert}
\xymatrix{
p.g.h \ar[r]^-{1.\omega} &
p.k.f \ar[r]^-{\vartheta.1} &
t.n.f}
\quad \textrm{for 2-cells \quad}
\raisebox{17pt}{$
\xymatrix{
A \ar[r]^-h \ar[d]_-f \ar@{}[rd]|-{\Longdownarrow \omega}& 
C\ar[d]^-g \\
B \ar[r]_-k &
D} \quad
\xymatrix{
B \ar[d]_-n \ar[r]^-k \ar@{}[rd]|-{\Longdownarrow \vartheta}& 
D\ar[d]^-p \\
E \ar[r]_-t &
F.}$}
\end{equation}
Both the horizontal and the vertical 2-categories of $\Sqr(\mathcal M)$ are isomorphic to $\mathcal M$.

Whenever $\mathcal M$ possesses a strict monoidal structure, it induces an evident strict mono\-idal structure on $\Sqr (\mathcal M)$. If moreover $\mathcal M$ has a symmetry $x$ then a symmetry on $\Sqr (\mathcal M)$ is given by the horizontal and vertical transformations in the first two diagrams, and 2-cell part in the third diagram of
$$
\xymatrix{
AB \ar[r]^-x \ar[d]_-{fg} \ar@{}[rd]|-{\Longdownarrow 1} &
BA \ar[d]^-{gf} \\
A'B' \ar[r]_-x &
B'A'}\qquad
\xymatrix{
AB \ar[r]^-{fg} \ar[d]_-x \ar@{}[rd]|-{\Longdownarrow 1} &
A'B' \ar[d]^-x  \\
BA \ar[r]_{gf} &
B'A'} \qquad
\xymatrix{
AB \ar[r]^-x \ar[d]_-x \ar@{}[rd]|-{\Longdownarrow 1} &
BA \ar@{=}[d] \\
BA \ar@{=}[r] &
BA}
$$
for any 1-cells $f:A \to A'$ and $g:B \to B'$ in $\mathcal M$.

For any 2-functor $\mathsf F:\mathcal M \to \mathcal N$, there is a double functor $\Sqr (\mathsf F):\Sqr (\mathcal M) \to \Sqr (\mathcal N)$. Its action on the 0-cells is equal to the action of $\mathsf F$ on the 0-cells. Its action both on the horizontal and vertical 1-cells is equal to the action of $\mathsf F$ on the 1-cells. Its action on the 2-cells is equal to the action of $\mathsf F$ on the 2-cells. Whenever $\mathsf F$ is strict monoidal, clearly $\Sqr (\mathsf F)$ is strict monoidal; and whenever $\mathsf F$ is symmetric, so is $\Sqr (\mathsf F)$.

Finally, a 2-natural transformation $\omega:\mathsf F \to \mathsf G$ induces a double natural transformation $\Sqr (\omega):\Sqr (\mathsf F) \to \Sqr (\mathsf G)$ with the horizontal and vertical transformations in the first two diagrams, and 2-cell part in the third diagram of
$$
\xymatrix{
\mathsf F A \ar[r]^-{\omega_A} \ar[d]_-{\mathsf F f} \ar@{}[rd]|-{\Longdownarrow 1} &
\mathsf G A \ar[d]^-{\mathsf G f} \\
\mathsf F B \ar[r]_-{\omega_B} &
\mathsf G B} \qquad
\xymatrix{
\mathsf F A \ar[d]_-{\omega_A} \ar[r]^-{\mathsf F f} \ar@{}[rd]|-{\Longdownarrow 1} &
\mathsf F B \ar[d]^-{\omega_B} \\
\mathsf G A \ar[r]_-{\mathsf G f} &
\mathsf G B} \qquad
\xymatrix{
\mathsf F A \ar[r]^-{\omega_A} \ar[d]_-{\omega_A} \ar@{}[rd]|-{\Longdownarrow 1}  &
\mathsf G A \ar@{=}[d] \\
\mathsf G A \ar@{=}[r] &
\mathsf G A }
$$
for any 1-cell $f: A\to B$ in $\mathcal M$. It is symmetric monoidal whenever $\omega$ is monoidal.

The so constructed 2-functor $\Sqr :\mathsf{sm}\mbox{-}\mathsf{2Cat} \to \mathsf{sm}\mbox{-}\mathsf{DblCat}$, together with the 2-functors of Example \ref{ex:Mpq} and  Example \ref{ex:Dpq} in the columns, render strictly commutative the diagrams of 2-functors
$$
\xymatrix{
\mathsf{sm}\mbox{-}\mathsf{2Cat} \ar[r]^-{\Sqr } \ar[d]_-{(-)_{01}} &
\mathsf{sm}\mbox{-}\mathsf{DblCat} \ar[d]^-{(-)_{01}} \\
\mathsf{sm}\mbox{-}\mathsf{2Cat} \ar[r]_-{\Sqr }  &
\mathsf{sm}\mbox{-}\mathsf{DblCat} }\qquad
\xymatrix{
\mathsf{sm}\mbox{-}\mathsf{2Cat} \ar[r]^-{\Sqr } \ar[d]_-{(-)_{10}} &
\mathsf{sm}\mbox{-}\mathsf{DblCat} \ar[d]^-{(-)_{10}} \\
\mathsf{sm}\mbox{-}\mathsf{2Cat} \ar[r]_-{\Sqr }  &
\mathsf{sm}\mbox{-}\mathsf{DblCat}. }
$$
Then $\Sqr $ commutes also with the 2-functors $(-)_{pq}$ of Example \ref{ex:Mpq}~(4) and  Example \ref{ex:Dpq}~(4) for any non-negative integers $p$ and $q$.
\end{example}
 

\section{$(p,q)$-oidal monads}
\label{sec:pq-monad}

In this section we describe a 2-functor $\Mnd :\mathsf{sm}\mbox{-}\mathsf{2Cat} \to \mathsf{sm}\mbox{-}\mathsf{DblCat}$. Its object map sends a symmetric strict monoidal 2-category $\mathcal M$ to the double category of monads in $\Sqr (\mathcal M)$ in the sense of \cite{FioreGambinoKock} (so the horizontal 2-category of $\Mnd (\mathcal M)$ is $\mathsf{Mnd}(\mathcal M)$ and its vertical 2-category is $\mathsf{Mnd}_{\mathsf{op}}(\mathcal M)$). 
Together with the 2-functor $(-)_{pq}:\mathsf{sm}\mbox{-}\mathsf{2Cat} \to \mathsf{sm}\mbox{-}\mathsf{2Cat}$ of Example \ref{ex:Mpq}~(4) and the 2-functor $(-)_{pq}: \mathsf{sm}\mbox{-}\mathsf{DblCat} \to \mathsf{sm}\mbox{-}\mathsf{DblCat}$ of Example \ref{ex:Dpq}~(4) for any non-negative integers $p$ and $q$, it fits in the diagram
$$
\xymatrix{
\mathsf{sm}\mbox{-}\mathsf{2Cat} \ar[r]^-{\Mnd } \ar[d]_-{(-)_{pq}} &
\mathsf{sm}\mbox{-}\mathsf{DblCat} \ar[d]^-{(-)_{pq}} \\
\mathsf{sm}\mbox{-}\mathsf{2Cat} \ar[r]_-{\Mnd }  &
\mathsf{sm}\mbox{-}\mathsf{DblCat} }
$$
which commutes up-to 2-natural isomorphism. The 2-functors around it take a symmetric strict monoidal 2-category $\mathcal M$ to a symmetric strict monoidal double category $\Mnd (\mathcal M)_{pq} \cong \Mnd (\mathcal M_{pq})$. Thus the  $(p,q)$-oidal objects in $\Mnd (\mathcal M)$ are the same as the $(p,q)$-oidal monads in $\mathcal M$. Consequently, any symmetric strict monoidal double functor $\mathbf K:\Mnd (\mathcal M)\to \Sqr (\mathcal M)$ induces a symmetric strict monoidal double functor
$$
\xymatrix{
\Mnd (\mathcal M_{pq})\cong \Mnd (\mathcal M)_{pq}  \ar[r]^-{\mathbf K_{pq}} &
\Sqr (\mathcal M)_{pq}\cong \Sqr (\mathcal M_{pq})}
$$
whose object map sends a $(p,q)$-oidal monad in $\mathcal M$ to a $(p+q)$-oidal object in $\mathcal M$.

As anticipated above, for any 2-category $\mathcal M$ we take the double category $\Mnd (\mathcal M)$ of monads in the sense of \cite{FioreGambinoKock} in the double category $\Sqr (\mathcal M)$ of Example \ref{ex:Sqr}. Thus a 0-cell in $\Mnd (\mathcal M)$ is a monad in $\mathcal M$. Below it will be written as a pair $(A,t)$ where $A$ is the 0-cell part and $t:A\to A$ is the 1-cell part. Whenever needed, the multiplication and unit 2-cells will be denoted by $\mu:t.t \to t$ and $\eta:1\to t$, respectively, but in most cases they will not be explicitly written.
A horizontal 1-cell in $\Mnd (\mathcal M)$ is a 1-cell in $\mathsf{Mnd}(\mathcal M)$, see Example \ref{ex:Mnd}.
A vertical 1-cell in $\Mnd (\mathcal M)$ is a 1-cell in $\mathsf{Mnd}_{\mathsf{op}}(\mathcal M)$, see again Example \ref{ex:Mnd}. A 2-cell in $\Mnd (\mathcal M)$ with boundaries as in the first diagram of
$$
\xymatrix@R=23pt{
(A,t) \ar[r]^-{(h,\Xi)} \ar[d]_-{(f,\Phi)} \ar@{}[rd]|-{\Longdownarrow \omega} &
(C,z) \ar[d]^-{(g,\Gamma)} \\
(B,s) \ar[r]_-{(k,\Theta)} &
(D,v)} \qquad
\xymatrix@R=28pt{
A \ar[r]^-h \ar[d]_-f \ar@{}[rd]|-{\Longdownarrow \omega} &
C \ar[d]^-g \\
B \ar[r]_-k &
D} \qquad
\xymatrix@R=26pt{
g.z.h \ar[r]^-{1.\Xi} \ar[d]_-{\Gamma.1} &
g.h.t \ar[r]^-{\omega.1} &
k.f.t \ar[d]^-{1.\Phi} \\
v.g.h \ar[r]_-{1.\omega} &
v.k.f \ar[r]_-{\Theta.1} &
k.s.f}
$$
is a 2-cell in $\mathcal M$ as in the second diagram, such that the third diagram of 2-cells in $\mathcal M$ commutes.
The horizontal composition of horizontal 1-cells is the composition of 1-cells in $\mathsf{Mnd}(\mathcal M)$ and
the vertical composition of vertical 1-cells is the composition of 1-cells in $\mathsf{Mnd}_{\mathsf{op}}(\mathcal M)$.
The horizontal and vertical compositions of 2-cells are given by the same formulae as in \eqref{eq:Sqr_hor} and \eqref{eq:Sqr_vert}, respectively.

A strict monoidal structure $(\otimes,I)$ of $\mathcal M$ induces a strict monoidal structure on $\Mnd (\mathcal M)$ analogously to Example \ref{ex:Mnd}, and a symmetry $x$ on $(\mathcal M,\otimes, I)$ induces a symmetry of $\Mnd (\mathcal M)$ with horizontal and vertical transformations in the first two diagrams, and 2-cell part in the third diagram of 
$$
\xymatrix@C=20pt{
(AB,ts) \ar[r]^-{(x,1)} \ar[d]_-{(fg,\Phi\Gamma)} \ar@{}[rd]|-{\Longdownarrow 1} &
(BA,st) \ar[d]^-{(gf,\Gamma\Phi)} \\
(A'B',t's') \ar[r]_-{(x,1)} &
(B'A',s't')} \quad
\xymatrix@C=28pt{
(AB,ts) \ar[d]_-{(x,1)} \ar[r]^-{(hk,\Xi\Theta)} \ar@{}[rd]|-{\Longdownarrow 1} &
(A'B',t's') \ar[d]^-{(x,1)} \\
(BA,st) \ar[r]_-{(kh,\Theta\Xi)} &
(B'A',s't')} \quad
\xymatrix@C=20pt{
(AB,ts) \ar[r]^-{(x,1)} \ar[d]_-{(x,1)} \ar@{}[rd]|-{\Longdownarrow 1} &
(BA,st) \ar@{=}[d] \\
(BA,st) \ar@{=}[r] &
(BA,st) }
$$
for any 1-cells $(h,\Xi):(A,t)\to (A',t')$ and $(k,\Theta):(B,s) \to (B',s')$ in $\mathsf{Mnd}(\mathcal M)$ and $(f,\Phi):(A,t)\to (A',t')$ and $(g,\Gamma):(B,s) \to (B',s')$ in $\mathsf{Mnd}_{\mathsf{op}}(\mathcal M)$.

Any 2-functor $\mathsf F:\mathcal M\to \mathcal N$ gives rise to a double functor $\Mnd (\mathsf F):\Mnd (\mathcal M) \to \Mnd (\mathcal N)$. On the 0-cells it acts as $\mathsf{Mnd}(\mathsf F)$ of Example \ref{ex:Mnd}; equivalently, as $\mathsf{Mnd}_{\mathsf{op}}(\mathsf F)$ of Example \ref{ex:Mnd}. On the horizontal and vertical 1-cells, $\Mnd (\mathsf F)$ acts as $\mathsf{Mnd}(\mathsf F)$ and $\mathsf{Mnd}_{\mathsf{op}}(\mathsf F)$, respectively. On the 2-cells $\Mnd (\mathsf F)$ acts as $\mathsf F$. 
The double functor $\Mnd (\mathsf F)$ is strict monoidal whenever $\mathsf F$ is strict monoidal and it is symmetric whenever $\mathsf F$ is so.

A 2-natural transformation $\omega: \mathsf F \to \mathsf G$ induces a double natural transformation $\Mnd (\omega):\Mnd (\mathsf F) \to \Mnd (\mathsf G)$ with horizontal and vertical transformations in the first two diagrams, and 2-cell part in the third diagram of 
$$
\xymatrix{
(\mathsf F A,\mathsf F t) \ar[r]^-{(\omega_A,1)} \ar[d]_-{(\mathsf F f,\mathsf F \Phi)} \ar@{}[rd]|-{\Longdownarrow 1} &
(\mathsf G A,\mathsf G t) \ar[d]^-{(\mathsf G f,\mathsf G \Phi)} \\
(\mathsf F B,\mathsf F s) \ar[r]_-{(\omega_B,1)} &
(\mathsf G B,\mathsf G s) }\ 
\xymatrix{
(\mathsf F A,\mathsf F t) \ar[d]_-{(\omega_A,1)} \ar[r]^-{(\mathsf F h,\mathsf F \Xi)} \ar@{}[rd]|-{\Longdownarrow 1} &
(\mathsf F B,\mathsf F s) \ar[d]^-{(\omega_B,1)} \\
(\mathsf G A,\mathsf G t) \ar[r]_-{(\mathsf G h,\mathsf G \Xi)} &
(\mathsf G B,\mathsf G s) }\ 
\xymatrix{
(\mathsf F A,\mathsf F t) \ar[r]^-{(\omega_A,1)} \ar[d]_-{(\omega_A,1)}  \ar@{}[rd]|-{\Longdownarrow 1} &
(\mathsf G A,\mathsf G t)  \ar@{=}[d] \\
(\mathsf G A,\mathsf G t)  \ar@{=}[r] &
(\mathsf G A,\mathsf G t)  }
$$
for any 1-cells $(h,\Xi):(A,t)\to (B,s)$ in $\mathsf{Mnd}(\mathcal M)$ and $(f,\Phi):(A,t)\to (B,s)$ in $\mathsf{Mnd}_{\mathsf{op}}(\mathcal M)$. It is symmetric monoidal whenever $\omega$ is monoidal.

With this we constructed a 2-functor $\Mnd :\mathsf{sm}\mbox{-}\mathsf{2Cat} \to \mathsf{sm}\mbox{-}\mathsf{DblCat}$ which we want to relate next to the 2-functors $(-)_{01}$ of Example \ref{ex:Mpq} and Example \ref{ex:Dpq} .

For any symmetric strict monoidal 2-category $\mathcal M$, a 0-cell of $\Mnd (\mathcal M)_{01}$ is a horizontal pseudomonoid in $\Mnd (\mathcal M)$. Since the horizontal 2-category of $\Mnd (\mathcal M)$ is $\mathsf{Mnd}(\mathcal M)$, this is the same as a pseudomonoid 
$((A,t,\mu,\eta),(m,\tau^2),(u,\tau^0),\alpha,\lambda,\varrho)$
in $\mathsf{Mnd}(\mathcal M)$; equivalently, a 0-cell of $\mathsf{Mnd}(\mathcal M)_{01}$. Thus the isomorphism of Section \ref{sec:0q-monad} takes it to the 0-cell 
$((A,m,u,\alpha,\lambda,\varrho),(t,\tau^2,\tau^0),\mu,\eta)$
of $\mathsf{Mnd}(\mathcal M_{01})$; that is, a 0-cell of $\Mnd (\mathcal M_{01})$ (which was termed an opmonoidal monad in $\mathcal M$).

In the same way, a horizontal 1-cell of $\Mnd (\mathcal M)_{01}$ is a 1-cell of $\mathsf{Mnd}(\mathcal M)_{01}$, taken by the isomorphism of Section \ref{sec:0q-monad} to a 1-cell of $\mathsf{Mnd}(\mathcal M_{01})$; that is, a horizontal 1-cell of $\Mnd (\mathcal M_{01})$.

A vertical 1-cell of $\Mnd (\mathcal M)_{01}$ is an opmonoidal vertical 1-cell of $\Mnd (\mathcal M)$,
$$
\xymatrix{
(A,t) \ar[d]^-{(f,\Phi)} \\
(A',t')} \qquad
\xymatrix{
(AA,tt) \ar[d]_-{(ff,\Phi\Phi)} \ar[r]^-{(m,\tau^2)} \ar@{}[rd]|-{\Longdownarrow{\varphi^2}} &
(A,t) \ar[d]^-{(f,\Phi)} \\
(A'A',t't') \ar[r]_-{(m',\tau^{\prime 2})} & 
(A',t')} \qquad
\xymatrix{
(I,1) \ar@{=}[d] \ar[r]^-{(u,\tau^0)} \ar@{}[rd]|-{\Longdownarrow{\varphi^0}} &
(A,t) \ar[d]^-{(f,\Phi)} \\
(I,1) \ar[r]_-{(u',\tau^{\prime 0})} & 
(A',t').}
$$
It means a 1-cell $(f,\Phi):(A,t) \to (A',t')$ in $\mathsf{Mnd}_{\mathsf{op}}(\mathcal M)$ and a 1-cell $(f,\varphi^2,\varphi^0):(A,m,u,\alpha,$
$\lambda,\varrho) \to (A',m',u',\alpha',\lambda',\varrho')$ in $\mathcal M_{01}$ such that in addition $\varphi^2$ and $\varphi^0$ are 2-cells of $\Mnd (\mathcal M)$; that is, the following diagrams of 2-cells in $\mathcal M$ commute.
$$
\xymatrix{
f.t.m \ar[r]^-{1.\tau^2} \ar[d]_-{\Phi.1} &
f.m.tt  \ar[r]^-{\varphi^2.1} &
m'.ff.tt \ar[d]^-{1.\Phi\Phi} \\
t'.f.m \ar[r]_-{1.\varphi^2} &
t'.m'.ff \ar[r]_-{\tau^{\prime 2}.1} &
m'.t't'.ff} \qquad\qquad
\xymatrix{
f.t.u \ar[r]^-{1.\tau^0} \ar[d]_-{\Phi.1} &
f.u \ar[r]^-{\varphi^0} &
u' \ar@{=}[d] \\
t'.f.u \ar[r]_-{1.\varphi^0} &
t'.u' \ar[r]_-{\tau^{\prime 0}} &
u'.}
$$
Now these conditions can be interpreted, equivalently, as the opmonoidality of the 2-cell $\Phi:(f,\varphi^2,\varphi^0).(t,\tau^2,\tau^0) \to (t',\tau^{\prime 2},\tau^{\prime 0}) .(f,\varphi^2,\varphi^0)$ in $\mathcal M_{01}$. This amounts to saying that $((f,\Phi),\varphi^2,\varphi^0)$ is a vertical 1-cell of $\Mnd (\mathcal M)_{01}$ if and only if $((f,\varphi^2,\varphi^0),\Phi)$ is a vertical 1-cell of $\Mnd (\mathcal M_{01})$.

Finally, a 2-cell of $\Mnd (\mathcal M)_{01}$ is an opmonoidal 2-cell in $\Mnd (\mathcal M)$; that is, a 2-cell of $\mathcal M$ as in the first diagram of
$$
\xymatrix{
A \ar[r]^-h \ar[d]_-f \ar@{}[rd]|-{\Longdownarrow \omega} & 
C  \ar[d]^-g \\
B \ar[r]_-k &
D} \qquad
\xymatrix@R=20pt{
(A,t) \ar[r]^-{(h,\Xi)} \ar[d]_-{(f,\Phi)} \ar@{}[rd]|-{\Longdownarrow \omega} & 
(C,z)  \ar[d]^-{(g,\Gamma)} \\
(B,s) \ar[r]_-{(k,\Theta)} &
(D,v)} \qquad
\xymatrix@C=40pt{
A \ar[r]^-{(h,\chi^2,\chi^0)} \ar[d]_-{(f,\varphi^2,\varphi^0)} \ar@{}[rd]|-{\Longdownarrow \omega} & 
C  \ar[d]^-{(g,\gamma^2,\gamma^0)} \\
B \ar[r]_-{(k,\kappa^2,\kappa^0)} &
D}
$$
which is both a 2-cell in $\Mnd (\mathcal M)$ as in the second diagram and a 2-cell in $\mathcal M_{01}$ as in the third diagram. Now these are the same conditions characterizing a 2-cell of $\Mnd (\mathcal M_{01})$.

With this we constructed an iso double functor $\Mnd (\mathcal M)_{01} \to \Mnd (\mathcal M_{01})$ which is symmetric strict monoidal and 2-natural. This proves the commutativity, up-to the above 2-natural isomorphism, of the first diagram of 2-functors in
$$
\xymatrix{
\mathsf{sm}\mbox{-}\mathsf{2Cat} \ar[r]^-{\Mnd } \ar[d]_-{(-)_{01}} &
\mathsf{sm}\mbox{-}\mathsf{DblCat} \ar[d]^-{(-)_{01}} \\
\mathsf{sm}\mbox{-}\mathsf{2Cat} \ar[r]_-{\Mnd }  &
\mathsf{sm}\mbox{-}\mathsf{DblCat} }\qquad
\xymatrix{
\mathsf{sm}\mbox{-}\mathsf{2Cat} \ar[r]^-{\Mnd } \ar[d]_-{(-)_{10}} &
\mathsf{sm}\mbox{-}\mathsf{DblCat} \ar[d]^-{(-)_{10}} \\
\mathsf{sm}\mbox{-}\mathsf{2Cat} \ar[r]_-{\Mnd }  &
\mathsf{sm}\mbox{-}\mathsf{DblCat}. }
$$
Commutativity of the second diagram follows by symmetric steps (although it does not seem to result from any kind of abstract duality).
Then $\Mnd $ commutes also with the 2-functors $(-)_{pq}$ of Example \ref{ex:Mpq}~(4) and  Example \ref{ex:Dpq}~(4) for any non-negative integers $p$ and $q$.


\section{$(p+q)$-oidal Eilenberg--Moore objects}
\label{sec:K}

In this final section we consider a symmetric strict monoidal 2-category $\mathcal M$ which satisfies the assumptions in all parts of Proposition \ref{prop:V}. Then we construct a symmetric strict monoidal double functor $\mathbf K:\Mnd (\mathcal M)\to \Sqr (\mathcal M)$. Its horizontal 2-functor is the Eilenberg--Moore 2-functor $\mathsf H:\mathsf{Mnd}(\mathcal M)\to \mathcal M$ and its vertical 2-functor is $\mathsf V:\mathsf{Mnd}_{\mathsf{op}}(\mathcal M)\to \mathcal M$ of Proposition \ref{prop:V}. 
Throughout, the hypotheses of Proposition \ref{prop:V} (including those in parts (2) and (3)) are assumed to hold and the notation of Proposition \ref{prop:V} (and its proof) is used.

A 0-cell of $\Mnd (\mathcal M)$ is a monad $(A,t)$ in $\mathcal M$ hence it can be seen as a 0-cell in either 2-category $\mathsf{Mnd}(\mathcal M)$ or $\mathsf{Mnd}_{\mathsf{op}}(\mathcal M)$. Then we may apply to it either one of the Eilenberg--Moore 2-functor $\mathsf H:\mathsf{Mnd}(\mathcal M)\to \mathcal M$ or the 2-functor $\mathsf V:\mathsf{Mnd}_{\mathsf{op}}(\mathcal M)\to \mathcal M$ of Proposition \ref{prop:V}. Both of them yield the Eilenberg-Moore object $\mathbf K(A,t):=A^t$.

A horizontal 1-cell of $\Mnd (\mathcal M)$ is a 1-cell $(h,\Xi)$ of $\mathsf{Mnd}(\mathcal M)$ so it makes sense to put $\mathbf K(h,\Xi):=\mathsf H(h,\Xi)$. Similarly, a vertical 1-cell of $\Mnd (\mathcal M)$ is a 1-cell $(f,\Phi)$ of $\mathsf{Mnd}_{\mathsf{op}}(\mathcal M)$ so we may put $\mathbf K(f,\Phi):=\mathsf V(f,\Phi)$. 

For the definition of the action of the desired double functor $\mathbf K$ on a 2-cell
$$
\xymatrix{
(A,t) \ar[r]^-{(h,\Xi)} \ar[d]_-{(n,\Phi)} \ar@{}[rd]|-{\Longdownarrow \omega} &
(C,z) \ar[d]^-{(g,\Gamma)} \\
(B,s) \ar[r]_-{(k,\Theta)} &
(D,v),} 
$$
universality of the coequalizer in the top row of
\begin{equation} \label{eq:K}
\xymatrix@C=1pt@R=5pt{
& f^v.g.u^z.f^z.u^z.\mathsf H(h,\Xi) \ar@/^.8pc/[rd]^-{1.1.1.\epsilon^z.1} \\ 
f^v.g.z.u^z.\mathsf H(h,\Xi) \ar@{=}@/^.8pc/[ru]  \ar@/_.8pc/[rd]_-{1.\Gamma.1.1} \ar@{=}[dd] &&
f^v.g.u^z.\mathsf H(h,\Xi) \ar[rrrrr]^-{\pi(g,\Gamma).1} \ar@{=}[dd]  &&&&&
\mathsf V(g,\Gamma).\mathsf H(h,\Xi) \ar@{-->}[dddddddd]^-{\mathbf K \omega} \\
&  \stackrel{\displaystyle f^v.v.g.u^z.\mathsf H(h,\Xi)=}{f^v.u^v.f^v.g.u^z.\mathsf H(h,\Xi)} \ar@/_.8pc/[ru]_(.65){\epsilon^v.1.1.1.1} \\
f^v.g.z.h.u^t  \ar[dd]_-{1.1.\Xi.1} && 
f^v.g.h.u^t \ar[dddd]^-{1.\omega.1}  \\
\\
f^v.g.h.t.u^t \ar[dd]_-{1.\omega.1.1} \\
\\
f^v.k.n.t.u^t \ar[dd]_-{\mathsf H\Theta.1.1.1} &&
f^v.k.n.u^t \ar[dd]^-{\mathsf H\Theta.1.1} \\
& \mathsf H(k,\Theta).f^s.n.u^t.f^t.u^t \ar@/^.8pc/[rd]^(.65){1.1.1.1.\epsilon^t} \\
\mathsf H(k,\Theta).f^s.n.t.u^t \ar@{=}@/^.8pc/[ru] \ar@/_.8pc/[rd]_-{1.1.\Phi.1} &&
\mathsf H(k,\Theta).f^s.n.u^t \ar[rrrrr]^-{1.\pi(n,\Phi)} &&&&&
\mathsf H(k,\Theta).\mathsf V(n,\Phi) \\
& \stackrel{\displaystyle \mathsf H(k,\Theta).f^s.s.n.u^t=}{\mathsf H(k,\Theta).f^s.u^s.f^s.n.u^t} \ar@/_.8pc/[ru]_(.65){1.\epsilon^s.1.1.1}}
\end{equation}
is used, see \eqref{eq:Linton}.
Note that for any 1-cell $(k,\Theta):(B,s) \to (D,v)$ in $\mathsf{Mnd}(\mathcal M)$ the multiplicativity condition 
\begin{equation} \label{eq:monmor}
\xymatrix{
v.v.k \ar[rr]^-{\mu.1} \ar[d]_-{1.\Theta} &&
v.k \ar[d]^-\Theta \\
v.k.s \ar[r]_-{\Theta.1} &
k.s.s \ar[r]_-{1.\mu} &
k.s}
\end{equation}
holds, where $\mu:v.v\to v$ is the multiplication of the monad $(D,v)$ and $\mu:s.s\to s$ is the multiplication of the monad $(B,s)$. This can be interpreted as $\Theta$ being a 2-cell in $\mathsf{Mnd}(\mathcal M)$ as on the left of
$$
\xymatrix@R=20pt{
(B,1) \ar[r]^-{(k,1)} \ar[d]_-{(s,\mu)} \ar@{}[rd]|-{\Longdownarrow \Theta} &
(D,1) \ar[d]^-{(v,\mu)} \\
(B,s) \ar[r]_-{(k,\Theta)} &
(D,v)} \qquad \qquad
\xymatrix{
B \ar[r]^-{k} \ar[d]_-{f^s} \ar@{}[rd]|-{\Longdownarrow {\mathsf H\Theta}} &
D \ar[d]^-{f^v} \\
B^s \ar[r]_-{\mathsf H(k,\Theta)} &
D^v.}
$$
Thus the Eilenberg--Moore 2-functor $\mathsf H$ takes it to a 2-cell of $\mathcal M$ on the right. The resulting 2-cell $\mathsf H\Theta$ occurs at the bottom of the vertical paths of \eqref{eq:K}.

In order for the 2-cell $\mathbf K \omega$ to be well-defined, the diagram of \eqref{eq:K} should serially commute. The square on its left commutes if we take the upper ones of the parallel arrows: use that for the Eilenberg--Moore 2-functor $\mathsf H$ the diagram
\begin{equation} \label{eq:EM}
\xymatrix{
z.u^z.\mathsf H(h,\Xi) \ar@{=}[r] \ar@{=}[d] &
u^z.f^z.u^z.\mathsf H(h,\Xi) \ar[r]^-{1.\epsilon^z.1} &
u^z.\mathsf H(h,\Xi)  \ar@{=}[d] \\
z.h.u^t \ar[r]_-{\Xi.1} &
h.t.u^t=h.u^t.f^t.u^t \ar[r]_-{1.1.\epsilon^t} &
h.u^t}
\end{equation}
commutes by the 2-naturality of the counit $(u^t,1.\epsilon^t)$ of the Eilenberg--Moore 2-ad\-junction, and use the middle four interchange law in the 2-category $\mathcal M$. Commutativity of the square on the left of \eqref{eq:K} with the lower ones of the parallel arrows is slightly more involved; it is checked in Figure \ref{fig:K}. The region marked by $(\ast)$ commutes because $\omega$ is a 2-cell of $\Mnd (\mathcal M)$. The image of the square marked by $(\ast\ast)$ under the functor $u^v.(-):\mathcal M(A^t,D^v)\to \mathcal M(A^t,D)$ commutes by \eqref{eq:monmor}. Hence this square commutes by the faithfulness of $u^v.(-):\mathcal M(A^t,D^v)\to \mathcal M(A^t,D)$.
\begin{amssidewaysfigure}
\centering
\scalebox{1}{
\xymatrix@C=65pt@R=25pt{
f^v.g.z.u^z.\mathsf H(h,\Xi) \ar[r]^-{1.\Gamma.1.1} \ar@{=}[d] &
f^v.v.g.u^z.\mathsf H(h,\Xi)  \ar@{=}[r] &
f^v.u^v.f^v.g.u^z.\mathsf H(h,\Xi) \ar[r]^-{\epsilon^v.1.1.1.1} &
f^v.g.u^z.\mathsf H(h,\Xi) \ar@{=}[d] \\
f^v.g.z.h.u^t \ar[r]^-{1.\Gamma.1.1} \ar[d]_-{1.1.\Xi.1} \ar@{}[rdd]|-{(\ast)} &
f^v.v.g.h.u^t \ar@{=}[r] \ar[d]^-{1.1.\omega.1} &
f^v.u^v.f^v.g.h.u^t \ar[r]^-{\epsilon^v.1.1.1.1} &
f^v.g.h.u^t \ar[d]^-{1.\omega.1} \\
f^v.g.h.t.u^t  \ar[d]_-{1.\omega.1.1} &
f^v.v.k.n.u^t \ar@{=}[r] \ar[d]^-{1.\Theta.1.1} \ar@{}[rrdd]|-{(\ast\ast)} &
f^v.u^v.f^v.k.n.u^t \ar[r]^-{\epsilon^v.1.1.1.1} & 
f^v.k.n.u^t  \ar[dd]^-{\mathsf H \Theta.1.1} \\
f^v.k.n.t.u^t \ar[r]^-{1.1.\Phi.1} \ar[d]_-{\mathsf H \Theta.1.1.1} &
f^v.k.s.n.u^t \ar[d]^-{\mathsf H \Theta.1.1.1} && \\
\mathsf H(k,\Theta).f^s.n.t.u^t \ar[r]_-{1.1.\Phi.1} &
\mathsf H(k,\Theta).f^s.s.n.u^t  \ar@{=}[r] &
\mathsf H(k,\Theta).f^s.u^s.f^s.n.u^t \ar[r]_-{1.\epsilon^s.1.1.1} &
\mathsf H(k,\Theta).f^s.n.u^t}}
\caption{
Serial commutativity of \eqref{eq:K}}
\label{fig:K}
\end{amssidewaysfigure}

We turn to showing that the so defined maps combine into a double functor $\mathbf K$. 
The maps in question preserve horizontal and vertical identity 1-cells, as well as the horizontal composition of horizontal 1-cells and the vertical composition of vertical 1-cells since the 2-functors $\mathsf H$ and $\mathsf V$ preserve identity 1-cells and compositions of 1-cells (see Proposition \ref{prop:V} about $\mathsf V$).
Preservation of identity 2-cells as on the left of
$$
\xymatrix{
(A,t) \ar@{=}[r] \ar[d]_-{(n,\Phi)} \ar@{}[rd]|-{\Longdownarrow 1} &
(A,t) \ar[d]^-{(n,\Phi)} \\
(B,s) \ar@{=}[r] &
(B,s)} \qquad\qquad
\xymatrix{
(A,t) \ar@{=}[d] \ar[r]^-{(h,\Xi)} \ar@{}[rd]|-{\Longdownarrow 1} &
(C,z) \ar@{=}[d] \\
(A,t) \ar[r]_-{(h,\Xi)} &
(C,z)}
$$
is obvious since then also the middle column of \eqref{eq:K} is an identity 2-cell. For an identity 2-cell as on the right, \eqref{eq:K} takes the form
$$
\xymatrix@C=35pt@R=10pt{
f^z.z.u^z.\mathsf H (h,\Xi) \ar@<2pt>[r] \ar@<-2pt>[r] \ar@{=}[d] &
f^z.u^z.\mathsf H (h,\Xi) \ar[r]^-{\epsilon^z.1}  \ar@{=}[d] &
\mathsf H (h,\Xi)  \ar@{=}[ddd] \\
f^z.z.h.u^t \ar[d]_-{1.\Xi.1} &
f^z.h.u^t \ar[dd]^-{\mathsf H \Xi.1} \\
f^z.h.t.u^t \ar[d]_-{\mathsf H \Xi.1.1} \\ 
\mathsf H (h,\Xi).f^t.t.u^t \ar@<2pt>[r] \ar@<-2pt>[r]  &
\mathsf H (h,\Xi).f^t.u^t \ar[r]_-{1.\epsilon^t} &
\mathsf H (h,\Xi).}
$$
The square on its right is taken by the functor $u^z.(-):\mathcal M(A^t,C^z) \to \mathcal M(A^t,C)$ to the commutative square of
 \eqref{eq:EM}. Hence it commutes by the faithfulness of $u^z.(-):\mathcal M(A^t,C^z) \to \mathcal M(A^t,C)$ so that also identity 2-cells of this kind are preserved by $\mathbf K$.

For 2-cells
$$
\xymatrix{
(A,t) \ar[r]^-{(h,\Xi)} \ar[d]_-{(n,\Phi)} \ar@{}[rd]|-{\Longdownarrow \omega} &
(C,z) \ar[d]^-{(g,\Gamma)} \\
(B,s) \ar[r]_-{(k,\Theta)} &
(D,v)}
\raisebox{-17pt}{\quad and \quad}
 \xymatrix{
(C,z) \ar[r]^-{(h',\Xi')} \ar[d]_-{(g,\Gamma)} \ar@{}[rd]|-{\Longdownarrow {\omega'}} &
(E,y) \ar[d]^-{(l,\Lambda)} \\
(D,v) \ar[r]_-{(k',\Theta')} &
(F,w)}
$$
of $\Mnd (\mathcal M)$, the 2-cells $(1.\mathbf K \omega)\updot(\mathbf K \omega'.1)$ and $\mathbf K((1.\omega)\updot(\omega'.1))$ are defined as the unique 2-cells making the respective diagrams of Figure \ref{fig:Khorizontal} commute.
The top and bottom rows of the diagrams of Figure \ref{fig:Khorizontal} are equal epimorphisms. So from the equality of their left columns we infer the equality of their right columns. This proves that $\mathbf K$ preserves the horizontal composition of 2-cells.
\begin{amssidewaysfigure}
\centering
\xymatrix@C=35pt@R=26pt{
f^w.l.u^y.\mathsf H(h',\Xi').\mathsf H(h,\Xi) \ar[r]^-{\pi(l,\Lambda).1} \ar@{=}[d] &
\mathsf V(l,\Lambda).\mathsf H(h',\Xi').\mathsf H(h,\Xi) \ar[ddd]_-{\mathbf K \omega'.1} 
&
f^w.l.u^y.\mathsf H(h',\Xi').\mathsf H(h,\Xi) \ar[r]^-{\pi(l,\Lambda).1} \ar@{=}[d] &
\mathsf V(l,\Lambda).\mathsf H(h',\Xi').\mathsf H(h,\Xi) \ar[dddddd]_-{\mathbf K((1.\omega)\updot(\omega'.1))}
\\
f^w.l.h'.u^z.\mathsf H(h,\Xi) \ar[d]^-{1.\omega'.1.1}  &
&
f^w.l.u^y.\mathsf H((h',\Xi').(h,\Xi)) \ar@{=}[d] 
\\
f^w.k'.g .u^z.\mathsf H(h,\Xi) \ar[d]^-{\mathsf H \Theta'.1.1.1} &
&
f^w.l.h'.h.u^t \ar[d]^-{1.\omega'.1.1}
\\
\mathsf H(k',\Theta').f^v.g.u^z.\mathsf H(h,\Xi) \ar[r]^-{1.\pi(g,\Gamma).1} \ar@{=}[d] &
\mathsf H(k',\Theta').\mathsf V(g,\Gamma).\mathsf H(h,\Xi) \ar[ddd]_-{1.\mathbf K \omega} 
&
f^w.k'.g.h.u^t \ar[d]^-{1.1.\omega.1} 
\\
\mathsf H(k',\Theta').f^v.g.h.u^t \ar[d]^-{1.1.\omega.1} &
&
f^w.k'.k.n.u^t \ar[d]^-{\mathsf H((1.\Theta)\updot(\Theta'.1)).1.1} 
\\
\mathsf H(k',\Theta').f^v.k.n.u^t \ar[d]^-{1.\mathsf H \Theta.1.1} &
&
\mathsf H((k',\Theta').(k,\Theta)).f^s.n.u^t \ar@{=}[d] 
\\
\mathsf H(k',\Theta').\mathsf H(k,\Theta).f^s.n.u^t \ar[r]_-{1.1.\pi(n,\Phi)} &
\mathsf H(k',\Theta').\mathsf H(k,\Theta).\mathsf V(n,\Phi)
&
\mathsf H(k',\Theta').\mathsf H(k,\Theta).f^s.n.u^t \ar[r]_-{1.1.\pi(n,\Phi)} &
\mathsf H(k',\Theta').\mathsf H(k,\Theta).\mathsf V(n,\Phi)} 
\caption{$\mathbf K$ preserves the horizontal composition}
\label{fig:Khorizontal}
\end{amssidewaysfigure}

For 2-cells
$$
\xymatrix{
(A,t) \ar[r]^-{(h,\Xi)} \ar[d]_-{(n,\Phi)} \ar@{}[rd]|-{\Longdownarrow \omega} &
(C,z) \ar[d]^-{(g,\Gamma)} \\
(B,s) \ar[r]_-{(k,\Theta)} &
(D,v)}
\raisebox{-17pt}{\quad and \quad}
 \xymatrix{
(B,s) \ar[r]^-{(k,\Theta)} \ar[d]_-{(n',\Phi')} \ar@{}[rd]|-{\Longdownarrow {\omega'}} &
(D,v) \ar[d]^-{(g',\Gamma')} \\
(E,y) \ar[r]_-{(l,\Lambda)} &
(F,w)}
$$
of $\Mnd (\mathcal M)$, the 2-cells $\mathbf K((\omega'.1)\updot(1.\omega))$ and $(\mathbf K \omega'.1)\updot(1.\mathbf K \omega)$ are defined as the unique 2-cells making commute the diagram of Figure \ref{fig:Kvertical_1} and the diagram of Figure \ref{fig:Kvertical_2}, respectively. 
The top and bottom rows of the diagrams of Figure \ref{fig:Kvertical_1} and Figure \ref{fig:Kvertical_2} are equal epimorphisms. So from the equality of their left columns we infer the equality of their right columns. This proves that $\mathbf K$ preserves the vertical composition of 2-cells. Here again, in Figure \ref{fig:Kvertical_1} the region marked by $(\ast)$ commutes since $\omega'$ is a 2-cell of $\Mnd (\mathcal M)$ and the region marked by $(\ast\ast)$ commutes by the same reason as the region $(\ast\ast)$ of Figure \ref{fig:K}.

\begin{amssidewaysfigure}
\centering
\vspace*{2cm}
\scalebox{.85}{$
\xymatrix@C=8pt@R=25pt{
f^w.g'.u^v.f^v.g.u^z.\mathsf H(h,\Xi) \ar[rrrr]^-{\raisebox{8pt}{${}_{\pi(g',\Gamma').1.1.1.1}$}} \ar@{=}[dd] \ar@{=}[rd] \ar@{}[rrrrd]|-{\textrm{Lemma}~\ref{lem:V}} &&&&
\mathsf V (g',\Gamma').f^v.g.u^z.\mathsf H(h,\Xi) \ar[r]^-{\raisebox{8pt}{${}_{1.\pi(g,\Gamma).1}$}} \ar@{=}[d] \ar@{}[rd]|-{\eqref{eq:V-hori}} &
\mathsf V (g',\Gamma').\mathsf V (g,\Gamma).\mathsf H(h,\Xi) \ar@{=}[d] \\
& f^w.g'.v.g.u^z.\mathsf H(h,\Xi) \ar[r]_-{\raisebox{-8pt}{${}_{1.\Gamma'.1.1.1}$}} &
f^w.w.g'.g.u^z.\mathsf H(h,\Xi) \ar@{=}[r] &
f^w.u^w.f^w.g'.g.u^z.\mathsf H(h,\Xi) \ar[r]_-{\raisebox{-8pt}{${}_{\epsilon^w.1.1.1.1.1}$}} &
f^w.g'.g.u^z.\mathsf H(h,\Xi) \ar[r]_-{\raisebox{-8pt}{${}_{\pi((g',\Gamma').(g,\Gamma)).1}$}} \ar@{=}[d] &
\mathsf V ((g',\Gamma').(g,\Gamma)).\mathsf H(h,\Xi) \ar[dddddd]_-{\mathbf K((\omega'.1)\updot(1.\omega))} \\
f^w.g'.u^v.f^v.g.h.u^t \ar@{=}[r]  \ar[d]^-{1.1.1.1.\omega.1} &
f^w.g'.v.g.h.u^t \ar[r]^-{1.\Gamma'.1.1.1}  &
f^w.w.g'.g.h.u^t \ar@{=}[r] &
f^w.u^w.f^w.g'.g.h.u^t \ar[r]^-{\epsilon^w.1.1.1.1.1} &
f^w.g'.g.h.u^t \ar[d]^-{1.1.\omega.1} \\
f^w.g'.u^v.f^v.k.n.u^t \ar@{=}[r] \ar[d]^-{1.1.1.\mathsf H\Theta.1.1} &
f^w.g'.v.k.n.u^t \ar[r]^-{1.\Gamma'.1.1.1} \ar[dd]^-{1.1.\Theta.1.1} \ar@{}[rddd]|-{(\ast)} &
f^w.w.g'.k.n.u^t \ar@{=}[r] \ar[dd]^-{1.1.\omega'.1.1} &
f^w.u^w.f^w.g'.k.n.u^t \ar[r]^-{\epsilon^w.1.1.1.1.1} &
f^w.g'.k.n.u^t \ar[dd]^-{1.\omega'.1.1}  \\
f^w.g'.u^v.\mathsf H(k,\Theta).f^s.n.u^t \ar@{=}[d] \\
f^w.g'.k.u^s.f^s.n.u^t \ar@{=}[r] \ar[d]^-{1.\omega'.1.1.1.1} &
f^w.g'.k.s.n.u^t \ar[d]^-{1.\omega'.1.1.1} &
f^w.w.l.n'.n.u^t \ar@{=}[r] \ar[d]^-{1.\Lambda.1.1.1} \ar@{}[rrdd]|-{(\ast\ast)}& 
f^w.u^w.f^w.l.n'.n.u^t \ar[r]^-{\epsilon^w.1.1.1.1.1} &
f^w.l.n'.n.u^t \ar[dd]^-{\mathsf H\Lambda.1.1.1} \\
f^w.l.n'.u^s.f^s.n.u^t \ar@{=}[r] \ar[dd]^-{\mathsf H\Lambda.1.1.1.1.1} &
f^w.l.n'.s.n.u^t \ar[r]^-{1.1.\Phi'.1.1} &
f^w.l.y.n'.n.u^t \ar[d]^-{\mathsf H\Lambda.1.1.1.1} \\
\ar@{}[rrrrd]|-{\textrm{Lemma}~\ref{lem:V}} 
& \mathsf H(l,\Lambda).f^y.n'.s.n.u^t \ar@{=}[ld] \ar[r]^-{\raisebox{8pt}{${}_{1.1.\Phi'.1.1}$}} &
\mathsf H(l,\Lambda).f^y.y.n'.n.u^t \ar@{=}[r] &
\mathsf H(l,\Lambda).f^y.u^y.f^y.n'.n.u^t \ar[r]^-{\raisebox{8pt}{${}_{1.\epsilon^y.1.1.1.1}$}} &
\mathsf H(l,\Lambda).f^y.n'.n.u^t \ar@{=}[d] \ar[r]^-{\raisebox{8pt}{${}_{1.\pi((n',\Phi').(n,\Phi))}$}} \ar@{}[rd]|-{\eqref{eq:V-hori}} &
\mathsf H(l,\Lambda) \mathsf V((n',\Phi').(n,\Phi)) \ar@{=}[d] \\
\mathsf H(l,\Lambda).f^y.n'.u^s.f^s.n.u^t \ar[rrrr]_-{\raisebox{-8pt}{${}_{1.\pi(n',\Phi').1.1.1}$}} &&&&
\mathsf H(l,\Lambda).\mathsf V(n',\Phi').f^s.n.u^t \ar[r]_-{\raisebox{-8pt}{${}_{1.1.\pi(n,\Phi)}$}} &
\mathsf H(l,\Lambda).\mathsf V(n',\Phi').\mathsf V(n,\Phi)}$}
\caption{$\mathbf K$ preserves the vertical composition --- first part}
\label{fig:Kvertical_1}
\end{amssidewaysfigure}

\begin{amssidewaysfigure}
\centering
\xymatrix@C=100pt@R=25pt{
f^w.g'.u^v.f^v.g.u^z.\mathsf H(h,\Xi) \ar[r]^-{\pi(g',\Gamma').1.1.1.1} \ar@{=}[d] &
\mathsf V (g',\Gamma').f^v.g.u^z.\mathsf H(h,\Xi) \ar[r]^-{1.\pi(g,\Gamma).1} \ar@{=}[d] &
\mathsf V (g',\Gamma').\mathsf V (g,\Gamma).\mathsf H(h,\Xi) \ar[ddd]^-{1.\mathbf K \omega} \\
f^w.g'.u^v.f^v.g.h.u^t \ar[d]_-{1.1.1.1.\omega.1} &
\mathsf V (g',\Gamma').f^v.g.h.u^t \ar[d]^-{1.1.\omega.1} \\
f^w.g'.u^v.f^v.k.n.u^t \ar[d]_-{1.1.1.\mathsf H\Theta.1.1} &
\mathsf V (g',\Gamma').f^v.k.n.u^t \ar[d]^-{1.\mathsf H\Theta.1.1} \\
f^w.g'.u^v.\mathsf H(k,\Theta).f^s.n.u^t \ar[r]^-{\pi(g',\Gamma').1.1.1.1} \ar@{=}[d] &
\mathsf V (g',\Gamma').\mathsf H(k,\Theta).f^s.n.u^t \ar[r]^-{1.1.\pi(n,\Phi)} \ar[ddd]^-{\mathbf K \omega'.1.1.1} &
\mathsf V (g',\Gamma').\mathsf H(k,\Theta).\mathsf V (n,\Phi) \ar[ddd]^-{\mathbf K \omega'.1} \\
f^w.g'.k.u^s.f^s.n.u^t \ar[d]_-{1.\omega'.1.1.1.1} \\
f^w.l.n'.u^s.f^s.n.u^t \ar[d]_-{\mathsf H \Lambda.1.1.1.1.1} \\
\mathsf H(l,\Lambda).f^y.n'.u^s.f^s.n.u^t \ar[r]_-{1.\pi(n',\Phi').1.1.1} &
\mathsf H(l,\Lambda).\mathsf V(n',\Phi').f^s.n.u^t \ar[r]_-{1.1.\pi(n,\Phi)} &
\mathsf H(l,\Lambda).\mathsf V(n',\Phi').\mathsf V(n,\Phi)}
\caption{$\mathbf K$ preserves the vertical composition --- second part}
\label{fig:Kvertical_2}
\end{amssidewaysfigure}

\begin{theorem} \label{thm:K}
Consider a 2-category $\mathcal M$ which admits Eilenberg--Moore construction. If the coequalizer \eqref{eq:Linton} exists for all 1-cells in $\mathsf{Mnd}_{\mathsf {op}}(\mathcal M)$, and it is preserved by the horizontal composition on either side with any 1-cell in $\mathcal M$, then the following hold.

(1) The maps, constructed preceding the theorem, constitute a double functor $\mathbf K:\Mnd (\mathcal M)\to \Sqr (\mathcal M)$.

(2) If moreover $\mathcal M$ is a strict monoidal 2-category which admits monoidal Eilenberg--Moore construction, then the double functor $\mathbf K$ of part (1) is strict monoidal. 

(3) If in addition the strict monoidal 2-category $\mathcal M$ has a symmetry then the strict monoidal double functor $\mathbf K$ of part (2) is symmetric as well.
\end{theorem}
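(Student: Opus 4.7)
The strategy is to assemble part (1) from the detailed verifications already carried out before the statement, and then reduce parts (2) and (3) to the corresponding monoidality and symmetry properties of $\mathsf H$ and $\mathsf V$ established in Proposition \ref{prop:mon-EM} and Proposition \ref{prop:V}.

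For part (1), the action of $\mathbf K$ on $0$-cells, horizontal 1-cells and vertical 1-cells is given by the 2-functors $\mathsf H$ and $\mathsf V$, so preservation of horizontal/vertical identities and compositions of 1-cells is inherited from them (using Proposition \ref{prop:V}(1) on the vertical side). The 2-cell action is well defined because the serial commutativity of \eqref{eq:K} was verified in Figure \ref{fig:K}, and preservation of identity 2-cells, horizontal composition and vertical composition of 2-cells is exactly what Figures \ref{fig:Khorizontal}, \ref{fig:Kvertical_1} and \ref{fig:Kvertical_2} establish. Thus part (1) is essentially already complete, and my plan is to state this and refer back to those diagrams.

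For part (2), strict monoidality on horizontal 1-cells follows from Proposition \ref{prop:mon-EM} together with Definition \ref{def:mon-EM}, which in particular gives $\mathbf K(I,1)=I$ and $\mathbf K((A,t)(B,s))=A^tB^s=\mathbf K(A,t)\mathbf K(B,s)$; strict monoidality on vertical 1-cells is Proposition \ref{prop:V}(2). The substantive step is the 2-cell check: for $\omega$ and $\vartheta$ in $\Mnd(\mathcal M)$ one needs $\mathbf K(\omega\vartheta)=\mathbf K\omega\,\mathbf K\vartheta$. The plan is to identify the coequalizer fork \eqref{eq:K} defining $\mathbf K(\omega\vartheta)$ with the monoidal product in $\Sqr(\mathcal M)$ of the two forks defining $\mathbf K\omega$ and $\mathbf K\vartheta$. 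This product is still a coequalizer by the standing hypothesis that horizontal composition preserves Linton coequalizers. Repeated use of the monoidality of Eilenberg--Moore (in the forms $A^tB^s=(AB)^{ts}$, $\epsilon^{ts}=\epsilon^t\epsilon^s$ from the proof of Proposition \ref{prop:mon-EM}, and the compatibility $\pi(g,\Gamma)\pi(g',\Gamma')=\pi((g,\Gamma)(g',\Gamma'))$ of the vertical-coequalizer projections) identifies the two defining diagrams, and universality then forces the equality.

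For part (3), it remains to check that $\mathbf K$ sends the symmetry of $\Mnd(\mathcal M)$ to the symmetry of $\Sqr(\mathcal M)$. Since the symmetry of $\Mnd(\mathcal M)$ has horizontal and vertical 1-cell components of the form $(x,1)$ together with identity 2-cell parts, the symmetry of $\mathsf H$ (Proposition \ref{prop:mon-EM}) handles the horizontal side, the symmetry of $\mathsf V$ (Proposition \ref{prop:V}(3)) handles the vertical side, and preservation of identity 2-cells from part (1) handles the 2-cell component. Matching against the description of the symmetry of $\Sqr(\mathcal M)$ in Example \ref{ex:Sqr} finishes the proof.

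The main obstacle I expect is the 2-cell verification in part (2): one must chase the many-row diagram \eqref{eq:K} through a monoidal product to identify it with the corresponding diagram for the tensor product monads, simultaneously tracking the monoidal Eilenberg--Moore identifications on every row. Once this identification is in place, the desired equality of 2-cells is a one-line universality argument.
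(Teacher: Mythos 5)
Your proposal is correct and follows essentially the same route as the paper, whose entire proof reads: part (1) was checked during the construction (via the serial commutativity of \eqref{eq:K} and Figures \ref{fig:Khorizontal}--\ref{fig:Kvertical_2}), and parts (2) and (3) follow from the strict monoidality and symmetry of $\mathsf H$ and $\mathsf V$ established in Proposition \ref{prop:mon-EM} and Proposition \ref{prop:V}. Your elaboration of the 2-cell check in part (2) --- identifying the defining coequalizer fork for $\mathbf K(\omega\vartheta)$ with the monoidal product of the forks for $\mathbf K\omega$ and $\mathbf K\vartheta$ and invoking universality --- is precisely the detail the paper leaves implicit in the phrase ``clearly follow,'' and it is sound.
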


\begin{proof}
We checked part (1) during the construction. Parts (2) and (3) clearly follow from the strict monoidality and the symmetry, respectively, of the 2-functors $\mathsf H$ and $\mathsf V$  under the stated conditions (see Proposition \ref{prop:mon-EM} and Proposition \ref{prop:V}).
\end{proof}

In the setting of Theorem \ref{thm:K}~(3), the object map of the induced symmetric strict monoidal double functor
$$
\xymatrix{
\Mnd (\mathcal M_{pq})\cong \Mnd (\mathcal M)_{pq}  \ar[r]^-{\mathbf K_{pq}} &
\Sqr (\mathcal M)_{pq}\cong \Sqr (\mathcal M_{pq})}
$$
sends a $(p,q)$-oidal monad $(A,t)$ in $\mathcal M$ to its $(p+q)$-oidal Eilenberg--Moore object $A^t$. While $q$ ones of the pseudomonoid structures of $A^t$ are liftings of the respective pseudomonoid structures of $A$ along the 1-cell $u^t:A^t\to A$,  the other $p$ pseudomonoid structures of $A^t$ are liftings of the corresponding pseudomonoid structures of $A$ along the 1-cell $f^t:A\to A^t$.

Applying this to the particular symmetric strict monoidal 2-category of Remark \ref{rem:reflCat}, we obtain very similar results to \cite[Corollary 6.4 and Theorem 9.1]{AguiarHaimLopezFranco}.
The only difference is that our assumptions are slightly stronger. Recall that a $(p,q)$-oidal object in the symmetric strict monoidal 2-category of Remark \ref{rem:reflCat} is in fact a $(p+q)$-odial category with some further properties. They include, in particular, that
all monoidal products preserve reflexive coequalizers --- not only the first $p$ ones as required in \cite[Theorem 9.1]{AguiarHaimLopezFranco}.

\begin{remark} \label{rem:K_adjoint}
As already discussed in the Introduction, a 2-category $\mathcal M$ is said to admit  Eilenberg--Moore construction if the inclusion 2-functor $\mathcal M\to \mathsf{Mnd}(\mathcal M)$ possesses a right 2-adjoint $\mathsf H$. 
As it is explained below, under the standing assumptions of Theorem \ref{thm:K} (see also Proposition \ref{prop:V}), there is an analogous interpretation of the double functor $\mathbf K:\Mnd (\mathcal M)\to \Sqr (\mathcal M)$ of Theorem \ref{thm:K}~(1) as the right adjoint of an inclusion type double functor, in the sense of \cite{FioreGambinoKock:free}.  In the terminology of \cite{FioreGambinoKock:free} this means that $\mathbf K$ provides Eilenberg--Moore construction for the double category $\Sqr (\mathcal M)$.  We are grateful to an anonymous referee for raising this question.

(1) Any 0-cell of a 2-category $\mathcal M$ can be seen as a monad with identity 1-cell part and identity multiplication and unit 2-cells. Any 1-cell in $\mathcal M$ can be regarded as a monad morphism between these trivial monads --- that is, as a horizontal 1-cell in $\Mnd (\mathcal M)$ --- with identity 2-cell part. Symmetrically, it can be regarded as a vertical 1-cell in $\Mnd (\mathcal M)$ with identity 2-cell part. Finally, any 2-cell of $\Sqr (\mathcal M)$ yields a 2-cell in $\Mnd (\mathcal M)$ surrounded by the above trivial 1-cells. These maps constitute the inclusion double functor $\mathbf I:\Sqr (\mathcal M) \to \Mnd (\mathcal M)$. 

Assume now that  $\mathcal M$ admits Eilenberg--Moore construction; and that in $\mathcal M$ the coequalizer \eqref{eq:Linton} exists for all 1-cells in $\mathsf{Mnd}_{\mathsf {op}}(\mathcal M)$ and it is preserved by the horizontal composition on either side with any 1-cell in $\mathcal M$. Then the double functor $\mathbf K$ of Theorem \ref{thm:K}~(1) is available and it turns out to be the right adjoint of the above inclusion double functor $\mathbf I$ in the 2-category of double categories, double functors and {\em horizontal} transformations. The unit of the adjunction is trivial while the counit has the horizontal 1-cell part and the 2-cell part
\begin{equation} \label{eq:K_counit}
\xymatrix@C=30pt{
(A^t,1) \ar[r]^-{(u^t,1.\epsilon^t)}  &
(A,t)}  \qquad \qquad
\xymatrix@C=30pt{
g.u^t \ar[r]^-{\eta^s.1.1} &
u^s.f^s.g.u^t \ar[r]^-{1.\pi(g,\gamma)} &
u^s. \mathsf V(g,\gamma)}
\end{equation}
respectively, for any monads $(A,t)$ and $(B,s)$ in $\mathcal M$, and any vertical 1-cell $(g,\gamma):(A,t) \to (B,s)$ in $\Mnd(\mathcal M)$. 

However, we were not able to construct vertical transformations for the unit and counit of the adjunction $\mathbf I \dashv \mathbf K$, let alone double natural transformations.

(2) If in addition $\mathcal M$ is a strict monoidal 2-category which admits monoidal Eilenberg--Moore construction then $\mathbf I \dashv \mathbf K$ of part (1) is an adjunction in the 2-category of strict monoidal double categories, strict monoidal double functors and monoidal horizontal transformations.

(3) If, moreover, the strict monoidal 2-category $\mathcal M$ has a symmetry then $\mathbf I \dashv \mathbf K$ of part (2) is an adjunction in the 2-category of symmetric strict monoidal double categories, symmetric strict monoidal double functors and symmetric monoidal horizontal transformations.

(4) Recall that for any monad $(A,t)$ in a 2-category $\mathcal M$ admitting Eilenberg-Moore construction, the forgetful 1-cell $u^t:A^t \to A$ possesses a left adjoint $f^t$ in $\mathcal M$ (with unit and counit denoted by $\eta^t:1\to u^t.f^t$ and $\epsilon^t:f^t.u^t\to 1$, respectively). Extending this observation, for the double functors of part (1) there is a vertical transformation $1\to \mathbf I.\mathbf K$ with vertical 1-cell part and 2-cell part
\begin{equation} \label{eq:K_f}
\raisebox{17pt}{$\xymatrix@R=15pt{
(A,t) \ar[d]^-{(f^t,\epsilon^t.1)} \\
(A^t,1)}$}\qquad 
\xymatrix{
f^s.h \ar[r]^-{1.1.\eta^t} &
f^s.h.u^t.f^t \ar@{=}[r] &
f^s.u^s.\mathsf H (h,\chi).f^t \ar[r]^-{\epsilon^s.1.1} &
\mathsf H (h,\chi).f^t }
\end{equation}
respectively, for any monads $(A,t)$ and $(B,s)$ in $\mathcal M$, and any horizontal 1-cell $(h,\chi):(A,t) \to (B,s)$ in $\Mnd(\mathcal M)$. The transformations of \eqref{eq:K_counit} and \eqref{eq:K_f} turn out to be {\em orthogonal adjoints} in the sense of \cite{GrandisPare:dbladj} --- termed {\em conjoints} in \cite{DawsonPare Pronk} --- in the double category whose 0-cells are the double functors, horizontal and vertical 1-cells are the horizontal and vertical transformations, respectively, and whose 2-cells are the modifications in the sense of \cite[Section 1.6]{GrandisPare:dbladj}. By \cite[Section 1.3]{GrandisPare:dbladj} or \cite[page 314]{DawsonPare Pronk} this means the existence of modifications
$$
\xymatrix{
(A^t,1) \ar[r]^-{(u^t,1.\epsilon^t)} \ar@{=}[d] 
\ar@{}[rd]|-{\Longdownarrow {\epsilon^t}} &
(A,t) \ar[d]^-{(f^t,\epsilon^t.1)}  \\
(A^t,1) \ar@{=}[r] &
(A^t,1)} \qquad \qquad
\xymatrix{
(A,t) \ar@{=}[r] \ar[d]_-{(f^t,\epsilon^t.1)} 
\ar@{}[rd]|-{\Longdownarrow {\eta^t}} &
(A,t) \ar@{=}[d]  \\
(A^t,1) \ar[r]_-{(u^t,1.\epsilon^t)} &
(A,t)}
$$
satisfying the triangle conditions requiring that
$$
\xymatrix{
(A^t,1) \ar[r]^-{(u^t,1.\epsilon^t)} \ar@{=}[d] 
\ar@{}[rd]|-{\Longdownarrow {\epsilon^t}} &
(A,t) \ar@{=}[r] \ar[d]|(.45){(f^t,\epsilon^t.1)} 
\ar@{}[rd]|-{\Longdownarrow {\eta^t}} &
(A,t) \ar@{=}[d]  \\
(A^t,1) \ar@{=}[r] &
(A^t,1)\ar[r]_-{(u^t,1.\epsilon^t)} &
(A,t)}
\qquad \raisebox{-20pt}{$\textrm{and}$} \qquad
\xymatrix{
(A,t) \ar@{=}[r] \ar[d]_-{(f^t,\epsilon^t.1)} 
\ar@{}[rd]|-{\raisebox{28pt}{$\Longdownarrow {\eta^t}$}} &
(A,t) \ar@{=}[d]  \\
(A^t,1) \ar[r]^-{(u^t,1.\epsilon^t)} \ar@{=}[d] 
\ar@{}[rd]|-{\Longdownarrow {\epsilon^t}} &
(A,t)\ar[d]^-{(f^t,\epsilon^t.1)}  \\
(A^t,1) \ar@{=}[r] &
(A^t,1)}
$$
are identity 2-cells of $\mathcal M$. 
\end{remark}

In Section \ref{sec:Dpq} we described a 2-functor $\mathsf{Hor}$ from the 2-category of double categories, double functors and horizontal transformations to $\mathsf{2Cat}$ (it assigns to a double category its horizontal 2-category).  Applying it to the adjunction $\mathbf I \dashv \mathbf K$ of Remark \ref{rem:K_adjoint}~(1) we re-obtain the inclusion 2-functor $\mathcal M \to \mathsf{Mnd}(\mathcal M)$ and its right 2-adjoint $\mathsf H$ in Section \ref{sec:q-EM}; that is, the Eilenberg--Moore construction in $\mathcal M$.
However, since from the same 2-category of double categories, double functors, and horizontal transformations there seems to be no analogous 2-functor taking the vertical 2-category of a double category, Remark \ref{rem:K_adjoint} should have no message about the 2-functor $\mathsf V$ of Proposition \ref{prop:V} being a right 2-adjoint or not.

\begin{remark} \label{rem:EM_10}
Whenever a strict monoidal 2-category $\mathcal M$ admits monoidal Eilenberg--Moore construction, the 2-functor $\mathsf H$ of Section \ref{sec:q-EM} is right adjoint of the inclusion 2-functor $\mathsf J:\mathcal M \to \mathsf{Mnd}(\mathcal M)$ in the 2-category of strict monoidal 2-categories, strict monoidal 2-functors and monoidal natural transformations.
Then  we may apply to this adjunction the 2-functor $(-)_{01}$ in item (i) of the list in Example \ref{ex:Mpq}~(1) so to infer that  $\mathsf H_{01}$ is the right 2-adjoint of the inclusion 
$$
\xymatrix{
\mathcal M_{01} \ar[r]^-{\mathsf J_{01}} &
\mathsf{Mnd}(\mathcal M)_{01} \ar[r]^-\cong  &
\mathsf{Mnd}(\mathcal M_{01} ).}
$$
This shows that under the stated assumptions also $\mathcal M_{01}$ admits Eilenberg--Moore construction described by $\mathsf H_{01}$. This argument can be found in \cite[Theorem 5.1]{Zawadowski} for Cartesian monoidal 2-categories $\mathcal M$.

On the contrary, since the 2-functors $\mathsf{Mnd}$ and $(-)_{10}$ do not commute, no similarly simple argument seems available showing that under suitable circumstances also $\mathcal M_{10}$ admits Eilenberg--Moore construction. However, an application of the adjunction of Remark \ref{rem:K_adjoint}~(2) yields an easy proof. The so obtained result extends Theorem 4.1 of \cite{SzawielZawadowski}.
The author is grateful to an anonymous referee for indicating this question.
 
As in Remark \ref{rem:K_adjoint}~(2), consider a strict monoidal 2-category $\mathcal M$ which admits monoidal Eilenberg--Moore construction, and in which the coequalizer \eqref{eq:Linton} exists for all 1-cells in $\mathsf{Mnd}_{\mathsf {op}}(\mathcal M)$ and it is preserved by the horizontal composition on either side with any 1-cell in $\mathcal M$.

In Example \ref{ex:Dpq}~(2) we constructed a 2-functor $(-)_{10}$ from the 2-category of strict monoidal double categories, strict monoidal double functors and monoidal  horizontal transformations to the 2-category of double categories, double functors and horizontal transformations (see item (ii) of the list in Example \ref{ex:Dpq}~(1)). It takes the adjunction $\mathbf I \dashv \mathbf K$ of Remark \ref{rem:K_adjoint}~(2) to an adjunction $\mathbf I_{10} \dashv \mathbf K_{10}$ which induces a further adjunction
$$
\xymatrix{
\Mnd(\mathcal M_{10})\ar[r]^-\cong &
\Mnd(\mathcal M)_{10}
\ar@/^2pc/[r]^-{\mathbf K_{10}} \ar@{}[r]|-{\displaystyle\top} &
\Sqr(\mathcal M)_{10}
\ar@/^2pc/[l]^-{\mathbf I_{10}} \ar[r]^-\cong &
\Sqr(\mathcal M_{10})}
$$
in the 2-category of double categories, double functors and horizontal transformations.
Applying to it the 2-functor $\mathsf{Hor}$ to $\mathsf{2Cat}$ in Section \ref{sec:Dpq}, we obtain a right 2-adjoint of the inclusion 2-functor $\mathcal M_{10} \to \mathsf{Mnd}(\mathcal M_{10})$, proving that under the stated assumptions $\mathcal M_{10}$ admits Eilenberg--Moore construction. 

Even the explicit form of this right 2-adjoint can be read off this proof. Its action on a 0-cell; that is, the Eilenberg--Moore object of a monad 
$((A,m,u,\alpha,\lambda,\varrho),(t,\tau_2,$
$\tau_0),\mu,\eta)$
in $\mathcal M_{10}$ is equal to its image 
$(A^t=\mathsf V(A,t,\mu,\eta),$
$\mathsf V(m,\tau_2),\mathsf V(u,\tau_0),\mathsf V\alpha,\mathsf V\lambda,\mathsf V\varrho)$
under the 2-functor
$$
\xymatrix{
\mathsf{Mnd}_{\mathsf{op}}(\mathcal M_{10}) \ar[r]^-\cong &
\mathsf{Mnd}_{\mathsf{op}}(\mathcal M)_{10} \ar[r]^-{\mathsf V_{10}} &
\mathcal M_{10}.}
$$
A 1-cell $((h,\chi_2,\chi_0),\Xi)$ of $\mathsf{Mnd}(\mathcal M_{10})$ is sent to the 1-cell $(\mathbf K(h,\Xi)= \mathsf H(h,\Xi),\mathbf K \chi_2,\mathbf K \chi_0)$ of $\mathcal M_{10}$ and a 2-cell $\omega$ is taken to $\mathbf K \omega=\mathsf H \omega$. Thus it describes a lifting along the forgetful 1-cell $u^t:A^t\to A$.

If, furthermore, the strict monoidal 2-category $\mathcal M$ has a symmetry then, based on Remark \ref{rem:K_adjoint}~(3), the above arguments can be iterated to obtain Eilenberg-Moore construction in $\mathcal M_{pq}$ for any non-negative integers $p,q$.
\end{remark}


\bibliographystyle{plain}

\end{document}